\documentclass[11pt,twoside]{preprint}

\usepackage{hyperref}
\usepackage{breakurl}

\usepackage{times}
\usepackage{microtype}
\usepackage{amsmath}
\usepackage{amssymb}
\usepackage{tikz}
\usepackage{wasysym}
\usepackage{centernot}
\usepackage{color}

%
%
\usepackage{mathrsfs}
%
%
\usepackage{mhequ}
\usepackage{mhenvs}
\usepackage{mhsymb}
\usepackage[top=4cm, bottom=3.5cm, left=3.5cm, right=3.2cm]{geometry}

\colorlet{symbols}{blue!90!black}
\def\symbol#1{\textcolor{symbols}{#1}}
\def\1{\mathbf{\symbol{1}}}

\def\emptyset{\mathop{\centernot\ocircle}}

\colorlet{testcolor}{green!60!black}

\newtheorem{example}[lemma]{Example}


\definecolor{darkred}{rgb}{0.9,0.1,0.1}

\def\comment#1{\ifthenelse{\isodd{\value{page}}}{\marginpar{\raggedright\scriptsize{\textcolor{darkred}{#1}}}}{\marginpar{\raggedleft\scriptsize{\textcolor{darkred}{#1}}}}}  

\newop{diag}

\let\D\CD

\def\cE{\mathcal{E}}
\def\cF{\mathcal{F}}

\def\EE{{\mathscr E}}
\def\FF{{\mathscr F}}

\def\ss{{\mathtt{s}}}
\def\s{{\mathrm{s}}}
\def\e{{\mathrm{e}}}

\def\i{{\mathrm{i}}}
\def\L{{\mathcal{L}}}

\def\limsup{\mathop{\overline{\mathrm{lim}}}}
\def\liminf{\mathop{\underline{\mathrm{lim}}}}

\def\${|\!|\!|}
\def\l|{\left|\!\left|\!\left|}
\def\r|{\right|\!\right|\!\right|}

\begin{document}

\title{On stiff problems via Dirichlet forms}
\author{Liping Li$^1$, Wenjie Sun$^2$}
\institute{RCSDS, HCMS, Academy of Mathematics and Systems Science, Chinese Academy of Sciences, \email{liliping@amss.ac.cn} \and Fudan University, \email{wjsun14@fudan.edu.cn}}

\maketitle

\begin{abstract}
The stiff problem is concerned with a thermal conduction model with a singular barrier of zero volume. In this paper, we shall build the phase transitions for the stiff problems in one-dimensional space. It turns out that every phase transition definitely depends on the total thermal resistance of the barrier, and the three phases correspond to the so-called impermeable pattern, semi-permeable pattern and permeable pattern of thermal conduction respectively. For each pattern, the related boundary condition of the flux at the barrier is also derived. Mathematically, we shall introduce and explore the so-called snapping out Markov process, which is the probabilistic counterpart of semi-permeable pattern in the stiff problem. 
\footnote{\textbf{Keywords}: Stiff problems, Phase transitions, Dirichlet forms, Mosco convergences, Snapping out Markov processes.$\quad$ \textbf{MSC2010}: 31C25, 60J25, 60J45, 60J50.}
\end{abstract}

\tableofcontents

\section{Introduction}

The stiff problem (Cf. \cite{SanchezPalencia:1980dg}) is concerned with a thermal conduction model with a singular barrier. In \cite{L16}, the terminology `thin layer problem' was used instead.  Let us explain it by a concrete example in one-dimensional space. Given a small constant $\varepsilon >0$, consider the following heat equation:
\begin{equation}\label{EQ3PTU}
	\partial_t u^\varepsilon(t,x)=\frac{1}{2}\nabla \left(a_\varepsilon(x)\nabla u^\varepsilon(t,x) \right),\quad t\geq 0, x\in \mathbb{R}
\end{equation}
with the initial condition $u^\varepsilon(0,\cdot)=u_0$. Note that $a_\varepsilon$ is the so-called  thermal conductivity (or diffusive coefficient). A small normal barrier $I_\varepsilon$ is put near $0$ in the sense that $a_\varepsilon$ is very small in $I_\varepsilon$. In \cite{L16}, it is further assumed that $a_\varepsilon$ is constant either in or out of  $I_\varepsilon$, and the small thermal conductivity in $I_\varepsilon$ has the same scale as the length of  $I_\varepsilon$. More precisely, 
\begin{equation}\label{EQ1AXA}
I_\varepsilon:=(-\varepsilon, \varepsilon),\quad a_\varepsilon(x):=\left\{\begin{aligned} 
1,\quad x\notin (-\varepsilon, \varepsilon),\\
\kappa \varepsilon,\quad x\in (-\varepsilon,\varepsilon),
\end{aligned} \right.
\end{equation}
for a fixed constant $\kappa>0$ as in \cite{L16}. 
Then the limit of $u^\varepsilon$ is expected in the stiff problem as $\varepsilon\downarrow 0$. 
Heuristically speaking, the singular barrier (at $0$) is thought of as a material of zero length and zero thermal conductivity in this thermal conduction. One can prove that $u^\varepsilon$ converges to a function $u$ satisfying
\begin{equation}\label{EQ3TVT}
	\partial_tu(t,x)=\frac{1}{2}\Delta u(t,x),\quad u(0,x)=u_0(x)
\end{equation}
and the discontinuity of the flux at $0$: 
\begin{equation}\label{EQ3VTV}
\nabla u(t,0+)=\nabla u(t,0-)=\frac{\kappa}{2}(u(t,0+)-u(t,0-))
\end{equation}
in a certain meaning ($u$ is also called the \emph{flux}).

On the other hand, to our knowledge, it was  Lejay, who first studied the probabilistic description of this stiff problem in \cite{L16}. For any fixed $\varepsilon>0$, it is well known that \eqref{EQ3PTU} with $a_\varepsilon$ in \eqref{EQ1AXA} has an associated diffusion process $(X^\varepsilon_t)_{t\geq 0}$ on $\mathbb{R}$ such that 
\[
	u^\varepsilon(t,x)=\mathbf{E}_x u_0(X^\varepsilon_t).
\]
Needless to say, it is surely interesting to ask whether $X^\varepsilon$ could converge to some process as $\varepsilon\downarrow 0$, and if the limit exists, how it links the heat equation \eqref{EQ3TVT} and the boundary condition \eqref{EQ3VTV}. As we have known, the snapping out Brownian motion (SNOB in abbreviation) introduced in \cite{L16} is the desired limit. It is a Feller process on $\mathbb{G}:=(-\infty, 0-]\cup [0+,\infty)$, in which $0\in \mathbb{R}$ corresponds to two distinct points. Roughly speaking, the SNOB denoted by $(Y_t)_{t\geq 0}$ behaves like a reflecting Brownian motion on $\mathbb{G}_-:=(-\infty, 0-]$ or $\mathbb{G}_+:=[0+,\infty)$ and may change its sign and start as a new reflecting Brownian motion on the other component of $\mathbb{G}$ by chance, when it hits $0+$ or $0-$.  Lejay believed that $X^\varepsilon$ converges to the SNOB and 
\[
	u(t,x)=\mathbf{E}_x u_0(Y_t)
\]
satisfies \eqref{EQ3TVT} and \eqref{EQ3VTV} in some sense. In practice, he proved that the resolvent of SNOB satisfies the boundary condition \eqref{EQ3VTV} and another process $Z^\varepsilon$, a censored version of $X^\varepsilon$ obtained by a special transform, converges to the SNOB as $\varepsilon\downarrow 0$. 

The main purpose of this paper is to explore the general stiff problems and their probabilistic counterparts by means of Dirichlet forms. Let us introduce the background of Dirichlet forms upfront. A Dirichlet form is a symmetric Markovian closed form on $L^2(E,m)$ space, where $E$ is a nice topological space and $m$ is a Radon measure on it. Theory of Dirichlet form is closely related to the probability theory because of its Markovian property. Due to a series of important works by Fukushima, Silverstein in 1970's and Albeverio, Ma and R\"ockner in 1990's, it is now well known that a regular (resp. quasi-regular) Dirichlet form is always associated with a symmetric Markov process. We refer the notions and terminologies in theory of Dirichlet form to \cite{CF12, FOT11}. 

As mentioned above,  Lejay only considered the Brownian case of stiff problems, in which the conductivity is constant out of the barrier. 
His approach to the SNOB is based on the resolvent analysis of elastic Brownian motion, which is a perturbation of two-sided reflecting Brownian motion on $\mathbb{G}$, and the SNOB is eventually obtained by applying the piecing out transform (Cf. \cite{INW66}) to the elastic Brownian motion. Though the idea is heuristic, this approach is a little cumbersome and hard to generalize. Approach of Dirichlet form proposed by us is another possible way to obtain the SNOB. As we know, Dirichlet form is a very powerful tool to deal with the general Markov process and its related  probabilistic notions. For example, the perturbation in elastic Brownian motion is a special case of so-called killing transform for a general Markov process, and in theory of Dirichlet form, the killing transform is described by the perturbed Dirichlet form illustrated in \S\ref{SEC21}. Moreover, by an argument of resolvent analysis on $L^2(E,m)$, we can also derive the Dirichlet form of piecing out method in Theorem~\ref{THM35}. Particularly, the SNOB is associated with a regular Dirichlet form on $L^2(\mathbb{G},m)$ as follows:
\[
\begin{aligned}
 & \FF^\mathrm{s}=\left\{u\in L^2(\mathbb{G},m): 
 u_+\in H^1\left([0+,\infty)\right), ~u_-\in H^1\left((-\infty, 0-]\right)\right\} \\
 &\EE^\mathrm{s}(u,v)=
 \frac{1}{2}\int_\mathbb{G} u'(x)v'(x)dx+\dfrac{\kappa}{4}(u(0+)-u(0-))(v(0+)-v(0-)),\quad u,v\in \FF^\mathrm{s},
 \end{aligned}
\]
where $m$ is the Lebesgue measure on $\mathbb{G}$ and $u_+:=u|_{[0+,\infty)}, u_-:=u|_{(-\infty, 0-]}$. This indicates that the switches of SNOB at $0$ are essentially the additional jumps between $0+$ and $0-$. After the generator of SNOB on $L^2(E,m)$ is put forward in Proposition~\ref{PRO45}, the relation between SNOB and \eqref{EQ3VTV} also becomes clear, since $u_t(x)=\mathbf{E}_x u_0(Y_t)$ is a continuous function (on $\mathbb{G}$) belonging to $\FF^\s$ for $t>0$. The arguments based on Dirichlet forms are valid not only for the Brownian case, but also for a rich class of thermal conduction models. In practice, we shall characterize the associated Markov process and related boundary condition of the flux at $0$ for the stiff problem with a lower and upper bounded conductivity in \S\ref{SEC44}. 

The extension of SNOB is a reason to start this paper, but it is not the most important reason. In the Brownian case, the form of the conductivity in \eqref{EQ1AXA} is a little incomprehensible from Lejay's approach. Primarily, it is not easy to find a sensible physical interpretation of the assumption that $a_\varepsilon$ has the same scale as $\varepsilon$ in $I_\varepsilon$. Approach of Dirichlet form could shed light on the essence of this assumption, and this is the principal reason that initiates this article. To show this, let us use a few lines to summerize the characterization of one-dimenisonal diffusions. It is well known that under a `regularity' condition, a diffusion on $\mathbb{R}$  with no killing inside could be characterized essentially by a function $\ss$, called scale function and a measure $m$, called speed measure (Cf. \cite{IM74}). In this case the speed measure is also the unique symmetric measure. Note that the scale function is a continuous and strictly increasing function and induces a fully supported positive Radon measure $\lambda$ on $\mathbb{R}$.  It is performed in \cite{FHY10, F14, LY172} that the Dirichlet form (on $L^2(\mathbb{R},m)$) of this diffusion is completely characterized by $\lambda$ (as well as $\ss$) as follows (see \eqref{EQ4FIF})
\begin{equation}\label{EQ1FFL}
\begin{aligned}
&\FF^\i=\bigg\{f\in L^2(\mathbb{R}, m): f\ll \lambda, \int_{\mathbb{R}} \left(\frac{df}{d\lambda}\right)^2d\lambda<\infty, \\
&\qquad\qquad\qquad\qquad \qquad f(\pm\infty):=\lim_{x\rightarrow \pm\infty}f(x)=0 \text{ if }\lambda_\pm(\mathbb{G}_\pm)<\infty\bigg\}, \\
&\EE^\i(f,g)=\frac{1}{2}\int_{\mathbb{R}}\frac{df}{d\lambda}\frac{dg}{d\lambda}d\lambda,\quad f,g\in \FF^\i. 
\end{aligned}
\end{equation}
As Dirichlet form stands for the energy of associated generator, $\lambda$ plays the role of the `thermal resistance', which reflects the ability of the material to resist the flow of the heat (see Remark~\ref{RM43}). Thereupon, the general stiff problem in one-dimensional space can be reintroduced in the manner of thermal resistance as follows. Recall $I_\varepsilon=(-\varepsilon,\varepsilon)$ and declare $\gamma_\varepsilon$ to be a finite Radon measure on $I_\varepsilon$ with full support charging no set of singleton, i.e. $\gamma_\varepsilon(\{x\})=0$ for any $x\in I_\varepsilon$. Another measure $\lambda_\varepsilon$ is, by definition, equal to $\gamma_\varepsilon$ on $I_\varepsilon$ and equal to $\lambda$ outside $I_\varepsilon$ (see \eqref{EQ4LVT}). The diffusion $X^\varepsilon$ with scale function induced by $\lambda_\varepsilon$ corresponds to a thermal conduction model with the small barrier $(I_\varepsilon, \gamma_\varepsilon)$. Then the stiff problem is concerned with the convergence of $X^\varepsilon$ as well as the related flux as $\varepsilon\downarrow 0$. The following heuristic observation gives insight to this stiff problem:
\[
	\lambda_\varepsilon \rightarrow \lambda+\bar{\gamma}\cdot \delta_0,\quad \text{as } \varepsilon\downarrow 0,
\]
where $\bar{\gamma}:=\lim_{\varepsilon\downarrow 0}\gamma_\varepsilon(I_\varepsilon)$ is called the total thermal resistance of the singular barrier (Figure~\ref{FIG1} is an illustration of this observation, in which $\lambda_\pm:=\lambda|_{\mathbb{G}_\pm}$). This indicates that $\bar{\gamma}$ should play a critical role (notice that $\lambda+\bar{\gamma}\cdot\delta_0$ cannot induce a scale function if $\bar{\gamma}>0$). 
Indeed, we shall build a phase transition in terms of $\bar{\gamma}$ for this stiff problem in Theorem~\ref{THM45}: 
\begin{itemize}
\item[(1)]  $\bar{\gamma}=\infty$: The flow cannot cross the singular barrier and the conduction is divided into two separate parts. Mathematically, $X^\varepsilon$ converges to a non-irreducible diffusion, namely a union of two separate reflecting diffusions on $[0+,\infty)$ and $(-\infty, 0-]$ respectively. 
\item[(2)] $0<\bar{\gamma}<\infty$: This is the most interesting case. The flow could penetrate the singular barrier partially, and in the probabilistic counterpart, penetrations are realized by additional jumps between $0+$ and $0-$.  
\item[(3)] $\bar{\gamma}=0$: The barrier makes no sense and $X^\varepsilon$ converges to the diffusion associated with \eqref{EQ1FFL}. 
\end{itemize}
We call the three patterns of thermal conduction above the \emph{impermeable pattern} for $\bar{\gamma}=\infty$, \emph{semi-permeable pattern} for $0<\bar{\gamma}<\infty$ and \emph{permeable pattern} for $\bar{\gamma}=0$ respectively. Particularly, the Brownian case with the conductivity \eqref{EQ1AXA} is such that $m=\lambda$ is the Lebesgue measure and 
\[
	\gamma_\varepsilon(dx)=\frac{1}{a_\varepsilon(x)}dx=(\kappa\varepsilon)^{-1}dx.
\]
As a consequence, $\bar{\gamma}=2/\kappa$ and the parameter $\kappa$ is nothing but the reciprocal of total thermal resistance.

\begin{figure}
\centering
\includegraphics[scale=0.85]{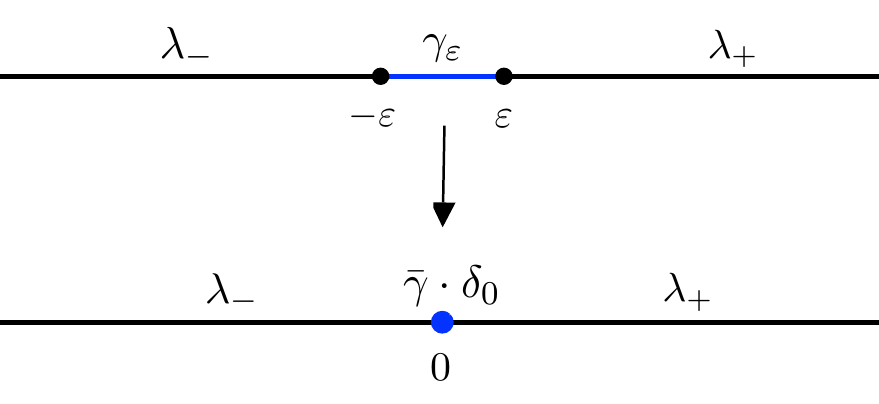}
\caption{Stiff problem in the manner of thermal resistance.}\label{FIG1}
\end{figure}


This paper is organized as follows. In \S\ref{SEC2}, we shall review several transforms of Markov processes and their counterparts in theory of Dirichlet form for later use.  In \S\ref{SEC3}, we shall extend the SNOB to the so-called snapping out Markov process on a general state space. This notion is the probabilistic counterpart of semi-permeable pattern in stiff problem. It is, by definition, a Markov process obtained by transforms of killing and piecing out with respect to the same finite measure. We shall derive the Dirichlet form of snapping out Markov process in Theorem~\ref{THM35} and explore its properties. Particularly, rich facts about SNOB are presented in Proposition~\ref{PRO310}. 
In \S\ref{SEC34}, several other examples of snapping out Markov processes are raised. 

The section \S\ref{SEC4} is devoted to the general stiff problems in one-dimensional space. As said above, the phase transitions are built in Theorem~\ref{THM45}. The convergences of Dirichlet forms in this theorem are in the sense of Mosco. Mosco convergence is reviewed in \S\ref{SEC41} and particularly, it implies the convergence of associated Markov processes in the sense of finite dimensional distributions as stated in Corollary~\ref{COR49}. We shall derive the generators of three Markov processes related to every phase transition in Proposition~\ref{PRO45}. The characterizations of their generators play important roles in studying the boundary conditions of flux at $0$ in the three patterns of thermal conduction. In \S\ref{SEC44}, we find that  the flux is continuous at $0$ in permeable pattern and satisfies the boundary conditions \eqref{EQ4UUO} and \eqref{EQ4AUA} at $0$ in impermeable and semi-permeable patterns respectively. 

\subsection*{Notations}
Let us put some often used notations here for handy reference, though we may restate their definitions when they appear.

Given a topological space $E$, $\mathcal{B}(E), \mathcal{B}_b(E), C(E), C_b(E)$ and $C_c(E)$ are families of all the Borel measurable functions, all the bounded Borel measurable functions, all the continuous functions, all the bounded continuous functions and all the continuous functions with compact supports on $E$ respectively. For an interval $I$,  the classes $C_c(I), C^1_c(I)$ and $C^\infty_c(I)$ denote the spaces of all the continuous functions with compact supports, all the continuously differentiable functions with compact supports and all the infinitely differentiable functions with compact supports on $I$ respectively.

The notation `$:=$' is read as `to be defined as'. For two functions $f,g$ and a measure $\mu$ on $E$, $(f,g)_{\mu}:=\int_E fgd\mu$ and $\langle f,\mu\rangle:=\int_E fd\mu$.
Notation $dx$ stands for the Lebesgue measure on $\mathbb{R}$ or an interval throughout the paper. For $x\in\mathbb{R}^d$, $|x|$ is the Euclidean norm of $x$.  
The restrictions of a measure $\mu$ and a function $f$ to $I$ are denoted by $\mu|_I$ and $f|_I$ respectively. Given two measures $\mu$ and $\nu$, $\mu\ll \nu$ means $\mu$ is absolutely continuous with respect to $\nu$. 
 
Given a scale function $\ss$, namely a continuous and strictly increasing function, on $I$, $d\ss$ represents its induced Lebesgue-Stieltjes measure on $I$. We also use $\lambda$ for $d\ss$. For a function $f$ on $I$, $f\ll \ss$ (or $f\ll \lambda$) means $f=g\circ \ss$ for some absolutely continuous function $g$ and $\frac{df}{d\ss}=\frac{df}{d\lambda}:=g'\circ \ss$. 

For any function $u$ (resp. a measure $\nu$) on $\mathbb{G}=\mathbb{G}_-\cup \mathbb{G}_+=(-\infty, 0-]\cup [0+,\infty)$, $u_+:=u|_{\mathbb{G}_+}$ and $u_-:=u|_{\mathbb{G}_-}$ (resp. $\nu_+:=\nu|_{\mathbb{G}_+}$ and $\nu_-:=\nu|_{\mathbb{G}_-}$). The subscript `$\pm$' is read as `$+$ and $-$'. For example, $u_\pm\ll \nu_\pm$ means $u_+\ll \nu_+$ and $u_-\ll \nu_-$. 

\section{Transforms of Markov processes}\label{SEC2}

In this section we shall review several transforms of Markov processes, which will be frequently used in the subsequent sections. 
Let $E$ be a locally compact separable metric space and $m$ a positive Radon measure fully supported on $E$. The one-point compactification of $E$ is written as $E_\Delta:=E\cup\Delta$ (if $E$ is compact then $\Delta$ is attached as an isolated point). Further let $(\EE,\FF)$ be a regular Dirichlet form on $L^2(E,m)$ associated with an $m$-symmetric Markov process $X=\left(\Omega,\mathcal{F},X_t,\mathcal{F}_t,\theta_t, \zeta, (\mathbf{P}_x)_{x\in E_\Delta}\right)$ on $E$. The extended Dirichlet space of $(\EE,\FF)$ is denoted by $\FF_\mathrm{e}$. Every function in a Dirichlet space will be taken to be its quasi-continuous version for convenience. All the terminologies above are standard, and we refer them to \cite{CF12, FOT11}.

\subsection{Killing transform}\label{SEC21}

The first transform is called the killing transform.  It kills the trajectories according to a given tactic and attains a new Markov process. The concrete description is referred to \cite[Chapter III]{BG68}. In the following, we shall present its counterpart in theory of Dirichlet form. 

Let $\mu$ be a smooth Radon measure with respect to $(\EE, \FF)$, which means $\mu$  charges no $\EE$-polar set. The perturbed Dirichlet form by $\mu$ is given by (Cf. \cite[\S6.1]{FOT11})
\begin{equation}\label{DFE}
\begin{aligned}
  \FF^\mu &=\FF\cap L^2(E,\mu),\\
 \EE^\mu(f,g)&=\EE(f,g)+\int_E fgd\mu,\quad f,g\in \FF^\mu.
 \end{aligned}
\end{equation}
It is also a regular Dirichlet form on $L^2(E, m)$ in the light of \cite[Theorem~5.1.6]{CF12}. 

The associated Markov process of $(\EE^\mu,\FF^\mu)$, denoted by $X^\mu=(X^\mu_t)_{t\geq 0}$, is nothing but the subprocess of $X$ induced by a multiplicative functional $\left(\mathrm{e}^{-A_t}\right)_{t\geq 0}$ (Cf. \cite{BG68}), where $(A_t)_{t\geq 0}$ is the positive continuous additive functional (PCAF in abbreviation) of $\mu$ in the Revuz correspondence. 
Roughly speaking, the trajectories of $X^\mu$ are realized from those of $X$ by killing at some rates depending on $\mu$.  
Particularly, the semigroup $P^\mu_t$ of $X^\mu$ can be written as
\[
	P_t^\mu f(x)=\mathbf{E}_x\left[ \mathrm{e}^{-A_t}f(X_t) \right]
\]
for any positive function $f$. 

\subsection{Time change}\label{TRM}

The second transform is the time change. Take a PCAF $(A_t)_{t\geq 0}$ of $X$ with $\mu$ being its Revuz measure. Denote the quasi support (Cf. \cite{CF12}) of $\mu$ by $F$. The right continuous inverse $\tau_t$ of $A_t$ is defined by
\[
	\tau_t(\omega):=
	\begin{cases}
		\inf\{s: A_s(\omega>t\},\quad &\text{if }t<A_{\zeta(\omega)-}(\omega),\\
		\infty,\quad &\text{if }t\geq A_{\zeta(\omega)-}(\omega). 
	\end{cases}	
\]
Set
\[
\check{X}_t(\omega):=X_{\tau_t(\omega)}(\omega),\quad \check{\zeta}(\omega):=A_{\zeta(\omega)-}(\omega). 
\]
Then $\check{X}=(\check{X}_t, \check{\zeta}, (\mathbf{P}_x)_{x\in F_\Delta})$ is a right process on $F$ and called the \emph{time-changed process} of $X$ by the PCAF $A$ or speed measure $\mu$.  

The counterpart of time-changed process in theory of Dirichlet form is the so-called \emph{trace Dirichlet form}. Its idea goes back to Douglas \cite{D31} from an analytic viewpoint, and Chen et al. studied the traces of general symmetric Dirichlet forms in \cite{CFY06}. In fact, the time-changed process $\check{X}$ is a $\mu$-symmetric Markov process on $F$. Its associated Dirichlet form on $L^2(F,\mu)$ is actually the trace Dirichlet form of $(\EE,\FF)$ on $F$ and given by
\begin{equation}\label{EQ2FFE}
\begin{aligned}
	\check{\FF}&=\FF_\mathrm{e}|_F \cap L^2(F,\mu), \\
	\check{\EE}(u|_F,v|_F)&=\EE(\mathbf{H}_Fu,\mathbf{H}_Fv),\quad \forall u,v \in \FF_\mathrm{e}|_F, 
\end{aligned}
\end{equation}
where $\mathbf{H}_Fu(x):=\mathbf{E}_x[u(X_{\sigma_F}), \sigma_F<\infty]$ and $\sigma_F:=\inf\{t>0: X_t\in F\}$ is the hitting time of $F$. If $\mu$ is Radon, then $(\check{\EE},\check{\FF})$ is regular. We refer further considerations of time-changed processes and trace Dirichlet forms to \cite{CF12, CFY06}.

\subsection{Darning}\label{D}

The transform of darning was first performed in \cite{CF08} to study the one-point extensions of Markov process.   
Following \cite{CP17}, let $K_1, K_2,\dots, K_n$ be disjoint compact subsets of $E$ with positive capacity. Denote $D=E\setminus\cup^{n}_{i=1}K_i$, and short each $K_i$ into a single point $a^*_i$. Set a measure $m^*$ on $E^*:=D\cup\{a^*_1, a^*_2, \dots, a^*_n\}$ by letting $m^*=m$ on $D$ and $m^*(\{a^*_1, a^*_2, \dots, a^*_n\})=0$. The \emph{Markov process with darning} induced by $X$ is a strong Markov process $X^*$ on $E^*$ such that
\begin{itemize}
\item[(1)] the part process of $X^*$ in $D$ has the same law as the part process of $X$ in $D$;
\item[(2)] the jumping measure and killing measure of $X^*$ have the property inherited from $X$ without additional jumps or killings. 
\end{itemize}   
It is shown in \cite{CP17} that such a process exists and is unique in law, and its Dirichlet form $(\EE^*,\FF^*)$ is given by 
\begin{equation}\label{DR}
 \begin{aligned}
 & \FF^*=\left\{f^*: f\in \FF, f \mbox{ is constant $\EE$-q.e. on each $K_j$}\right\},\\
 & \EE^*(f^*,g^*)=\EE(f,g),\quad f^*,g^*\in \FF^*,
 \end{aligned}
\end{equation}
where $f^*(x):=f(x)$ for $x\in D$ and $f^*(a_i^*):=f(y)$ with $y\in K_i$. 
Moreover, $(\EE^*,\FF^*)$ is a regular Dirichlet form on $L^2(E^*,m^*)$ by \cite[Theorem~3.3]{CP17}. 

\subsection{Piecing out}
Piecing out transform raised by Ikeda et al. in \cite{INW66} is, in some sense, an inverse transform of killing. As in \cite{INW66}, let $W:=\Omega \times E$ with $\mathcal{B}(W):=\mathcal{F}\otimes \mathcal{B}(E)$ and for any $\mathtt{w}=(\omega, y)\in W$, set
\begin{equation}\label{EQ2XTW}
\dot{X}_t(\mathtt{w}):=\left\lbrace
\begin{aligned}
	X_t(\omega),\quad t<\zeta(\omega), \\
	y,\quad\quad t\geq \zeta(\omega). 
\end{aligned}\right.
\end{equation}
Take an appropriate kernel $\nu(\omega, dy)$ on $\Omega\times E_\Delta$ with $\nu(\omega, \cdot)$ being a probability measure on $E_\Delta$, and for each $x\in E_\Delta$ put a probability measure $\mathbf{Q}_x(d\mathtt{w}):=\mathbf{P}_x(d\omega)\nu(\omega, dy)$ on $W$. Further let $(\tilde{\Omega}, \tilde{\mathcal{F}})$ be the product of an infinite, countable copies of $(W,\mathcal{B}(W))$. Clearly, there exists a unique probability measure $\tilde{\mathbf{P}}_x$ on $(\tilde{\Omega}, \tilde{\mathcal{F}})$ such that
\[
\tilde{\mathbf{P}}_x[ d\mathtt{w}_1,\cdots,d\mathtt{w}_n]
 =\mathbf{Q}_x[d\mathtt{w}_1]\mathbf{Q}_{y_1}[d\mathtt{w}_2]\cdots\mathbf{Q}_{y_{n-1}}[d\mathtt{w}_n],
\]
where $\mathtt{w}_i=(\omega_i, y_i)$ for $1\leq i\leq n$. Define a new trajectory for $\tilde{\mathtt{w}}=(\mathtt{w}_1,\cdots, \mathtt{w}_n,\cdots)\in \tilde{\Omega}$ as follows:
\[
\tilde{X}_t(\tilde{\mathtt w})= \begin{cases}
 \dot{X}_t(\mathtt{w}_1), &  \mbox{if}~0\leq t\leq \zeta(\omega_1), \\
 \cdots & \\
 \dot{X}_{t-(\zeta(\omega_1)+\ldots+\zeta(\omega_n))}(\mathtt{w}_{n+1}), & \mbox{if}~\sum\limits_{i=1}^n\zeta(\omega_i)<t\leq \sum\limits_{i=1}^{n+1}\zeta(\omega_i),\\
 \cdots & \\
 \Delta & \mbox{if}~t\geq\tilde{\zeta}(\tilde{\mathtt{w}}):= \sum\limits_{i=1}^{N(\tilde{\mathtt{W}})}\zeta(\omega_{i}),
 \end{cases} 
\]
where $N(\tilde{\mathtt{w}})=\inf\{i:\zeta(\omega_i)=0\}$ with $\inf\emptyset:=\infty$.
After defining the shift operators $\tilde{\theta}_t$ and filtration $\tilde{\mathcal{F}}_t$ on $\tilde{\Omega}$ accordingly, the principal result of \cite{INW66} tells us 
\begin{equation}\label{EQ2XOF}
	\tilde{X}=\left(\tilde{\Omega},\tilde{\mathcal{F}},\tilde{X}_t,\tilde{\mathcal{F}}_t,\tilde{\theta}_t, \tilde{\zeta}, (\tilde{\mathbf{P}}_x)_{x\in E_\Delta}\right)
\end{equation}
is a right continuous Markov process on $E_\Delta$ with $\tilde{\mathbf{P}}_{\Delta}[\tilde{X}_t=\Delta, \forall t\geq 0]=1$. Intuitively speaking, $\tilde{X}$ is realized by  resurrection after the death of $X$, and more precisely, it takes a random reborn site  according to $\nu$ and continues the motion along a new trajectory of $X$ starting from this reborn site until the next death. 
The kernel $\nu$ is called the \emph{instantaneous distribution} of piecing out transform in \cite{INW66}. In this paper, we shall take a special form of instantaneous distribution as follows.
\begin{definition}
Let
\begin{equation}\label{EQ2NDY}
\nu(\omega, dy):=\left\lbrace 
	\begin{aligned}
	\nu^{\#}(dy),\;\;\quad &\text{for }X_{\zeta(\omega)-}(\omega)\in E,\\
	\delta_{\{\Delta\}}(dy),\quad &\text{for } X_{\zeta(\omega)-}(\omega)=\Delta
	\end{aligned}
	\right.
\end{equation}
with some probability measure $\nu^{\#}$ on $E$. In abuse of terminology, we call \eqref{EQ2XOF} the \emph{piecing out process} with instantaneous distribution $\nu^{\#}$ induced by $X$.
\end{definition}

The choice of $\nu$ in \eqref{EQ2NDY} indicates that the left limit $\tilde{X}_{t-}$ exists in $E$ for any $t<\tilde{\zeta}$. This is necessary for $\tilde{X}$ to be a Hunt process. Furthermore, we can conclude the following lemma by \cite{INW66}.

\begin{lemma}\label{LM21}
Let $\tau(\tilde{\mathtt{w}}):= \zeta(\omega_1)$ for $\tilde{\mathtt w}=(\mathtt{w}_1,\mathtt{w}_2,\cdots)\in \tilde{\Omega}$ and $\mathtt{w}_i=(\omega_i,y_i)$. Then $\tau$ is an $\tilde{\mathcal{F}}_t$-stopping time. 
\end{lemma}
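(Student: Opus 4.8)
The plan is to verify directly that $\{\tau\le t\}\in\tilde{\mathcal F}_t$ for every $t\ge 0$, working with the structure of the piecing out space $\tilde\Omega$ and of the filtration $\tilde{\mathcal F}_t$ attached to it. The guiding observation is that $\tau=\zeta(\omega_1)$ is precisely the first junction (rebirth) time of the construction. To make this explicit I would write $T_n(\tilde{\mathtt w}):=\sum_{i=1}^n\zeta(\omega_i)$ for the successive junction times (so $\tau=T_1$) and $N_t(\tilde{\mathtt w}):=\max\{n\ge 0:T_n\le t\}$, with $T_0:=0$, for the number of pieces completed by real time $t$. With this notation $\{\tau\le t\}=\{\zeta(\omega_1)\le t\}=\{T_1\le t\}=\{N_t\ge 1\}$, while its complement $\{\tau>t\}=\{N_t=0\}$ is exactly the event that real time $t$ still lies inside the first piece, in which case $\tilde X_s=\dot X_s(\mathtt w_1)=X_s(\omega_1)$ for all $s\le t$.

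First I would record the two standard inputs. On the original sample space $\Omega$ the lifetime $\zeta$ is an $(\mathcal F_t)$-stopping time, since $X$ is a right process; hence $\{\zeta>t\}\in\mathcal F_t$. Next I would spell out the filtration $\tilde{\mathcal F}_t$ coming from \cite{INW66}: it is constructed on the product $\tilde\Omega$ so that, on the event $\{N_t=n\}$, it is generated by the full data of the first $n$ pieces together with their rebirth sites $y_1,\dots,y_n$ and by the partial data of the $(n+1)$-st piece run for the elapsed time $t-T_n$. In particular $N_t$ is $\tilde{\mathcal F}_t$-measurable, and on $\{N_t=0\}$ the elapsed time in the first piece coincides with real time $t$, so $\tilde{\mathcal F}_t$ there contains the pullback $\pi_1^{-1}(\mathcal F_t)$ under the first-coordinate projection $\pi_1(\tilde{\mathtt w}):=\omega_1$.

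The heart of the argument is then a one-line matching. Since $X$ still lives at time $t$ exactly when the first piece has not yet ended, we have $\{\tau>t\}=\{\zeta(\omega_1)>t\}=\pi_1^{-1}\{\zeta>t\}=\{N_t=0\}$, and this event is $\tilde{\mathcal F}_t$-measurable by the previous paragraph, either because $N_t$ is adapted or because $\{\zeta>t\}\in\mathcal F_t$ pulls back into $\tilde{\mathcal F}_t$ on $\{N_t=0\}$. Taking complements yields $\{\tau\le t\}\in\tilde{\mathcal F}_t$, which is the claim; equivalently one may write $\{\tau\le t\}=\{N_t\ge 1\}=\bigcup_{n\ge 1}\{N_t=n\}$ as a countable union of $\tilde{\mathcal F}_t$-measurable events.

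I expect the only real obstacle to be bookkeeping rather than anything conceptual: one must pin down the definition of $\tilde{\mathcal F}_t$ on the infinite product $\tilde\Omega$ precisely enough to justify that ``no rebirth before $t$'' is $\tilde{\mathcal F}_t$-measurable and that, on that event, the first-piece clock equals real time. Once $\tilde{\mathcal F}_t$ is described as above, exactly as in \cite{INW66}, the stopping-time property of $\tau$ reduces to the stopping-time property of the lifetime $\zeta$ of $X$ and uses no information about the instantaneous distribution $\nu$ beyond the fact that $\tau=T_1=\zeta(\omega_1)$ depends only on the first coordinate $\mathtt w_1$.
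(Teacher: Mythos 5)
Your argument is correct. Note that the paper itself offers no proof of Lemma~\ref{LM21}: it is stated as a consequence of the construction in \cite{INW66}, with the filtration $\tilde{\mathcal{F}}_t$ only defined ``accordingly.'' Your direct verification is exactly what that citation is standing in for: you identify $\{\tau>t\}$ with the event $\{N_t=0\}$ that no rebirth has occurred by real time $t$, observe that on this event the first-piece clock equals real time so that $\{\tau>t\}=\pi_1^{-1}\{\zeta>t\}$ with $\{\zeta>t\}\in\mathcal{F}_t$, and conclude by taking complements. The one point that genuinely requires care is the one you flag yourself: the measurability of $N_t$ (equivalently, of the event ``the first piece has ended by time $t$'') with respect to $\tilde{\mathcal{F}}_t$ is not automatic if $\tilde{\mathcal{F}}_t$ were merely the natural filtration of the path $\tilde{X}$ --- a rebirth at the death site could be invisible to the path alone --- but it does hold for the filtration constructed on the product space $\tilde{\Omega}$ in \cite{INW66}, which carries the piece structure explicitly. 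Since that is the filtration the paper intends, your proof is complete and supplies the detail the paper leaves implicit.
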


\section{Snapping out Markov processes}\label{SEC3}

Lejay raised a model which he called a \emph{snapping out Brownian motion} (abbrviated in SNOB) in \cite{L16}. It was introduced for the probabilistic description of a stiff problem in one-dimensional space. In this section, we shall first recall the main ideas of this model, and then extend this notion to the so-called \emph{snapping out Markov process} on a general state space. This class of Markov processes will be used in \S\ref{SEC4} to characterize the semi-permeable patterns of thermal conductions in stiff problems. 

\subsection{Snapping out Brownian motion: Lejay's approach}

Let $\mathbb{G}:=(-\infty,0-]\cup[0+,\infty)$, where $0$ in $\mathbb{R}$ corresponds to either $0+$ or $0-$ viewed as two distinct points. In other words, $\mathbb{G}$ is composed of two connected components, say $(-\infty,0-]$ and $[0+,\infty)$. Write
\[
	\mathbb{G}_+:=[0+,\infty),\quad \mathbb{G}_-:=(-\infty, 0-].
\]
An SNOB is a Markov process living in $\mathbb{G}$. 
Precisely, let us start with a reflecting Brownian motion $R^+=(R^+_t)_{t\geq 0}$ on $\mathbb{G}_+$. Denote its local time at $0+$  by $(L^+_t)_{t\geq 0}$. Namely, 
\[
L^+_t=\lim_{\epsilon\downarrow 0}\frac{1}{2\epsilon}\int_{0}^t 1_{[0+,\epsilon)}(R^+_s)ds,\quad t\geq 0
\]
is a PCAF of $R^+$ with $\frac{1}{2}\delta_{\{0+\}}$ being its Revuz measure.
Let $\xi$ be an exponential random variable with a parameter $\kappa>0$ independent of $R^+$. Set
\[
Z^+_t:=\left\lbrace \begin{aligned}
 R^+_t,\quad & \mbox{if}~t<\mathfrak{t}:=\inf\{t:L^+_t>\xi\}; \\
 \Delta,\quad & \mbox{if}~t\geq \mathfrak{t}  	
 \end{aligned}
  \right. 
\]
with $\Delta$ being the trap as usual. Then $Z^+=(Z^+_t)_{t\geq 0}$ is called the \emph{elastic Brownian motion} on $\mathbb{G}_+$. We extend $Z^+$ to a process $Z$ on $\mathbb{G}$ by symmetry and call $Z$ the elastic Brownian motion on $\mathbb{G}$. In \cite{L16}, the author introduced the following definition of SNOB by means of this elastic Brownian motion and the piecing out transform.

\begin{definition}[\cite{L16}]\label{DEF31}
Let $Z$ be the elastic Brownian motion with the parameter $\kappa>0$ on $\mathbb{G}$. Then the piecing out process with instantaneous distribution $\frac{1}{2}\left(\delta_{\{0+\}}+\delta_{\{0-\}}\right)$ induced by $Z$ is called the snapping out Brownian motion on $\mathbb{G}$.
\end{definition}

Intuitively, we may think of the local time $L^+$ as the `hitting intensity' at the boundary $0+$, which increases once $R^+$ encounters $0+$. When the hitting intensity is overloaded, i.e. the local time is greater than the given threshold $\xi$, the elastic Brownian motion will die, while the SNOB will be reborn at $0+$ or $0-$ with equal probability.


\subsection{Snapping out Markov processes}\label{SEC32}
Throughout this part, $E$ is taken to be a locally compact separable metric space and $m$ is a Radon measure fully supported on it.
Inspired by the SNOB, we introduce the so-called \emph{snapping out Markov process} on a general state space as follows. 

\begin{definition}\label{DEF32}
Let $X=(X_t)_{t\geq 0}$ be an $m$-symmetric Markov process on $E$ associated with a regular Dirichlet form $(\EE,\FF)$ on $L^2(E,m)$, and take a positive, finite smooth measure $\mu$ on $E$. Denote the subprocess of $X$ induced by $\mu$ by $X^\mu=(X^\mu_t)_{t\geq 0}$ and set $\mu^{\#}:=\mu/\mu(E)$. Then the piecing out process, denoted by $X^\mathrm{s}=(X^\mathrm{s}_t)_{t\geq 0}$, with instantaneous distribution $\mu^{\#}$ induced by $X^\mu$ is called the \emph{snapping out Markov process} with respect to $X$ and $\mu$. 
\end{definition}

We need to emphasize that the Revuz correspondence between $\mu$ and the associated PCAF depends on the symmetric measure $m$. So the killing transform in Definition~\ref{DEF32} is also relevant to $m$. See Example~\ref{EXA311} for further discussions. 
 
\begin{remark}\label{RM33}
In Definition~\ref{DEF31}, the construction of SNOB starts with a two-sided reflecting Brownian motion $R=(R_t)_{t\geq 0}$ on $\mathbb{G}$ (more precisely, a union of two separate reflecting Brownian motions on $\mathbb{G}_+$ and $\mathbb{G}_-$ respectively). It is  not difficult to find that this two-sided reflecting Brownian motion is symmetric with respect to the Lebesgue measure on $\mathbb{G}$ and its associated Dirichlet form is regular on $L^2(\mathbb{G})$. Moreover, the two-sided elastic Brownian motion $Z$ is actually the subprocess of $R$ induced by $\frac{\kappa}{2}\left(\delta_{\{0+\}}+\delta_{\{0-\}} \right)$. 
\end{remark}


In advance of presenting the principal result of this part, we need to prepare some notations. Let $\zeta, \zeta^\mu, \zeta^\mathrm{s}$ (resp. $P_t, P^\mu_t, P_t^\mathrm{s}$ and $R_\alpha, R_\alpha^\mu, R_\alpha^\mathrm{s}$) be the lifetimes (resp. semigroups and resolvents) of $X, X^\mu, X^\mathrm{s}$ respectively. In abuse of notations, we use the same symbol for the expectations of $X, X^\mu, X^\mathrm{s}$. For example, 
\[
	P^{\dagger}_tf(x)=\mathbf{E}_xf(X^{\dagger}_t), \quad R^\dagger_\alpha f(x)=\mathbf{E}_x\int_0^\infty\mathrm{e}^{-\alpha t}f(X^\dagger_t)dt,
\]
where $\dagger$ is vacant or stands for $\mu$ or $\mathrm{s}$.  
 The Dirichlet form of $X^\mu$ on $L^2(E,m)$ is given by \eqref{DFE}. 
 Accordingly, we can also write down $P^\mu_t$ and $R^\mu_\alpha$ by using $X$ (Cf. \cite{FOT11}). Moreover, the following lemma links the resolvents of $X^\mu$ and $X^\mathrm{s}$. Note that $|\mu|:=\mu(E)$. 

\begin{lemma}\label{LM34}
For $\alpha>0$ and any non-negative function $f$, it holds that
\begin{equation}\label{R1}
	R^\mathrm{s}_\alpha f=R^\mu_\alpha f+\frac{\langle R^\mathrm{s}_\alpha f, \mu\rangle}{|\mu|}\cdot \mathbf{E}_x\left[\mathrm{e}^{-\alpha \zeta^\mu}; X^\mu_{\zeta^\mu-}\in E\right].
\end{equation}
\end{lemma}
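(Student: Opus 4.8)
The plan is to exploit the renewal structure of the piecing out construction and decompose the resolvent of $X^\mathrm{s}=\tilde X$ at the death time of its first excursion. Writing
\[
R^\mathrm{s}_\alpha f(x)=\tilde{\mathbf{E}}_x\int_0^\infty \mathrm{e}^{-\alpha t}f(\tilde X_t)\,dt
\]
and recalling from Lemma~\ref{LM21} that $\tau:=\zeta(\omega_1)=\zeta^\mu$ is an $\tilde{\mathcal{F}}_t$-stopping time, I would split the time integral at $\tau$ into a contribution from the first excursion and a contribution from everything thereafter, and evaluate each using the product measure $\tilde{\mathbf{P}}_x=\mathbf{Q}_x\otimes\mathbf{Q}_{y_1}\otimes\cdots$ of \cite{INW66}.

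For the first piece, on $\{t<\tau\}$ one has $\tilde X_t=\dot X_t(\mathtt{w}_1)=X^\mu_t(\omega_1)$ by \eqref{EQ2XTW}, so integrating out $\mathtt{w}_2,\mathtt{w}_3,\dots$ leaves $\mathbf{E}_x\int_0^{\zeta^\mu}\mathrm{e}^{-\alpha t}f(X^\mu_t)\,dt=R^\mu_\alpha f(x)$, where $f(\Delta)=0$ lets us extend the integral to $+\infty$. For the second piece, I would substitute $t=\tau+s$ and peel off the first excursion: holding $\mathtt{w}_1=(\omega_1,y_1)$ fixed and integrating over $(\mathtt{w}_2,\mathtt{w}_3,\dots)$, the product structure of $\tilde{\mathbf{P}}_x$ identifies the shifted trajectory $(\tilde X_{\tau+s})_{s\ge0}$ with a fresh piecing out process started from the resurrection site $y_1$, giving
\[
\tilde{\mathbf{E}}_x\!\left[\int_\tau^\infty \mathrm{e}^{-\alpha t}f(\tilde X_t)\,dt\,\Big|\,\mathtt{w}_1\right]=\mathrm{e}^{-\alpha\zeta(\omega_1)}R^\mathrm{s}_\alpha f(y_1).
\]
Integrating over $\mathtt{w}_1$ under $\mathbf{Q}_x(d\mathtt{w}_1)=\mathbf{P}_x(d\omega_1)\nu(\omega_1,dy_1)$ (with $\mathbf{P}_x$ the law of $X^\mu$) and invoking the explicit form \eqref{EQ2NDY} of the instantaneous distribution — so that $\nu(\omega_1,\cdot)=\mu^{\#}$ on $\{X^\mu_{\zeta^\mu-}\in E\}$ and $\nu(\omega_1,\cdot)=\delta_{\{\Delta\}}$ otherwise, the latter contributing nothing since $R^\mathrm{s}_\alpha f(\Delta)=0$ — the constant factor $\int_E R^\mathrm{s}_\alpha f\,d\mu^{\#}=\langle R^\mathrm{s}_\alpha f,\mu\rangle/|\mu|$ pulls out of the expectation, yielding
\[
\tilde{\mathbf{E}}_x\int_\tau^\infty \mathrm{e}^{-\alpha t}f(\tilde X_t)\,dt=\frac{\langle R^\mathrm{s}_\alpha f,\mu\rangle}{|\mu|}\,\mathbf{E}_x\!\left[\mathrm{e}^{-\alpha\zeta^\mu};X^\mu_{\zeta^\mu-}\in E\right].
\]
Adding the two contributions gives \eqref{R1}.

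The main obstacle is the rigorous justification of the renewal identity in the second term. One must carefully use the product structure of $\tilde{\mathbf{P}}_x$ together with the fact that, conditionally on the first excursion, the shifted sequence $(\mathtt{w}_2,\mathtt{w}_3,\dots)$ is distributed exactly as the piecing out process launched from $y_1$; this is in essence the strong Markov property of $\tilde X$ at $\tau$ from \cite{INW66}. The delicate bookkeeping is the separation of the \emph{interior death} event $\{X^\mu_{\zeta^\mu-}\in E\}$, on which resurrection according to $\mu^{\#}$ occurs and the constant $\langle R^\mathrm{s}_\alpha f,\mu\rangle/|\mu|$ factors cleanly through the $y_1$-integration, from the \emph{natural death} event, on which $\tilde X$ passes to $\Delta$ and contributes zero.
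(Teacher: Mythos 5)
Your proposal is correct and follows essentially the same route as the paper: both decompose the resolvent at the first death time $\tau=\zeta^\mu$ (a stopping time by Lemma~\ref{LM21}), identify the pre-$\tau$ contribution with $R^\mu_\alpha f$, and use the fact that the reborn site is distributed as $\mu^{\#}$ independently of the first excursion to pull out the constant $\langle R^\mathrm{s}_\alpha f,\mu\rangle/|\mu|$. The only cosmetic difference is that you justify the renewal identity directly from the product structure of $\tilde{\mathbf{P}}_x$, whereas the paper phrases the same step as the strong Markov property of $X^\mathrm{s}$ at $\zeta^\mu$ conditioned on $\mathcal{F}^{\mathrm{s}}_{\zeta^\mu}$.
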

\begin{proof}
We first note that $\zeta^\mu$ is a stopping time of $X^\mathrm{s}$ in the sense of Lemma~\ref{LM21}, and $X^\mathrm{s}=X^\mu$ before $\zeta^\mu$. Since $X^\mu_{\zeta^\mu-}=\Delta$ implies $\zeta^\mathtt{s}=\zeta^\mu$, it follows that
\[
\begin{aligned}
R_\alpha^\mathrm{s}f(x)&=\mathbf{E}_x\int_0^{\infty}\mathrm{e}^{-\alpha t}f(X^\mathrm{s}_t)dt\\& =\mathbf{E}_x\int_0^{\zeta^\mu}\mathrm{e}^{-\alpha t}f(X^\mathrm{s}_t)dt +\mathbf{E}_x\left[\int_{\zeta^\mu}^{\infty}\mathrm{e}^{-\alpha t}f(X^\mathrm{s}_t)dt; X^\mu_{\zeta^\mu-}\in E \right]\\
&=R_\alpha^\mu f(x)+\mathbf{E}_x \left[\mathrm{e}^{-\alpha \zeta^\mu} \cdot  \mathbf{E}_x\left[\left(\int_0^\infty\mathrm{e}^{-\alpha t}f(X^\mathrm{s}_t)dt\right)\circ \theta^{\mathrm{s}}_{\zeta^\mu}\bigg|\mathcal{F}^{\mathrm{s}}_{\zeta^\mu}\right]; X^\mu_{\zeta^\mu-}\in E \right] \\
&=R_\alpha^\mu f(x)+\mathbf{E}_x \left[\mathrm{e}^{-\alpha \zeta^\mu} \cdot R^\mathrm{s}_\alpha f(X^\mathrm{s}_{\zeta^\mu}) ; X^\mu_{\zeta^\mu-}\in E \right]. 
\end{aligned}\]
On the other hand, $X^\mathrm{s}_{\zeta^\mu}$ is distributed as $\mu^{\#}$ and independent of $\zeta^\mu$ and $X^\mu$ by \eqref{EQ2XTW} and \eqref{EQ2NDY}. Then we can conclude \eqref{R1}. 
That completes the proof. 
\end{proof}

Now we have a position to present the principal theorem of this part. It tells us if $X$ has no killing inside, then the snapping out Markov process $X^\s$ is $m$-symmetric and the associated Dirichlet form can be also characterized.

\begin{theorem}\label{THM35}
Let $X$ and $\mu$ be in Definition~\ref{DEF32} and $X^\mathrm{s}$ be the snapping out Markov process with respect to $X$ and $\mu$. Set $|\mu|=\mu(E)$. Assume that $X$ or $(\EE,\FF)$ has no killing inside. 
Then $X^\mathrm{s}$ is $m$-symmetric on $E$, and its associated Dirichlet form is regular on $L^2(E,m)$ and given by 
\begin{equation}\label{EQ3FSU}
 \begin{aligned}
  \FF^\mathrm{s}&=\left\{u\in \FF: \int_{E\times E}\left(u(x)-u(y)\right)^2\mu(dx)\mu(dy)<\infty\right\}, \\
 \EE^\mathrm{s}(u,v)&=\EE(u,v)+\frac{1}{2|\mu|}\int_{E\times E}\left(u(x)-u(y)\right)\left(v(x)-v(y)\right)\mu(dx)\mu(dy),\quad u,v\in \FF^\s.
 \end{aligned}
\end{equation}
Furthermore, any special standard core of $(\EE,\FF)$ remains to be a special standard core of $(\EE^\mathrm{s},\FF^\mathrm{s})$. 
\end{theorem}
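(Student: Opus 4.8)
The plan is to first reformulate the target form \eqref{EQ3FSU} into a more tractable shape, then to realize it abstractly as a regular Dirichlet form and match its resolvent against the probabilistic resolvent $R^\s_\alpha$ supplied by Lemma~\ref{LM34}. Expanding the double integral and using $\langle u,\mu\rangle^2\le|\mu|\int_E u^2\,d\mu$, one checks that
\[
\frac{1}{2|\mu|}\int_{E\times E}\big(u(x)-u(y)\big)\big(v(x)-v(y)\big)\mu(dx)\mu(dy)=\int_E uv\,d\mu-\frac{1}{|\mu|}\langle u,\mu\rangle\langle v,\mu\rangle,
\]
so that \eqref{EQ3FSU} is equivalent to the assertion
\[
\FF^\s=\FF^\mu=\FF\cap L^2(E,\mu),\qquad \EE^\s(u,v)=\EE^\mu(u,v)-\frac{1}{|\mu|}\langle u,\mu\rangle\langle v,\mu\rangle.
\]
In other words $(\EE^\s,\FF^\s)$ is exactly the perturbed form $(\EE^\mu,\FF^\mu)$ of \eqref{DFE} corrected by a single rank-one term, and this is the description I would work with throughout.

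The computational heart is the potential-theoretic identity
\[
\phi_\alpha(x):=\mathbf{E}_x\big[\mathrm{e}^{-\alpha\zeta^\mu};\,X^\mu_{\zeta^\mu-}\in E\big]=R^\mu_\alpha\mu(x)\quad\EE\text{-q.e.},
\]
that is, the quantity appearing in \eqref{R1} is the $\alpha$-potential of $\mu$ for the killed process $X^\mu$. Here the hypothesis that $X$ has no killing inside is essential: it guarantees that the only way for $X^\mu$ to die inside $E$ is through the $\mu$-killing, so that the conditional killing rate is precisely $dA_t$ for the PCAF $A$ of $\mu$, and the Revuz correspondence applied to $X^\mu$ then yields $\phi_\alpha=R^\mu_\alpha\mu$. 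I would stress one clean point: $\mu$ is automatically of finite energy integral with respect to $(\EE^\mu,\FF^\mu)$, even though it need not be so with respect to $(\EE,\FF)$, because $\EE^\mu_1(v,v)\ge\int_E v^2\,d\mu$ makes $v\mapsto\langle v,\mu\rangle$ an $\EE^\mu_1$-continuous functional. Hence $\phi_\alpha\in\FF^\mu$ and $\EE^\mu_\alpha(\phi_\alpha,v)=\langle v,\mu\rangle$ for every $v\in\FF^\mu$.

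With this in hand I would solve the resolvent equation twice and compare. Integrating \eqref{R1} against $\mu$ and solving the resulting scalar equation (legitimate since $\langle\phi_\alpha,\mu\rangle<|\mu|$, as $\phi_\alpha<1$ pointwise because $\mathbf{E}_x[\mathrm{e}^{-\alpha\zeta^\mu}]<1$) gives
\[
R^\s_\alpha f=R^\mu_\alpha f+\frac{\langle R^\mu_\alpha f,\mu\rangle}{|\mu|-\langle\phi_\alpha,\mu\rangle}\,\phi_\alpha.
\]
On the other hand, a direct variational computation shows that the resolvent $G_\alpha$ of the reformulated form obeys the same formula: solving $\EE^\mu_\alpha(u,v)-\tfrac{1}{|\mu|}\langle u,\mu\rangle\langle v,\mu\rangle=(f,v)_m$ forces $u=R^\mu_\alpha f+\tfrac{\langle u,\mu\rangle}{|\mu|}\phi_\alpha$ via $\EE^\mu_\alpha(\phi_\alpha,\cdot)=\langle\cdot,\mu\rangle$, and pairing with $\mu$ recovers the same constant. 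Thus $G_\alpha=R^\s_\alpha$, which at once proves that $X^\s$ is $m$-symmetric, since its resolvent coincides with that of a symmetric form, and that the associated form is \eqref{EQ3FSU}. Symmetry can alternatively be read directly off the displayed formula, because $(\phi_\alpha,g)_m=\langle R^\mu_\alpha g,\mu\rangle$ by $\phi_\alpha=R^\mu_\alpha\mu$ renders the correction term manifestly symmetric in $(f,g)$.

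It remains to verify that $(\EE^\s,\FF^\s)$ is a \emph{regular} Dirichlet form admitting the asserted cores, which I regard as routine once the rank-one structure is exposed. Define the bounded self-adjoint operator $T$ on the Hilbert space $(\FF^\mu,\EE^\mu_1)$ by $Tu=\tfrac{\langle u,\mu\rangle}{|\mu|}\phi_1$; it is of rank one with $\|T\|=\langle\phi_1,\mu\rangle/|\mu|<1$, so that $\EE^\s_1=\EE^\mu_1\big((I-T)\cdot,\cdot\big)$ is an inner product equivalent to $\EE^\mu_1$ on $\FF^\mu$. Hence $(\EE^\s,\FF^\s)$ is closed and Markovian, the Markov property being transparent from the first form in \eqref{EQ3FSU} as normal contractions increase neither $\EE$ nor the double integral, and a set is $\EE^\s_1$-dense iff it is $\EE^\mu_1$-dense. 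Since $(\EE^\mu,\FF^\mu)$ is regular by \cite[Theorem~5.1.6]{CF12} and any special standard core $\mathcal{C}$ of $(\EE,\FF)$ lies in $C_c(E)\subseteq L^2(E,\mu)$ ($\mu$ being finite) and is $\EE^\mu_1$-dense, the norm equivalence transfers this to $\EE^\s_1$, so $\mathcal{C}$ is a special standard core of $(\EE^\s,\FF^\s)$. I expect the genuinely delicate step to be the identity $\phi_\alpha=R^\mu_\alpha\mu$ together with the handling of quasi-continuous versions and the passage from the probabilistic identity \eqref{R1}, stated for non-negative $f$, to an $L^2$-operator identity; the remainder is bookkeeping.
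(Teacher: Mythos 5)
Your proposal is correct and follows essentially the same route as the paper: the same key identity $\mathbf{E}_\cdot[\mathrm{e}^{-\alpha\zeta^\mu};X^\mu_{\zeta^\mu-}\in E]=U^\mu_\alpha\mu$ (where the no-killing hypothesis enters), the same resolvent formula \eqref{EQ3RSF} derived from Lemma~\ref{LM34}, and the same symmetry argument via $(U^\mu_\alpha\mu,g)_m=\langle R^\mu_\alpha g,\mu\rangle$. The only cosmetic differences are that you establish closedness, regularity and the core transfer through the rank-one operator $T$ and the norm equivalence $\EE^\s_1\simeq\EE^\mu_1$ (the paper instead cites the closability result of Albeverio--Song and uses only the one-sided bound $\EE^\s_1\le\EE^\mu_1$ on bounded functions), and you identify the form by matching resolvents rather than verifying $\EE^\s_\alpha(R^\s_\alpha f,g)=(f,g)_m$ directly.
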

\begin{proof}
We first show $(\EE^\mathrm{s},\FF^\mathrm{s})$ given by \eqref{EQ3FSU} is a regular Dirichlet form on $L^2(E,m)$. It is proved in \cite{AS93} that \eqref{EQ3FSU} is a Dirichlet form. Thus we need only prove its regularity. Let $\mathscr{C}$ be a special standard core of $(\EE,\FF)$. Then it is also a core of $(\EE^\mu, \FF^\mu)$ by \cite[Theorem~5.1.6]{CF12}. Denote the families of all the bounded functions in $\FF$, $\FF^\mu$ and $\FF^\mathrm{s}$ by $\FF_\mathrm{b}$, $\FF^\mu_\mathrm{b}$ and $\FF^\mathrm{s}_\mathrm{b}$ respectively. Since $\mu$ is a finite measure, we have
\[
	\mathscr{C}\subset \FF^\mathrm{s}_\mathrm{b}=\FF_\mathrm{b}=\FF^\mu_\mathrm{b}.
\]
On the other hand, for any $u\in \FF^\mathrm{s}_\mathrm{b}=\FF^\mu_\mathrm{b}$, 
\[
\begin{aligned}
	\EE^\mathrm{s}_1(u,u)&=\EE_1(u,u)+\frac{1}{2|\mu|}\int \left(u(x)-u(y) \right)^2\mu(dx)\mu(dy) \\
	 &= \EE_1(u,u)+\int u^2d\mu-\langle u, \mu\rangle^2/|\mu|  \\
	 &\leq \EE^\mu_1(u,u).
\end{aligned}\]
For any $u\in \FF^\mathrm{s}_\mathrm{b}=\FF^\mu_\mathrm{b}$, we can take a sequence $\{u_n:n\geq 1\}$ in $\mathscr{C}$ such that $u_n$ converges to $u$ in $\EE^\mu_1$-norm. Thus from the above inequality, we can obtain that $u_n$ also converges to $u$ in $\EE^\mathrm{s}_1$-norm. This implies $(\EE^\mathrm{s},\FF^\s)$ is a regular Dirichlet form on $L^2(E,m)$ and $\mathscr{C}$ is its special standard core.

Next, we assert that $X^\s$ is $m$-symmetric under the assumption that $X$ has no killing inside. Note that $\mu$ is a measure of finite energy integral with respect to $\EE^\mu$, i.e. 
\[
	\mu(|v|)\leq |\mu|\sqrt{\EE^\mu_1(v,v)},\quad v\in \FF^\mu.
\]
Thus the $\alpha$-potential $U^\mu_{\alpha}\mu$ of $\mu$ exists with
\begin{equation}\label{EQ3EMU}
\EE^\mu_{\alpha}(U^\mu_{\alpha}\mu, v)=\langle v,\mu\rangle,\quad v\in \FF^\mu.
\end{equation}
Since $X$ has no killing inside, the killing measure of $(\EE^\mu,\FF^\mu)$ is equal to $\mu$. Applying \cite[Lemma~4.5.2]{FOT11} to $\mu$, we have
\[
	U^\mu_{\alpha}\mu(\cdot)=\mathbf{E}_\cdot \left[\mathrm{e}^{-\alpha \zeta^\mu}; X^\mu_{\zeta^\mu-}\in E\right].
\]
Clearly, $\mathbf{P}_x\left[\zeta^\mu=0;X^\mu_{\zeta^\mu-}\in E \right]=0$ and this implies $\langle U^\mu_\alpha \mu, \mu\rangle<|\mu|$ for any $\alpha>0$. For any positive function $f$, it follows from Lemma~\ref{LM34} that
\[
\langle R^\s_\alpha f,\mu\rangle=\langle R^\mu_\alpha f,\mu\rangle+\frac{\langle R^\s_\alpha f,\mu\rangle \cdot \langle U^\mu_\alpha \mu, \mu\rangle}{|\mu|}
\]
and thus 
\[\langle R^\s_\alpha f,\mu\rangle=\frac{\langle R^\mu_\alpha f, \mu\rangle}{1-\langle U^\mu_{\alpha} \mu, \mu\rangle/|\mu|}. \]
We substitute this into \eqref{R1} to find that
\begin{equation}\label{EQ3RSF}
	R^\mathrm{s}_\alpha f=R^\mu_\alpha f+\frac{\langle R^\mu_\alpha f, \mu\rangle}{1-\langle U^\mu_{\alpha} \mu, \mu\rangle/|\mu|} \cdot U^\mu_\alpha \mu.
\end{equation}
For another positive function $g\in L^2(E,m)$, \eqref{EQ3EMU} implies 
\begin{equation}\label{EQ3UAM}
	(U_\alpha^\mu \mu, g)_m=\EE^\mu_\alpha(U^\mu_\alpha\mu, R^\mu_\alpha g)=\langle R^\mu_\alpha g, \mu\rangle. 
\end{equation}
Then from \eqref{EQ3RSF} we have $(R^\mathrm{s}_\alpha f, g)_m=(f,R^\mathrm{s}_\alpha g)_m$. This concludes that $X^\s$ is $m$-symmetric.

Finally, it suffices to prove that for any $\alpha>0$ and $f\in L^2(E,m)$,
\[
R^\s_\alpha f\in \FF^{\s},\qquad \EE^\mathrm{s}_\alpha(R^\s_\alpha f, g)=(f,g)_m,\quad \forall g\in \FF^\mathrm{s}_\mathrm{b}. 
\]
Note that $\FF^\mu\subset \FF^\mathrm{s}$. Since $R^\mu_\alpha f, U^\mu_\alpha \mu \in \FF^\mu$, it follows from \eqref{EQ3RSF} that $R^\s_\alpha f\in \FF^\mu\subset \FF^\mathrm{s}$. Moreover, we can obtain from Lemma~\ref{LM34}, \eqref{EQ3EMU} and $g\in \FF^\mathrm{s}_\mathrm{b}=\FF^\mu_\mathrm{b}$ that
\[
\begin{aligned}
	\EE^\mathrm{s}_\alpha(R^\s_\alpha f, g)&=\EE^\mu_\alpha(R^\s_\alpha f, g)-\frac{1}{|\mu|}\langle R^\s_\alpha f, \mu\rangle \cdot \langle g, \mu\rangle \\
&=\EE^\mu_\alpha(R^\mu_\alpha f, g)+\frac{\langle R^\s_\alpha f, \mu\rangle}{|\mu|}\cdot \EE^\mu_\alpha(U^\mu_\alpha \mu, g)-\frac{1}{|\mu|}\langle R^\s_\alpha f, \mu\rangle \cdot \langle g, \mu\rangle \\
&=(f,g)_m+\frac{1}{|\mu|}\langle R^\s_\alpha f, \mu\rangle \cdot \langle g, \mu\rangle-\frac{1}{|\mu|}\langle R^\s_\alpha f, \mu\rangle \cdot \langle g, \mu\rangle \\
&=(f,g)_m.
\end{aligned}\]
That completes the proof. 
\end{proof}

The assumption that $X$ has no killing inside is necessary for the symmetry of $X^\s$. For interpreting this fact, suppose the killing measure $k$ ($\neq 0$) of $X$ is of finite energy integral with respect to $(\EE,\FF)$. Then $\mathbf{E}_x[\mathrm{e}^{-\alpha \zeta^\mu}; X^\mu_{\zeta^\mu-}\in E]=U^\mu_\alpha(k+\mu)(x)$. Mimicking \eqref{EQ3RSF}, we can conclude that for positive functions $f$ and $g$,
\[
	(R^\s_\alpha f, g)_m=(R^\mu_\alpha f,g)_m+\frac{\langle R^\mu_\alpha f, \mu\rangle\cdot \langle R^\mu_\alpha g, k+\mu\rangle}{1-\langle U^\mu_\alpha(k+\mu),\mu\rangle/|\mu|}.
\]
Consequently, the presence of $k$ breaks the symmetry of $X^\s$. At a heuristic level, many jumps are added into the trajectories by the piecing out transform. The additional jumps start with an initial `distribution' given by the killing measure $k+\mu$ and arrive at random sites distributed as $\mu^{\#}$. When $k\neq 0$, the additional jumping measure is not symmetric and thus the symmetry of $X^\s$ is broken. 

On the other hand, the regularity of \eqref{EQ3FSU} only depends on the finiteness and smoothness of $\mu$. Even if $k\neq 0$, \eqref{EQ3FSU} is still regular and corresponds to an $m$-symmetric Markov process. This process could be realized as follows: We first construct the resurrected Markov process $X^\mathrm{res}$ of $X$ according to \cite[Theorem~5.2.17]{CF12}, then attain the snapping out Markov process $X^\mathrm{res, s}$ with respect to $X^\mathrm{res}$ and $\mu$ and finally apply the killing transform induced by $k$ to $X^\mathrm{res, s}$. Note that $k$ is also smooth with respect to $X^\mathrm{res, s}$ due to the following corollary. 

\begin{corollary}\label{COR36}
Let $X,\mu$ be in Theorem~\ref{THM35}, but we do not assume $X$ has no killing inside. Further let $(\EE^\mathrm{s},\FF^\mathrm{s})$ be defined by \eqref{EQ3FSU}. Then $(\EE^\mathrm{s},\FF^\mathrm{s})$ is a regular Dirichlet form on $L^2(E,m)$ sharing the same set of quasi-notions with $(\EE,\FF)$. In other words, an increasing sequence of closed subsets of $E$ (resp. a subset of $E$, a function on $E$) is an $\EE^\mathrm{s}$-nest (resp. $\EE^\mathrm{s}$-polar set, $\EE^\mathrm{s}$-quasi-continuous function) if and only it is an $\EE$-nest (resp. $\EE$-polar set, or $\EE$-quasi-continuous function).
\end{corollary}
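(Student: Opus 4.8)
The plan is to separate the two assertions---regularity of $(\EE^\s,\FF^\s)$, and the coincidence of quasi-notions---since only the latter requires genuine work. For regularity I would simply observe that the first paragraph of the proof of Theorem~\ref{THM35} never uses the no-killing hypothesis: that $(\EE^\s,\FF^\s)$ is a Dirichlet form is supplied by \cite{AS93}, and the regularity argument only invokes that a special standard core $\mathscr{C}$ of $(\EE,\FF)$ is a core of $(\EE^\mu,\FF^\mu)$ (by \cite[Theorem~5.1.6]{CF12}), the identity $\mathscr{C}\subset\FF^\s_\mathrm{b}=\FF_\mathrm{b}=\FF^\mu_\mathrm{b}$ (valid because $\mu$ is finite), and the inequality $\EE^\s_1(u,u)\le\EE^\mu_1(u,u)$ for bounded $u$. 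Hence $(\EE^\s,\FF^\s)$ is automatically regular on $L^2(E,m)$ with $\mathscr{C}$ as a common special standard core, whether or not $X$ has killing inside.

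For the quasi-notions my strategy is to sandwich the $1$-capacity $\mathrm{Cap}^\s$ between $\mathrm{Cap}$ and $\mathrm{Cap}^\mu$. As in Theorem~\ref{THM35} one has, for $u\in\FF^\s$,
\[
\EE^\s_1(u,u)=\EE_1(u,u)+\int_E u^2\,d\mu-\frac{\langle u,\mu\rangle^2}{|\mu|}.
\]
Since $\int_{E\times E}(u(x)-u(y))^2\mu(dx)\mu(dy)=2|\mu|\int_E u^2\,d\mu-2\langle u,\mu\rangle^2$ is finite precisely when $u\in L^2(E,\mu)$, the domain is $\FF^\s=\FF\cap L^2(E,\mu)=\FF^\mu$; and the Cauchy--Schwarz bound $0\le\langle u,\mu\rangle^2/|\mu|\le\int_E u^2\,d\mu$ gives
\[
\EE_1(u,u)\le\EE^\s_1(u,u)\le\EE^\mu_1(u,u),\qquad u\in\FF^\s=\FF^\mu.
\]
Because $\FF^\s\subset\FF$ and $\FF^\s=\FF^\mu$, these monotone norm bounds translate into $\mathrm{Cap}\le\mathrm{Cap}^\s\le\mathrm{Cap}^\mu$ on every subset of $E$.

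I would then transfer the quasi-notions across this sandwich, using the standard fact that the killing perturbation by a smooth measure preserves exceptional sets: $(\EE^\mu,\FF^\mu)$ and $(\EE,\FF)$ have the same polar sets and the same nests (Cf. \cite{CF12,FOT11}), intuitively because $X^\mu$ is the subprocess of $X$ under the continuous multiplicative functional $\mathrm{e}^{-A_t}$, which never kills instantaneously, so $X$ and $X^\mu$ reach exactly the same sets with positive probability. Granting this, polarity transfers at once: $\mathrm{Cap}^\s(N)=0\Rightarrow\mathrm{Cap}(N)=0$ from the left inequality, while $\mathrm{Cap}(N)=0\Rightarrow\mathrm{Cap}^\mu(N)=0\Rightarrow\mathrm{Cap}^\s(N)=0$ from the right inequality and the $\EE$-versus-$\EE^\mu$ identification. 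Using the characterization that $\{F_k\}$ is a nest iff $\mathrm{Cap}(K\setminus F_k)\to0$ for every compact $K$, the same sandwich shows $\EE^\s$-nests coincide with $\EE$-nests; and since quasi-continuity is tested on nests, the quasi-continuous functions coincide as well.

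The step I expect to be the real obstacle is the reverse inclusion $\text{$\EE$-polar}\Rightarrow\text{$\EE^\s$-polar}$ (and likewise for nests), because capacity monotonicity only delivers the easy direction $\mathrm{Cap}\le\mathrm{Cap}^\s$. If one wishes to avoid quoting the perturbation fact, I would argue directly: for an $\EE$-polar $A$, choose open $G_n\supset A$ with $\mathrm{Cap}(G_n)\to0$ and their $1$-equilibrium potentials $e_n$, so that $0\le e_n\le1$, $e_n=1$ q.e.\ on $G_n$, $e_n\in\FF_\mathrm{b}\subset\FF^\s$, and $\EE_1(e_n,e_n)=\mathrm{Cap}(G_n)\to0$. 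Along a subsequence $e_n\to0$ q.e., and since $\mu$ charges no $\EE$-polar set this convergence is $\mu$-a.e.; dominated convergence (using $0\le e_n\le1$ and $|\mu|<\infty$) then forces $\int_E e_n^2\,d\mu\to0$, hence the non-local term $\int_{E\times E}(e_n(x)-e_n(y))^2\mu(dx)\mu(dy)\to0$ and $\EE^\s_1(e_n,e_n)\to0$, giving $\mathrm{Cap}^\s(A)=0$. This is the unique place where the smoothness of $\mu$ is indispensable, and it is exactly what underlies the subsequent remark that a measure charging no $\EE$-polar set also charges no $\EE^\s$-polar set.
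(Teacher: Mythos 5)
Your proposal is correct and follows essentially the same route as the paper: the capacity sandwich $\mathrm{Cap}\le\mathrm{Cap}^{\mathrm{s}}\le\mathrm{Cap}^{\mu}$ obtained from $\EE_1\le\EE^{\mathrm{s}}_1\le\EE^{\mu}_1$ on $\FF^{\mathrm{s}}=\FF^{\mu}$, combined with the fact that the perturbed form $(\EE^{\mu},\FF^{\mu})$ shares its quasi-notions with $(\EE,\FF)$ (the paper cites \cite[Theorem~5.1.4]{CF12}) and the capacity characterization of nests (\cite[Theorem~1.3.4]{CF12}). Your optional equilibrium-potential argument for the direction $\EE$-polar $\Rightarrow$ $\EE^{\mathrm{s}}$-polar is a sound self-contained substitute for that citation, but it does not change the overall structure of the proof.
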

\begin{proof}
We need only prove that an $\EE^\mathrm{s}$-nest is an $\EE$-nest and vice versa. Note that $(\EE^\mu,\FF^\mu)$ has the same quasi-notions as $(\EE,\FF)$ due to \cite[Theorem~5.1.4]{CF12}. Denote the $1$-capacities of $(\EE,\FF)$, $(\EE^\mu, \FF^\mu)$ and $(\EE^\mathrm{s},\FF^\mathrm{s})$ by $\text{Cap}$, $\text{Cap}^\mu$ and $\text{Cap}^\s$ respectively. Since
\begin{equation}\label{EQ3FSF}
	\FF^\mathrm{s}\subset \FF,\quad \EE^\mathrm{s}(u,u)\geq \EE(u,u),\quad \forall u\in \FF^\mathrm{s}, 
\end{equation}
it follows that $\text{Cap}(A)\leq \text{Cap}^\s(A)$ for an appropriate subset $A$ of $E$. Similarly we can also attain $\text{Cap}^\s(A)\leq \text{Cap}^\mu(A)$. The conclusion then follows from \cite[Theorem~1.3.4]{CF12}.
\end{proof}
\begin{remark}
Denote the resurrected Dirichlet form of $(\EE,\FF)$ by $(\EE^\mathrm{res},\FF^\mathrm{res})$. In the light of \cite[Theorems~5.1.4 and 5.2.17]{CF12}, $(\EE,\FF)$, $(\EE^\mathrm{res},\FF^\mathrm{res})$, $(\EE^\mu,\FF^\mu)$ and $(\EE^\mathrm{s}, \FF^\mathrm{s})$ all share the same set of quasi-notions. 
\end{remark}

Furthermore, we can also characterize the extended Dirichlet space of \eqref{EQ3FSU} and the global properties of snapping out Markov process. 

\begin{proposition}\label{PRO38}
Let $(\EE,\FF)$ and $(\EE^\mathrm{s},\FF^\mathrm{s})$ be in Corollary~\ref{COR36}. Then the extended Dirichlet space of $(\EE^\mathrm{s},\FF^\mathrm{s})$ is given by
\begin{equation}\label{EQ3FSEU}
	\FF^\mathrm{s}_\mathrm{e}=\left\{u\in \FF_\mathrm{e}: \int_{E\times E}\left(u(x)-u(y)\right)^2\mu(dx)\mu(dy)<\infty\right\},
\end{equation}
where $\FF_\mathrm{e}$ is the extended Dirichlet space of $(\EE,\FF)$. Particularly, 
the following assertions hold:
\begin{itemize}
\item[(1)] $(\EE,\FF)$ is recurrent, if and only if $(\EE^\mathrm{s},\FF^\mathrm{s})$ is recurrent. 
\item[(2)] If $(\EE,\FF)$ is transient, then $(\EE^\mathrm{s},\FF^\mathrm{s})$ is transient. If $(\EE,\FF)$ is irreducible, then the transience of $(\EE^\mathrm{s},\FF^\mathrm{s})$ also implies the transience of $(\EE,\FF)$. 
\item[(3)] If $(\EE,\FF)$ is recurrent or local, then the irreducibility of $(\EE,\FF)$ implies the irreducibility of $(\EE^\mathrm{s},\FF^\mathrm{s})$.              
\end{itemize}
\end{proposition}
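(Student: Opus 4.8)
The plan is to establish the description of $\FF^\mathrm{s}_\mathrm{e}$ first and then read off the three assertions from it. Write $\mathcal{J}(u,v):=\frac{1}{2|\mu|}\int_{E\times E}(u(x)-u(y))(v(x)-v(y))\,\mu(dx)\mu(dy)$, so that $\EE^\mathrm{s}=\EE+\mathcal{J}$ with $\mathcal{J}\ge0$. As in the proof of Theorem~\ref{THM35}, $\mathcal{J}(u,u)=\int_E u^2\,d\mu-\langle u,\mu\rangle^2/|\mu|$, whence $\int_{E\times E}(u(x)-u(y))^2\mu(dx)\mu(dy)<\infty$ if and only if $u\in L^2(E,\mu)$ (for the forward direction fix, via Fubini, a $y$ with $\int(u(x)-u(y))^2\mu(dx)<\infty$). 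Thus the claimed set is $\FF_\mathrm{e}\cap L^2(E,\mu)$. For the inclusion $\FF^\mathrm{s}_\mathrm{e}\subseteq\FF_\mathrm{e}\cap L^2(\mu)$, take an $\EE^\mathrm{s}$-approximating sequence $\{u_n\}\subset\FF^\mathrm{s}$ with $u_n\to u$ $m$-a.e.; since $\EE\le\EE^\mathrm{s}$ it is $\EE$-Cauchy, so $u\in\FF_\mathrm{e}$, and passing to quasi-continuous versions along a q.e.-convergent subsequence (permissible since $\EE$ and $\EE^\mathrm{s}$ share quasi-notions by Corollary~\ref{COR36} and $\mu$ is smooth, so q.e. convergence is $\mu$-a.e.) Fatou applied to the bounded quantities $\mathcal{J}(u_n,u_n)$ gives $\mathcal{J}(u,u)<\infty$. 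The reverse inclusion I would get cheaply from the killing perturbation: $\FF^\mathrm{s}=\FF^\mu$ as sets and $\EE^\mathrm{s}\le\EE^\mu$ on it, so $\FF^\mu_\mathrm{e}\subseteq\FF^\mathrm{s}_\mathrm{e}$, while $\FF^\mu_\mathrm{e}=\FF_\mathrm{e}\cap L^2(\mu)$ is the standard extended space of a killing perturbation (obtained by truncation and dominated convergence, using $|\mu|<\infty$).

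Granting this identity, (1) and (2) are short. Since $\mu$ is finite, $1\in L^2(\mu)$, so $1\in\FF^\mathrm{s}_\mathrm{e}\Leftrightarrow1\in\FF_\mathrm{e}$, and $\EE^\mathrm{s}(1,1)=\EE(1,1)$ because $\mathcal{J}(1,1)=0$; as recurrence means precisely that $1\in\FF_\mathrm{e}$ with zero energy, (1) follows. For the first half of (2), transience of $(\EE,\FF)$ provides a reference function $g$ with $\int_E|u|g\,dm\le\EE(u,u)^{1/2}$ on $\FF_\mathrm{e}$; as $\FF^\mathrm{s}_\mathrm{e}\subseteq\FF_\mathrm{e}$ and $\EE\le\EE^\mathrm{s}$, the same $g$ certifies transience of $(\EE^\mathrm{s},\FF^\mathrm{s})$. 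The converse under irreducibility uses the dichotomy that an irreducible form is recurrent or transient: if $(\EE,\FF)$ were recurrent then (1) would force $(\EE^\mathrm{s},\FF^\mathrm{s})$ recurrent, contradicting its assumed transience, so $(\EE,\FF)$ is transient.

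For (3) I would split according to the two hypotheses. If $(\EE,\FF)$ is recurrent, then $(\EE^\mathrm{s},\FF^\mathrm{s})$ is recurrent by (1), and I would invoke that a recurrent form is irreducible if and only if every zero-energy function in its extended space is constant: if $\EE^\mathrm{s}(u,u)=0$ then $\EE(u,u)=0$, and irreducibility of the recurrent $(\EE,\FF)$ makes $u$ constant, so $(\EE^\mathrm{s},\FF^\mathrm{s})$ is irreducible. If instead $(\EE,\FF)$ is local, let $A$ be $\EE^\mathrm{s}$-invariant; then $1_A$ preserves the bounded part of $\FF^\mathrm{s}_\mathrm{e}=\FF_\mathrm{e}\cap L^2(\mu)$, which upgrades to $1_A\FF_\mathrm{e}\subseteq\FF_\mathrm{e}$. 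A direct computation with disjoint supports gives $\mathcal{J}(1_A u,1_{A^c}v)=-\langle1_Au,\mu\rangle\langle1_{A^c}v,\mu\rangle/|\mu|$, whereas the absence of a jumping part makes the mixed energy $\EE(1_Au,1_{A^c}v)$ vanish; hence the invariance relation $\EE(1_Au,1_{A^c}v)+\mathcal{J}(1_Au,1_{A^c}v)=0$ forces $\EE(1_Au,1_{A^c}v)=0$ on its own, so $A$ is already $\EE$-invariant, and irreducibility of $(\EE,\FF)$ gives $m(A)\in\{0,m(E)\}$.

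I expect the local case of (3) to be the main obstacle. The delicate points are upgrading $\EE^\mathrm{s}$-invariance of $A$ to $1_A\FF_\mathrm{e}\subseteq\FF_\mathrm{e}$ despite $\FF^\mathrm{s}_\mathrm{e}$ being only $\FF_\mathrm{e}\cap L^2(\mu)$ rather than all of $\FF_\mathrm{e}$, and justifying that locality genuinely annihilates the mixed energy $\EE(1_Au,1_{A^c}v)$, so that invariance for the sum $\EE^\mathrm{s}$ decouples into invariance for each Beurling--Deny component. A lesser technical point is the normal-contraction and truncation argument underlying $\FF^\mu_\mathrm{e}=\FF_\mathrm{e}\cap L^2(\mu)$ used in the reverse inclusion.
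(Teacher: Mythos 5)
Your identification $\FF^\mathrm{s}_\mathrm{e}=\FF_\mathrm{e}\cap L^2(E,\mu)$ and your forward inclusion coincide with the paper's (you are in fact more careful: the paper passes from $\FF^\mathrm{s}_\mathrm{e}\subset\FF_\mathrm{e}$ to membership in the right-hand side of \eqref{EQ3FSEU} without spelling out the Fatou step on the jump energies). For the reverse inclusion the paper argues directly on $\EE^\mathrm{s}$ in two steps --- bounded $u\in\FF_\mathrm{e}$ via a uniformly bounded, q.e.-convergent approximating sequence and bounded convergence, then truncation $v_l=(-l)\vee v\wedge l$ plus \cite[Theorem~1.1.12]{FOT11} --- whereas you route through $\FF^\mu_\mathrm{e}\subseteq\FF^\mathrm{s}_\mathrm{e}$ and the formula $\FF^\mu_\mathrm{e}=\FF_\mathrm{e}\cap L^2(\mu)$. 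Since that formula is itself proved by exactly the same bounded-then-truncate argument, the two routes are equivalent in substance; yours has the mild advantage of letting the chain $\EE\leq\EE^\mathrm{s}\leq\EE^\mu$ do all the work. For the global properties the paper simply cites \cite[Theorems~2.1.8, 2.1.9 and 5.2.16]{CF12} and \cite[Theorem~4.6.4]{FOT11}; your arguments for (1), (2) and the recurrent case of (3) are precisely the content of those citations and are correct.

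The one soft spot is the local case of (3), which you flag yourself. Two points. First, the upgrade from ``$1_A$ preserves $\FF^\mathrm{s}$'' to ``$A$ is $\EE$-invariant'' is not automatic, but it is repairable: since $\mu$ is finite, $\FF_\mathrm{b}=\FF^\mathrm{s}_\mathrm{b}$, so $1_Au\in\FF$ already for every bounded $u\in\FF$; for general $u\in\FF$ truncate, note $\sup_l\EE_1(1_Au_l,1_Au_l)\leq\EE(u,u)+\|u\|^2_{L^2(m)}<\infty$, and use the closedness of $(\EE,\FF)$. Second, your sentence that the invariance identity ``forces $\EE(1_Au,1_{A^c}v)=0$ on its own'' is logically backwards: locality gives $\EE(1_Au,1_{A^c}v)=0$ unconditionally (no jumping part, killing part vanishes on disjoint supports, and the strongly local energy measure of a function does not charge the set where it vanishes), and it is this vanishing --- together with $1_A\FF\subset\FF$ --- that makes $A$ an $\EE$-invariant set; the $\EE^\mathrm{s}$-invariance identity then only yields the extra relation $\langle 1_Au,\mu\rangle\langle 1_{A^c}v,\mu\rangle=0$, which you do not need. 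With these repairs the local case closes, so the defect is one of exposition rather than of substance.
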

\begin{proof}
Since $\FF^\mathrm{s}\subset \FF$ and $\EE^\mathrm{s}(u,u)\geq \EE(u,u)$ for any $u\in \FF^\mathrm{s}$, it follows from the definition of extended Dirichlet space that $\FF^\mathrm{s}_\mathrm{e}\subset \FF_\mathrm{e}$. Thus $\FF^\mathrm{s}_\mathrm{e}$ is a subset of the right side of \eqref{EQ3FSEU}. A first step towards to the contrary is to assume $u\in\FF_\mathrm{e}$ is bounded. By \cite[Theorem~2.3.4]{CF12}, we can take an approximation sequence $\{u_n\}\subset \FF$ of uniformly bounded $\EE$-quasi continuous functions for $u$. Without loss of generality, we may assume $\{u_n\}$ is $\EE$-Cauchy and $u_n$ converges to $u$, $\EE$-q.e. Since
\[
\begin{aligned}
	\lim_{n\rightarrow\infty}\int_{E\times E} &\left((u-u_n)(x)- (u-u_n)(y)\right)^2\mu(dx)\mu(dy) \\
	&=\int_{E\times E}\lim_{n\rightarrow\infty} \left((u-u_n)(x)- (u-u_n)(y)\right)^2\mu(dx)\mu(dy)=0
\end{aligned}
\]
by the bounded convergence theorem, we can deduce that $\{u_n\}$ is also $\EE^\mathrm{s}$-Cauchy.  Thus $u\in \FF^\mathrm{s}_\mathrm{e}$. Now take an arbitrary function $v$ in the right side of \eqref{EQ3FSEU}. For any $l\in \mathbb{N}$, set $v_l:=(-l)\vee v\wedge l$. Then $v_l\in \FF^\mathrm{s}_\mathrm{e}$. On the other hand,
\[
\begin{aligned}
\EE^\mathrm{s}(v_l,v_l)&=\EE(v_l,v_l)+\int_{E\times E}\left(v_l(x)-v_l(y)\right)^2\mu(dx)\mu(dy)\\
&\leq \EE(v,v)+\int_{E\times E}\left(v(x)-v(y)\right)^2\mu(dx)\mu(dy) \\
&<\infty.
\end{aligned}\] 
This implies $\sup_{l}\EE^\mathrm{s}(v_l,v_l)<\infty$. By \cite[Theorem~1.1.12]{FOT11}, we can conclude $v\in \FF^\mathrm{s}_\mathrm{e}$. 

The first and second assertions about the global properties of $(\EE^\mathrm{s},\FF^\mathrm{s})$ can be directly deduced from \cite[Theorem~2.1.8]{CF12} and \cite[Theorem~2.1.9]{CF12}. The final assertion is implied by \cite[Theorem~5.2.16]{CF12} and \cite[Theorem~4.6.4]{FOT11}.
\end{proof}
\begin{remark}
If $(\EE,\FF)$ is not irreducible, then the transience of $(\EE^\mathrm{s},\FF^\mathrm{s})$ is not sufficient for that of $(\EE,\FF)$, see Example~\ref{EXA313}.
The converse of third assertion does not always hold either, see Proposition~\ref{PRO310}. 
\end{remark}

\subsection{SNOB from approach of Dirichlet forms}

Let us turn to the snapping out Brownian motion by means of Dirichlet forms. Let $E=\mathbb{G}$ and $m$ be the Lebesgue measure on $\mathbb{G}$, i.e. $m_-:=m|_{\mathbb{G}_-}$ and $m_+:=m|_{\mathbb{G}_+}$ are the Lebesgue measures on $\mathbb{G}_-$ and $\mathbb{G}_+$ respectively. As mentioned in Remark~\ref{RM33}, the two-sided reflecting Brownian motion $(R_t)_{t\geq 0}$ on $\mathbb{G}$ is $m$-symmetric and clearly its Dirichlet form is
\begin{equation}\label{EQ3FUL}
\begin{aligned}
 & \FF=\left\{u\in L^2(\mathbb{G},m): 
 u_+\in H^1\left(\mathbb{G}_+\right), ~u_-\in H^1\left(\mathbb{G}_-\right)\right\},\\
 & \EE(u,v)=\frac{1}{2}\int_{-\infty}^{0-}u'(x)v'(x)dx+\frac{1}{2}\int_{0+}^{\infty}u'(x)v'(x)dx,\quad u,v\in \FF,
 \end{aligned}
\end{equation}
where $u_+:=u|_{\mathbb{G}_+}$, $u_-:=u|_{\mathbb{G}_-}$ and $H^1$ denotes the 1-Sobolev space, i.e.
\[
\begin{aligned}
	&H^1\left(\mathbb{G}_-\right)  :=\{u\in L^2\left(\mathbb{G}_-\right): 
	 u\text{ is absolutely continuous on }\mathbb{G}_- \text{ and }u'\in  L^2\left(\mathbb{G}_-\right) \}, \\
	& H^1\left(\mathbb{G}_+\right)  :=\{u\in L^2\left(\mathbb{G}_+\right): 
		 u\text{ is absolutely continuous on }\mathbb{G}_+ \text{ and }u'\in  L^2\left(\mathbb{G}_+\right) \}.
\end{aligned}
\]
Though every function in $H^1\left(\mathbb{G}_-\right)$ (resp. $H^1\left(\mathbb{G}_+\right)$) is well defined at $0-$ (resp. $0+$), $u\in \FF$ is not necessarily such that $u(0+)=u(0-)$. For $u,v\in \FF$, write
\[
	\int_{\mathbb{G}} u'(x)v'(x)dx:=\int_{-\infty}^{0-}u'(x)v'(x)dx+\int_{0+}^{\infty}u'(x)v'(x)dx
\] 
for convenience. The following proposition contributes to the understanding of SNOB. 

\begin{proposition}\label{PRO310}
Let $R$ be the two-sided reflecting Brownian motion on $\mathbb{G}$ associated with the Dirichlet form \eqref{EQ3FUL}. Then the SNOB $Y=(Y_t)_{t\geq 0}$ is the snapping out Markov process with respect to $R$ and $\mu:=\frac{\kappa}{2}\left(\delta_{\{0+\}}+\delta_{\{0-\}}\right)$ with $\kappa$ being the parameter in Definition~\ref{DEF31}. Furthermore, the following assertions hold:
\begin{itemize}
\item[(1)] The Dirichlet form of SNOB on $L^2(\mathbb{G},m)$ is regular and given by
\begin{equation}\label{EQ3FSUL}
\begin{aligned}
 & \FF^\mathrm{s}=\left\{u\in L^2(\mathbb{G},m): 
 u_+\in H^1\left(\mathbb{G}_+\right), ~u_-\in H^1\left(\mathbb{G}_-\right)\right\}, \\
 &\EE^\mathrm{s}(u,v)=
 \frac{1}{2}\int_\mathbb{G} u'(x)v'(x)dx+\dfrac{\kappa}{4}(u(0+)-u(0-))(v(0+)-v(0-)),\quad u,v\in \FF^\mathrm{s}.
 \end{aligned}
\end{equation}
\item[(2)] The extended Dirichlet space $\FF^\mathrm{s}_\mathrm{e}$ of $(\EE^\mathrm{s},\FF^\mathrm{s})$ is identified with that of $(\EE,\FF)$, i.e. 
\begin{equation}\label{EQ3FSE}
\begin{aligned}
\FF^\mathrm{s}_\mathrm{e}=\{u: 
u_+,& u_-\mbox{ are absolutely continuous on}~\mathbb{G}_+\\
&\qquad \qquad \qquad \mbox{and}~\mathbb{G}_- ~\mbox{respectively}, ~\EE(u,u)<\infty\}.
\end{aligned}
\end{equation}
\item[(3)] $(\EE^\mathrm{s},\FF^\mathrm{s})$ is irreducible and recurrent. Particularly, for any $x,y\in \mathbb{G}$, 
\begin{equation}\label{EQ3PXS}
	\mathbf{P}_x(\sigma_y<\infty)=1,
\end{equation}
where $\sigma_y:=\inf\{t>0: Y_t=y\}$ is the hitting time of $\{y\}$ relative to the SNOB. 
\item[(4)] The $\sigma$-finite symmetric measure of $Y$ is unique up to a constant, in other words, if another non-trivial $\sigma$-finite measure $\tilde{m}$ on $\mathbb{G}$ is  such that $Y$ is also $\tilde{m}$-symmetric, then $\tilde{m}=c\cdot m$ for some constant $c>0$.
\item[(5)] Let $f\in L^1(\mathbb{G},m)$ be Borel measurable. Then it holds $\mathbb{P}_x$-a.s. for any $x\in \mathbb{G}$ that
\[
	\lim_{t\rightarrow \infty}\frac{1}{t}\int_0^t f(Y_u)du=0. 
\]  
\end{itemize}
\end{proposition}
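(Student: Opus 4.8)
The plan is to read most assertions off the general results already established—Theorem~\ref{THM35}, Proposition~\ref{PRO38} and Corollary~\ref{COR36}—after first identifying $Y$ as a snapping out Markov process, and to handle the two genuinely model-specific points (irreducibility and the ergodic statement) by hand. I would begin with the identification: by Remark~\ref{RM33} the two-sided elastic Brownian motion $Z$ is exactly the subprocess $R^\mu$ of $R$ killed by $\mu=\frac{\kappa}{2}(\delta_{\{0+\}}+\delta_{\{0-\}})$, and $\mu^{\#}=\mu/|\mu|=\frac{1}{2}(\delta_{\{0+\}}+\delta_{\{0-\}})$, so Definition~\ref{DEF31} presents the SNOB as the piecing out of $R^\mu$ with instantaneous distribution $\mu^{\#}$—which is precisely the snapping out Markov process attached to $R$ and $\mu$ in the sense of Definition~\ref{DEF32}. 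Since $R$ is a reflecting Brownian motion it has no killing inside, so Theorem~\ref{THM35} yields that $Y$ is $m$-symmetric with regular Dirichlet form \eqref{EQ3FSU}. For (1) I would specialize \eqref{EQ3FSU} to this two-point $\mu$: with $|\mu|=\kappa$ the double integral collapses to a sum over $\{0+,0-\}^2$, only the two off-diagonal terms survive, and $\frac{1}{2|\mu|}$ times them equals $\frac{\kappa}{4}(u(0+)-u(0-))(v(0+)-v(0-))$; the membership condition $\int(u(x)-u(y))^2\mu(dx)\mu(dy)=\frac{\kappa^2}{2}(u(0+)-u(0-))^2<\infty$ is automatic for $u\in\FF$, so $\FF^\s=\FF$ and \eqref{EQ3FSUL} follows.

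Part (2) is the analogous specialization of \eqref{EQ3FSEU}: the finiteness condition there is again automatic because $\mu$ is carried by two points and every $u\in\FF_\mathrm{e}$ has finite traces at $0\pm$, whence $\FF^\s_\mathrm{e}=\FF_\mathrm{e}$. It then remains to identify $\FF_\mathrm{e}$ for the two-sided reflecting Brownian motion, which is the standard extended Dirichlet space of the recurrent form \eqref{EQ3FUL}: functions absolutely continuous on each of $\mathbb{G}_\pm$ with $\EE(u,u)<\infty$ and no $L^2$ constraint, i.e.\ \eqref{EQ3FSE}. Recurrence in (3) is then immediate from Proposition~\ref{PRO38}(1), since $1\in\FF_\mathrm{e}$ with $\EE(1,1)=0$ shows $(\EE,\FF)$ is recurrent.

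The crux of the proposition is the irreducibility in (3), since $(\EE,\FF)$ itself is \emph{not} irreducible (it splits along $\mathbb{G}_+$ and $\mathbb{G}_-$), so Proposition~\ref{PRO38}(3) is unavailable and the jump term must be shown to reconnect the two halves. I would argue at the level of invariant sets. If $A$ is $\EE^\s$-invariant then $1_A u\in\FF^\s=\FF$ for all $u$, which forces $A$, modulo $m$-null sets, to be a union of the two disconnected pieces, i.e.\ $A\in\{\emptyset,\mathbb{G}_+,\mathbb{G}_-,\mathbb{G}\}$ (an interior boundary point would destroy absolute continuity of $1_A u$ for some $u$). For such $A$ the local part $\EE(1_A u,1_{A^c}u)$ vanishes by disjointness of supports on the two components, so invariance reduces to vanishing of $\frac{\kappa}{4}(1_A u(0+)-1_A u(0-))(1_{A^c}u(0+)-1_{A^c}u(0-))$ for all $u$. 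Taking $A=\mathbb{G}_+$ this equals $-\frac{\kappa}{4}u(0+)u(0-)$, not identically zero, and likewise for $\mathbb{G}_-$; hence only $\emptyset$ and $\mathbb{G}$ are invariant and $(\EE^\s,\FF^\s)$ is irreducible. Given irreducibility and recurrence, \eqref{EQ3PXS} follows because singletons are non-$\EE^\s$-polar—by Corollary~\ref{COR36} the polar sets coincide with those of $R$, and points are non-polar for the one-dimensional reflecting motion—so an irreducible recurrent process hits $\{y\}$ a.s.\ from q.e.\ $x$, which I would promote to every $x$ using the diffusive (Feller) structure away from $0$.

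Finally, (4) and (5) are consequences of irreducible recurrence. For (4) I would show that any other symmetric $\tilde m$ is mutually absolutely continuous with $m$ (an intermediate-mass invariant set would contradict irreducibility) and that the density $h=d\tilde m/dm$ is $\EE^\s$-invariant, hence $m$-a.e.\ constant, so $\tilde m=c\,m$. For (5), the key observation is $m(\mathbb{G})=\infty$, i.e.\ $Y$ is null recurrent. Fixing $g\in L^1_+(\mathbb{G},m)$ with $g>0$ and $\int g\,dm=1$, I would time-change by the PCAF $S_t=\int_0^t g(Y_u)\,du$, whose Revuz measure $g\cdot m$ is a probability measure, so the time-changed process is positive recurrent (time change preserving irreducibility and recurrence); the pointwise ergodic theorem, in its extension to nonnegative, possibly non-integrable integrands, applied to $1/g$ gives $\tau_s/s\to\int(1/g)\,d(g\,m)=m(\mathbb{G})=\infty$, i.e.\ $S_t/t\to0$ a.s. Then for $f\in L^1_+$ the ratio ergodic theorem $\int_0^t f(Y_u)\,du/S_t\to\int f\,dm$ combined with $S_t/t\to0$ yields $\frac1t\int_0^t f(Y_u)\,du\to0$, and the general case follows by splitting $f=f^+-f^-$. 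The main obstacle throughout is the irreducibility step, where the nonlocal jump term must be shown to override the disconnection built into $R$; the null-recurrent bookkeeping $\tau_s/s\to\infty$ in (5) is the other place demanding care.
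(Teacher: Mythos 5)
Your proposal is correct in substance and shares the paper's overall skeleton: identify $Y$ as the snapping out process of $R$ and $\mu=\frac{\kappa}{2}(\delta_{\{0+\}}+\delta_{\{0-\}})$ via Remark~\ref{RM33}, then read (1) and (2) off Theorem~\ref{THM35} and Proposition~\ref{PRO38}, exactly as the paper does. Where you diverge is in the remaining items. For irreducibility the paper argues through the extended Dirichlet space: it notes $1\in\FF^\s_\e$ with $\EE^\s(1,1)=0$ to get recurrence, observes that $\EE^\s(u,u)=0$ for $u\in\FF^\s_\e$ forces $u$ constant (the jump term kills the gap at $0$), and invokes the null-energy criterion for recurrent forms; your invariant-set computation with $1_{\mathbb{G}_+}u$ is instead the argument the paper deploys later for the more general Proposition~\ref{PRO313}, and it has the advantage of not needing recurrence as an input. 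For \eqref{EQ3PXS} the paper simply notes that the $\EE^\s$-polar sets coincide with the (empty) $\EE$-polar sets by Corollary~\ref{COR36}, so the ``q.e.'' in the hitting theorem is vacuous; that is cleaner than your proposed upgrade via the Feller structure, though both work. For (4) and (5) the paper cites fine irreducibility together with a uniqueness theorem of Ying--Zhao, and the ergodic theorem \cite[Theorem~4.7.3]{FOT11}, respectively; your time-change/ratio-ergodic argument for (5) is essentially a proof of that cited theorem and is sound, while your sketch for (4) (mutual absolute continuity plus invariance of the density) captures the right idea but would need the details that the citation supplies, in particular a rigorous justification that the density of one symmetric measure against the other is excessive/invariant and hence constant by irreducibility. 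None of these differences is a gap; your route is simply more self-contained where the paper leans on references.
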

\begin{proof}
The first and second assertions can be deduced directly from Theorem~\ref{THM35} and  Proposition~\ref{PRO38}. The third assertion is implied by \eqref{EQ3FSE}. In fact, it follows from \eqref{EQ3FSE} that $1\in \FF^\mathrm{s}_\mathrm{e}$ and $\EE^\mathrm{s}(1,1)=0$. Then \cite[Theorem~2.1.8]{CF12} indicates the recurrence of $(\EE^\mathrm{s},\FF^\mathrm{s})$. In addition, $\EE^\mathrm{s}(u,u)=0$ with $u\in \FF^\mathrm{s}_\mathrm{e}$ clearly implies that $u$ is constant. Then from \cite[Theorem~5.2.16]{CF12} we can obtain the irreducibility of $(\EE^\mathrm{s},\FF^\mathrm{s})$. Note that the $\mathcal{E}$-polar set has to be empty and so does the $\EE^\mathrm{s}$-polar set by Corollary~\ref{COR36}. Then \eqref{EQ3PXS} can be concluded by \cite[Theorem~4.7.1]{FOT11}. For the uniqueness of symmetric measure, it suffices to note that $Y$ is finely irreducible in the sense of \cite{YZ10} and the fourth assertion holds by \cite[Theorem~2.1]{YZ10}. The final assertion is a consequence of \cite[Theorem~4.7.3]{FOT11}. 
\end{proof}
\begin{remark}
It is worth noting that the two-sided reflecting Brownian motion $R$ on $\mathbb{G}$ is not irreducible and its symmetric measures are not unique. In fact, all the non-trivial symmetric measures of $R$ can be written as
\[
	\left\{c_1dx|_{\mathbb{G}_-}+c_2dx|_{\mathbb{G}_+}: c_1,c_2>0\right\}. 	
\]
Intuitively speaking, the snapping out method builds a `bridge' between $0+$ and $0-$ and links the two separate components of $R$, so that the SNOB becomes irreducible and its symmetric measure is unique. 
\end{remark}

We complete this subsection with an interesting link between SNOB and one-dimensional Brownian motion. For any $\beta>0$,
\[
	T_{\beta}: \mathbb{G}\rightarrow (-\infty, -\beta]\cup [\beta, \infty),\quad x\mapsto \left\lbrace
	\begin{aligned}
	&x+\beta,\quad x\in \mathbb{G}_+, \\
	&x-\beta,\quad x\in \mathbb{G}_-
	\end{aligned} \right. 
\]
denotes the homeomorphism between $\mathbb{G}$ and $(-\infty, -\beta]\cup [\beta, \infty)$. Note that $T_\beta(Y):=(T_\beta(Y_t))_{t\geq 0}$ is a Markov process on $(-\infty, -\beta]\cup [\beta, \infty)$. The following result tells us the darning of SNOB by shorting $\{0+,0-\}$ into $0$ is the one-dimensional Brownian motion, and on the contrary, the SNOB is the trace of one-dimensional Brownian motion up to a spatial transform. 

\begin{theorem}\label{THM316}
\begin{itemize}
\item[(1)] Let $Y$ be the SNOB on $\mathbb{G}$ associated with the Dirichlet form \eqref{EQ3FSUL}. By shorting $\{0+,0-\}$ into $0$, the Markov process with darning induced by $Y$ (Cf. \S\ref{D}) is nothing but the one-dimensional Brownian motion. 
\item[(2)] Let $(\frac{1}{2}\mathbf{D}, H^1(\mathbb{R}))$ be the associated Dirichlet form  of one-dimensional Brownian motion on $L^2(\mathbb{R})$. Set $F_\kappa:=(-\infty, -\kappa^{-1}]\cup [\kappa^{-1}, \infty)$ and $m_\kappa:= m|_{F_\kappa}$ with $m$ being the Lebesgue measure on $\mathbb{R}$. Then $T_{\kappa^{-1}}(Y)$ is a Markov process on $F_\kappa$ associated with the trace Dirichlet form of $(\frac{1}{2}\mathbf{D}, H^1(\mathbb{R}))$ on $F_\kappa$ with the speed measure $m_\kappa$. 
\end{itemize}
\end{theorem}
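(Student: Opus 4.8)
The plan is to prove both assertions at the level of Dirichlet forms, using the explicit form \eqref{EQ3FSUL} of the SNOB together with the transform formulas recorded in Section~\ref{SEC2}, and then to invoke the one-to-one correspondence between regular Dirichlet forms and symmetric Markov processes to identify the resulting processes.

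For assertion (1), I would take the single compact set $K:=\{0+,0-\}$ to be shorted into the point $0$. First I would verify the hypothesis of the darning transform in \S\ref{D}, namely that $K$ has positive capacity; this is immediate from Proposition~\ref{PRO310}(3), since every singleton is hit with probability one and is therefore non-polar for $Y$. Applying the darning formula \eqref{DR} to $(\EE^{\mathrm s},\FF^{\mathrm s})$, the domain $\FF^*$ consists of those $u\in\FF^{\mathrm s}$ that are constant on $K$, i.e. $u(0+)=u(0-)$. On such functions the jump term $\frac{\kappa}{4}(u(0+)-u(0-))(v(0+)-v(0-))$ in \eqref{EQ3FSUL} vanishes, so that $\EE^*(u^*,v^*)=\frac12\int_{\mathbb G}u'v'\,dx$. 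Identifying $\mathbb{G}$, after gluing $\{0+,0-\}$ to $0$, with $\mathbb{R}$, the condition $u(0+)=u(0-)$ together with $u_\pm\in H^1$ is precisely absolute continuity of $u$ across $0$, whence $\FF^*=H^1(\mathbb{R})$ and $\EE^*=\frac12\mathbf{D}$. Since $m^*$ restricts to Lebesgue measure and charges the glued point with mass zero, the darning Dirichlet form on $L^2(\mathbb{R})$ is exactly $(\frac12\mathbf{D},H^1(\mathbb{R}))$, which identifies the darning process with one-dimensional Brownian motion.

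For assertion (2), I would compute the trace Dirichlet form of $(\frac12\mathbf{D},H^1(\mathbb{R}))$ on $F_\kappa$ via \eqref{EQ2FFE}. The harmonic extension $\mathbf{H}_{F_\kappa}u$ agrees with $u$ on $F_\kappa$ and, on the gap $(-\kappa^{-1},\kappa^{-1})$, is the affine interpolation between $u(-\kappa^{-1})$ and $u(\kappa^{-1})$, because $\frac12\Delta$-harmonic functions in one dimension are affine. Splitting $\frac12\int_{\mathbb R}(\mathbf{H}_{F_\kappa}u)'^2\,dx$ into the contribution $\frac12\int_{F_\kappa}u'^2\,dx$ off the gap and, on the gap of length $2\kappa^{-1}$ where the derivative is the constant $\frac{\kappa}{2}(u(\kappa^{-1})-u(-\kappa^{-1}))$, the contribution $\frac{\kappa}{4}(u(\kappa^{-1})-u(-\kappa^{-1}))^2$, yields $\check{\EE}(u,u)=\frac12\int_{F_\kappa}u'^2\,dx+\frac{\kappa}{4}(u(\kappa^{-1})-u(-\kappa^{-1}))^2$. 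On the other hand, pushing the SNOB form \eqref{EQ3FSUL} forward by the piecewise translation $T_{\kappa^{-1}}$ — which is an isometry on each component, preserves derivatives, sends Lebesgue measure on $\mathbb{G}$ to $m_\kappa$, and carries $0\pm$ to $\pm\kappa^{-1}$ — yields exactly the same expression. These two forms coincide, and both are $m_\kappa$-symmetric, so $T_{\kappa^{-1}}(Y)$ is the asserted time-changed process.

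The main obstacle is not the energy computation, which is routine, but the careful matching of function spaces. In part (2) I must verify that the trace domain $\FF_{\mathrm e}|_{F_\kappa}\cap L^2(F_\kappa,m_\kappa)$ — where $\FF_{\mathrm e}$ is the extended Dirichlet space of one-dimensional Brownian motion, consisting of absolutely continuous functions with square-integrable derivative but \emph{no} $L^2$ constraint — coincides with the push-forward of $\FF^{\mathrm s}$ under $T_{\kappa^{-1}}$; this requires checking that restricting across the finite gap imposes no constraint beyond piecewise absolute continuity and square-integrability of the derivative on $F_\kappa$, which is exactly where the finiteness of the gap and the affine nature of $\mathbf{H}_{F_\kappa}$ enter. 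Regularity of the trace form is guaranteed by \eqref{EQ2FFE} since $m_\kappa$ is Radon. Once the forms, domains and symmetric measures are matched, the identification of the processes in both parts follows from the uniqueness of the Markov process associated with a regular Dirichlet form.
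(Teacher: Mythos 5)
Your proposal is correct and follows essentially the same route as the paper: assertion (1) by applying the darning formula \eqref{DR} and noting that the jump term vanishes on functions with $u(0+)=u(0-)$, and assertion (2) by computing the trace Dirichlet form on $F_\kappa$ via the affine harmonic extension across the gap, which yields exactly the extra term $\frac{\kappa}{4}(u(\kappa^{-1})-u(-\kappa^{-1}))^2$ appearing in the push-forward of \eqref{EQ3FSUL} under $T_{\kappa^{-1}}$. The only difference is that the paper outsources the trace computation to a cited result while you carry it out directly, and you make the domain-matching explicit; both are fine.
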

\begin{proof}
The first assertion is clear by applying \eqref{DR}. For the second assertion, let $(\check{\EE},\check{\FF})$ be the trace Dirichlet form of $(\frac{1}{2}\mathbf{D}, H^1(\mathbb{R}))$ on $F_\kappa$ with the speed measure $m_\kappa$. Clearly, 
\[
	\check{\FF}=\{f\in L^2(F_\kappa, m_\kappa): f|_{[\kappa^{-1}, \infty)}\in H^1([\kappa^{-1}, \infty)), f|_{(-\infty, -\kappa^{-1}]}\in H^1((-\infty, -\kappa^{-1}])\}. 
\]
Following the proof of \cite[Theorem~2.1]{LY17}, we can deduce that for any $f\in \check{\FF}$, 
\[
\begin{aligned}
	\check{\EE}(f,f)&=\frac{1}{2}\int_{F_\kappa} f'(x)^2dx +\frac{1}{2}\frac{\left(f(\kappa^{-1})-f(-\kappa^{-1})\right)^2}{|\kappa^{-1} - (-\kappa^{-1})|} \\
	&=\frac{1}{2}\int_{F_\kappa} f'(x)^2dx +\frac{\kappa}{4}\left(f(\kappa^{-1})-f(-\kappa^{-1})\right)^2.
\end{aligned}\]
Clearly, $T_{\kappa^{-1}}(Y)$ is associated with $(\check{\EE},\check{\FF})$. That completes the proof. 
\end{proof}

\subsection{Snapping out diffusion processes on $\mathbb{G}$}\label{SEC35}

We present a family of more general snapping out Markov processes on $\mathbb{G}$, which will be used in \S\ref{SEC4}. The symmetric measure (not necessarily the Lebesgue measure) is still denoted by $m$. Let $\mathscr{M}$ be the family of fully supported positive Radon measures on $\mathbb{G}$ charging no set of singleton. In other words, 
\begin{equation}\label{EQ3MNA}
\mathscr{M}:=\left\{\nu: \text{ a fully supported Radon measure on }\mathbb{G}\text{ and } \nu\left(\{x\}\right)=0,\forall x\in \mathbb{G}\right\}.
\end{equation}
Then $\nu \in \mathscr{M}$ indicates $0<\nu_+([a, b]), \nu_-([-b,-a])<\infty$ for $0\leq a<b$, where $\nu_\pm:=\nu|_{\mathbb{G}_\pm}$. Clearly, every $\nu\in \mathscr{M}$ induces a fully supported Radon measure on $\mathbb{R}$ charging no set of singleton. We should use the same symbol $\nu$ for it if no confusion caused.

Fix $\lambda\in \mathscr{M}$ and denote $\lambda_\pm:=\lambda|_{\mathbb{G}_\pm}$ as usual. Clearly, $\lambda_\pm$ induces a unique scale function $\ss_\pm$ on $\mathbb{G}_\pm$ such that $\ss_\pm(0\pm)=0$, in other words, 
\[
	\ss_+(x)=\lambda_+([0+,x]),\quad \ss_-(-x)=-\lambda_-([-x,0-]),\quad x\in \mathbb{G}_+. 
\] 
Denote the combination of $\ss_\pm$ by $\ss$, i.e. $\ss(x):=\ss_+(x)$ for $x\geq 0$ and $\ss(x):=\ss_-(x)$ for $x<0$. Then $\ss$ is the scale function on $\mathbb{R}$ induced by $\lambda$.

A first step towards the snapping out diffusion processes on $\mathbb{G}$ is to start with a diffusion $X$ on $\mathbb{G}$ as a union of $X^+$ and $X^-$, where $X^\pm$ is an irreducible diffusion on $\mathbb{G}_\pm$ with scale function $\ss_\pm$, speed measure $m_\pm$ and no killing inside. In other words, $X^\pm$ is given by the regular Dirichlet form on $L^2(\mathbb{G}_\pm, m_\pm)$:  (see \cite{LY172})
\[
\begin{aligned}
&\FF^\pm=\bigg\{f\in L^2(\mathbb{G}_\pm, m_\pm): f\ll \lambda_\pm, \int_{\mathbb{G}_\pm} \left(\frac{df}{d\lambda_\pm}\right)^2d\lambda_\pm<\infty, \\
&\qquad\qquad\qquad\qquad \qquad f(\pm\infty):=\lim_{x\rightarrow \pm\infty}f(x)=0 \text{ if }\lambda_\pm(\mathbb{G}_\pm)<\infty\bigg\}, \\
&\EE^\pm(f,g)=\frac{1}{2}\int_{\mathbb{G}_\pm}\frac{df}{d\lambda_\pm}\frac{dg}{d\lambda_\pm}d\lambda_\pm,\quad f,g\in \FF^\pm, 
\end{aligned}
\]
and $X$ is associated with the regular Dirichlet form on $L^2(\mathbb{G},m)$
\begin{equation}\label{EQ4FFLG}
\begin{aligned}
	\FF&=\{f\in L^2(\mathbb{G},m): f_+\in \FF^+, f_-\in \FF^-\}, \\
	\EE(f,g)&=\EE^+(f_+, g_+)+\EE^-(f_-,g_-),\quad f,g\in \FF, 
\end{aligned}
\end{equation}
where $f_\pm:=f|_{\mathbb{G}_\pm}$. Note that $(\EE,\FF)$ is not irreducible. Its extended Dirichlet space is
\[
\FF_\e=\{f: f_\pm \in \FF^\pm_\e\}, 
\]
where (Cf. \cite[Theorem~2.2.11 and (3.5.11)]{CF12})
\[
\begin{aligned}	
&\FF^\pm_\e=\bigg\{f: f\ll \lambda_\pm, \int_{\mathbb{G}_\pm} \left(\frac{df}{d\lambda_\pm}\right)^2d\lambda_\pm<\infty, f(\pm\infty)=0 \text{ if }\lambda_\pm(\mathbb{G}_\pm)<\infty\bigg\}, \\
\end{aligned}
\]

The snapping out diffusion process $X^\s$ is, by definition, the snapping out Markov process with respect to $X$ and a finite smooth measure $\mu$. The smooth measures we are interested in are those supported on $\{0+,0-\}$, in other words, 
\begin{equation}\label{EQ3MKD}
	\mu=\kappa_+\cdot \delta_{\{0+\}}+\kappa_-\cdot \delta_{\{0-\}}
\end{equation}
for some constants $\kappa_\pm>0$. By applying Theorem~\ref{THM35}, we can conclude the following result. 

\begin{proposition}\label{PRO313}
Let $\lambda\in \mathscr{M}$ and $\mu$ be in \eqref{EQ3MKD}. Then $X^\s$ is $m$-symmetric on $\mathbb{G}$ and associated with a regular Dirichlet form on $L^2(\mathbb{G},m)$
\[
\begin{aligned}
	\FF^\s &= \FF, \\
	\EE^\s(f,g)&=\EE(f,g)+\frac{\kappa_+\kappa_-}{\kappa_++\kappa_-}\left(f(0+)-f(0-)\right)\left(g(0+)-g(0-)\right),\quad f,g\in \FF. 
\end{aligned}
\] 
Its extended Dirichlet space is $\FF^\s_\e=\FF_\e$. Furthermore, the following hold: 
\begin{itemize}
\item[(1)] $(\EE^\s,\FF^\s)$ is irreducible and particularly, for any $x,y\in \mathbb{G}$,
\begin{equation}\label{EQ3PXY}
	\mathbf{P}_x(\sigma_y<\infty)>0,
\end{equation}
where $\sigma_y$ is the hitting time of $\{y\}$ relative to $X^\s$.    
\item[(2)] $(\EE^\s,\FF^\s)$ is transient, if and only if either $\lambda_+(\mathbb{G}_+)<\infty$ or $\lambda_-(\mathbb{G}_-)<\infty$. Otherwise, it is recurrent.   
\item[(3)] The $\sigma$-finite symmetric measure of $X^\s$ is unique up to a constant. 
\end{itemize}
\end{proposition}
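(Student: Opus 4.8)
The plan is to read off the form $(\EE^\s,\FF^\s)$ and its extended space from the general results of \S\ref{SEC32}, specialised to the two-point measure $\mu=\kappa_+\delta_{\{0+\}}+\kappa_-\delta_{\{0-\}}$, and then to prove the three global assertions directly. The only genuinely new point is the irreducibility in (1): since the underlying form $(\EE,\FF)$ in \eqref{EQ4FFLG} splits as $\EE^+\oplus\EE^-$ and is \emph{not} irreducible, Proposition~\ref{PRO38}(3) is unavailable and the coupling produced by the snapping-out jump must be exploited by hand.

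First I would check the hypotheses of Theorem~\ref{THM35}. The union $X=X^+\cup X^-$ has no killing inside, and $\mu$ is finite and smooth, as it charges only the two points $0\pm$, which are regular, hence non-polar, boundary points of the half-line diffusions. Theorem~\ref{THM35} then gives that $X^\s$ is $m$-symmetric with regular Dirichlet form \eqref{EQ3FSU}. Because $\mu\otimes\mu$ is supported on $\{0+,0-\}^2$, the double integral collapses to a two-point sum: the two diagonal terms vanish and the two off-diagonal terms each equal $\kappa_+\kappa_-(u(0+)-u(0-))(v(0+)-v(0-))$, so with $|\mu|=\kappa_++\kappa_-$ the perturbation becomes $\tfrac{\kappa_+\kappa_-}{\kappa_++\kappa_-}(u(0+)-u(0-))(v(0+)-v(0-))$. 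The integrability condition defining $\FF^\s$ reads $2\kappa_+\kappa_-(u(0+)-u(0-))^2<\infty$, which holds for every $u\in\FF$ since $u_\pm\ll\lambda_\pm$ are continuous with finite traces $u(0\pm)$; hence $\FF^\s=\FF$. The same two-point argument applied to \eqref{EQ3FSEU} gives $\FF^\s_\e=\FF_\e$, as elements of $\FF_\e$ likewise have finite traces at $0\pm$.

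The main obstacle is irreducibility, which I would establish through invariant sets. Let $A$ be $\EE^\s$-invariant, so that $\EE^\s(\mathbf{1}_Au,\mathbf{1}_{A^c}v)=0$ for all $u,v\in\FF^\s$. Testing with $u,v$ supported in the interior of $\mathbb{G}_+$, where the jump term drops out and $\EE^\s=\EE^+$, shows $A\cap\mathbb{G}_+$ is $\EE^+$-invariant; irreducibility of $X^+$ (and likewise $X^-$) forces $A$ to agree $m$-a.e. with one of $\emptyset,\mathbb{G}_-,\mathbb{G}_+,\mathbb{G}$. To rule out $A=\mathbb{G}_+$ (and symmetrically $\mathbb{G}_-$), pick $u,v\in\FF$ with $u(0+)\neq 0$ and $v(0-)\neq 0$. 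Then $\EE(\mathbf{1}_{\mathbb{G}_+}u,\mathbf{1}_{\mathbb{G}_-}v)=0$ because the two functions live on disjoint components, whereas the jump term equals $-\tfrac{\kappa_+\kappa_-}{\kappa_++\kappa_-}u(0+)v(0-)\neq0$, contradicting invariance. Thus every invariant set is trivial and $(\EE^\s,\FF^\s)$ is irreducible. Finally, by Corollary~\ref{COR36} the $\EE^\s$-polar sets are exactly the $\EE$-polar sets, and since the component diffusions hit every point the only polar set is $\emptyset$; irreducibility then upgrades to \eqref{EQ3PXY}, i.e. $\mathbf{P}_x(\sigma_y<\infty)>0$ for \emph{every} $x,y\in\mathbb{G}$, by the hitting-probability characterisation of irreducibility in \cite{FOT11,CF12}.

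It remains to settle (2) and (3). By Proposition~\ref{PRO38}(1) (or directly, since $\FF^\s_\e=\FF_\e$ and $\EE^\s(1,1)=\EE(1,1)$), recurrence of $(\EE^\s,\FF^\s)$ is equivalent to $1\in\FF_\e$ with $\EE(1,1)=0$; for the half-line diffusions a nonzero constant lies in $\FF^\pm_\e$ exactly when $\lambda_\pm(\mathbb{G}_\pm)=\infty$, so $(\EE^\s,\FF^\s)$ is recurrent iff $\lambda_+(\mathbb{G}_+)=\lambda_-(\mathbb{G}_-)=\infty$. As $(\EE^\s,\FF^\s)$ is irreducible by (1), the transient/recurrent dichotomy for irreducible forms (\cite{CF12}) shows it is transient iff it is not recurrent, i.e. iff $\lambda_+(\mathbb{G}_+)<\infty$ or $\lambda_-(\mathbb{G}_-)<\infty$, proving (2). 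For (3), the positivity \eqref{EQ3PXY} established in (1) means $X^\s$ is finely irreducible in the sense of \cite{YZ10}, so \cite[Theorem~2.1]{YZ10} yields that the $\sigma$-finite symmetric measure is unique up to a multiplicative constant, exactly as in Proposition~\ref{PRO310}(4).
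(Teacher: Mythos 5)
Your proposal is correct and follows essentially the same route as the paper: the form and extended space are read off from Theorem~\ref{THM35} and Proposition~\ref{PRO38} using the finiteness of the traces $u(0\pm)$, irreducibility is proved by showing that an invariant set $\mathbb{G}_+$ (or $\mathbb{G}_-$) would contradict the energy decomposition because the cross term $\frac{\kappa_+\kappa_-}{\kappa_++\kappa_-}u(0+)v(0-)$ does not vanish, transience/recurrence is settled via the behaviour of constants in $\FF^\pm_\e$ together with the dichotomy for irreducible forms, and uniqueness of the symmetric measure comes from fine irreducibility and \cite[Theorem~2.1]{YZ10}. The only cosmetic difference is that you test invariance with functions supported on a single component while the paper compares $\EE^\s(f,g)$ with $\EE^\s(f_+,g_+)+\EE^\s(f_-,g_-)$ for general $f,g$; the content is identical.
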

\begin{proof}
Note that for any $f\in \FF_\e$, $f(0\pm)$ exists and is finite. Thus $\FF^\s=\FF$ and $\FF^\s_\e=\FF_\e$ by \eqref{EQ3FSU} and \eqref{EQ3FSEU}. 

Let us show the irreducibility of $(\EE^\s,\FF^\s)$. Then \eqref{EQ3PXY} is implied by the fact that every singleton is of positive capacity relative to $\EE^\s$ obtained by Corollary~\ref{COR36}. Suppose $A$ is an invariant set (Cf. \cite[\S2.1]{CF12}) of $(\EE^\s,\FF^\s)$. Then by \cite[Proposition~2.1.6]{CF12}, we may easily deduce that $A\cap \mathbb{G}_\pm$ is an invariant set of $(\EE^\pm, \FF^\pm)$. Since $(\EE^\pm, \FF^\pm)$ is irreducible, it follows that $A=\emptyset, \mathbb{G}_+, \mathbb{G}_-$ or $\mathbb{G}$. Suppose $A=\mathbb{G}_+$. By using \cite[Proposition~2.1.6]{CF12} again, we have 
\[
	\EE^\s(f,g)=\EE^\s(f_+, g_+)+\EE^\s(f_-, g_-)
\]
for any $f,g\in \FF^\s$. However, the right-hand side is equal to
\[
\EE^+(f_+,g_+)+\EE^-(f_-,g_-)+\frac{\kappa_+\kappa_-}{\kappa_++\kappa_-}\left(f(0+)g(0+)+f(0-)g(0-)\right)\neq \EE^\s(f,g)
\]
for $f,g$ satisfy $f(0-)g(0+)+f(0+)g(0-)\neq 0$. This leads to $A\neq \mathbb{G}_+$. Similarly, we can obtain $A\neq \mathbb{G}_-$ and therefore, $A=\emptyset$ or $\mathbb{G}$. 

Next, we prove the second assertion. For the sufficiency of transience, there is no loss of generality in assuming $\lambda_+(\mathbb{G}_+)<\infty$. Suppose $f\in \FF^\s_\e$ with $\EE^\s(f,f)=0$. This implies $f_\pm \in \FF^\pm_\e$, $\EE^\pm(f_\pm, f_\pm)=0$ and $f(0+)=f(0-)$. It follows from $\lambda_+(\mathbb{G}_+)<\infty$ that $(\EE^+,\FF^+)$ is transient. Hence $f_+=0$. Moreover, $\EE^-(f_-,f_-)=0$ indicates $f_-$ is constant on $\mathbb{G}_-$. Then $f(0+)=f(0-)$ tells us $f=0$ on $\mathbb{G}$. To the contrary, we need only note if $\lambda_\pm(\mathbb{G}_\pm)=\infty$, then $(\EE^\pm,\FF^\pm)$ is recurrent by \cite[Theorem~2.2.11]{CF12} and thus $(\EE^\s,\FF^\s)$ is also recurrent by Proposition~\ref{PRO38}. 

The final assertion can be obtained by mimicking the proof of Proposition~\ref{PRO310}. That completes the proof. 
\end{proof}
\begin{remark}
By shoring $\{0+,0-\}$ into $0$, the darning transform on $X^\s$ leads to an irreducible diffusion on $\mathbb{R}$ with scale function $\ss$, speed measure $m$ and no killing inside.  
\end{remark}

We complete this subsection with several concrete examples. The first example sheds light on the significance of symmetric measure $m$ in the snapping out method. 

\begin{example}\label{EXA311}
Let us consider the two-sided reflecting Brownian motion $R$ on $\mathbb{G}$ but take a different symmetric measure $\tilde{m}(dx):=2(1-\alpha)\cdot dx|_{\mathbb{G}_-}+2\alpha\cdot dx|_{\mathbb{G}_+}$ with a constant $0<\alpha<1$. Its Dirichlet form on $L^2(\mathbb{G},\tilde{m})$ is written as
\begin{equation}\label{EQ3FULG}
\begin{aligned}
 & \FF=\left\{u\in L^2(\mathbb{G},\tilde{m}): 
 u_+\in H^1\left(\mathbb{G}_+\right), ~u_-\in H^1\left(\mathbb{G}_-\right)\right\},\\
 & \EE(u,v)=(1-\alpha)\int_{-\infty}^{0-}u'(x)v'(x)dx+\alpha\int_{0+}^{\infty}u'(x)v'(x)dx,\quad u,v\in \FF.
 \end{aligned}
\end{equation}
Let $\mu=\frac{\kappa}{2}\left(\delta_{\{0+\}}+\delta_{\{0-\}}\right)$. Note that the killing transforms of \eqref{EQ3FULG} and \eqref{EQ3FUL} induced by the same measure $\mu$ are different, since the PCAFs of $\mu$  are different with respect to different symmetric measures. 

The snapping out Markov process $\tilde{Y}$ with respect to \eqref{EQ3FULG} and $\mu$ is also $\tilde{m}$-symmetric and its associated regular Dirichlet form on $L^2(\mathbb{G},\tilde{m})$ is
\[
\begin{aligned}
  \FF^\mathrm{s}&=\FF, \\
 \EE^\mathrm{s}(u,v)&=
 \EE(u,v)+\dfrac{\kappa}{4}(u(0+)-u(0-))(v(0+)-v(0-)),\quad u,v\in \FF^\mathrm{s}.
 \end{aligned}
\]
It is is irreducible and recurrent by Proposition~\ref{PRO313}. The symmetric measure of $\tilde{Y}$ is unique up to a constant. Particularly, if $\alpha\neq 1/2$, then $\tilde{Y}$ is not symmetric with respect to the Lebesgue measure on $\mathbb{G}$. 
\end{example}

The next example gives the so-called \emph{$\alpha$-skew SNOB}. 

\begin{example}
Let $(\EE,\FF)$ be the regular Dirichlet form \eqref{EQ3FULG} of $R$ on $L^2(\mathbb{G},\tilde{m})$. Take another smooth measure $\mu_\alpha:=(1-\alpha)\kappa \delta_{\{0-\}}+\alpha\kappa \delta_{\{0+\}}$. The Dirichlet form of snapping out Markov process with respect to \eqref{EQ3FULG} and $\mu_\alpha$ is 
\begin{equation}\label{EQ3FSFE}
\begin{aligned}
  \FF^\mathrm{s}&=\FF, \\
 \EE^\mathrm{s}(u,v)&=
 \EE(u,v)+\alpha(1-\alpha)\kappa(u(0+)-u(0-))(v(0+)-v(0-)),\quad u,v\in \FF^\mathrm{s}.
 \end{aligned}
\end{equation}
We call this snapping out Markov process the $\alpha$-skew SNOB and denote it by $Y^\alpha $. This name follows the so-called $\alpha$-skew Brownian motion in \cite{HS81}. Indeed, after shorting $\{0+,0-\}$ into $0$ and applying the darning transform to \eqref{EQ3FSFE}, we can obtain the associated Dirichlet form of $\alpha$-skew Brownian motion.  Particularly, when $\alpha=1/2$, the $\alpha$-skew SNOB is nothing but the SNOB. 

Mimicking \cite[Proposition~1]{L16}, we can deduce that $Y^\alpha$ is related to the heat equation \eqref{EQ3TVT} and the condition of discontinuous flux at $0$: 
\[
\begin{aligned}
&\alpha \nabla u(t,0+)=(1-\alpha)\nabla u(t,0-),\\
&(1-\alpha)\kappa (u(t,0+)-u(t,0-))=\nabla u(t,0+).
\end{aligned}
\]
See \S\ref{SEC44} for more considerations about this boundary condition. 
\end{example}

Another example below shows that the transience of $(\EE^\mathrm{s},\FF^\mathrm{s})$ is not sufficient for that of $(\EE,\FF)$ if $(\EE,\FF)$ is not irreducible.

\begin{example}\label{EXA313}
In Proposition~\ref{PRO313}, take $\ss_-(x)=x$ and $\ss_+(x):=1-\mathrm{e}^{-x}$. Then $X^-$ is recurrent, while $X^+$ is transient by \cite[Theorem~2.2.11]{CF12}. Thus $(\EE,\FF)$ is neither transient nor recurrent. However, since $\lambda_+(\mathbb{G}_+)<\infty$, we know that $(\EE^\s,\FF^\s)$ is transient by Proposition~\ref{PRO313}. 
\end{example}

\subsection{Other examples}\label{SEC34}

Two more examples of snapping out Markov processes are presented below. The first one is based on a diffusion on $\mathbb{R}$, which consists of a countable set of separate reflecting Brownian motions. 

\begin{example}
Let $K$ be the standard Cantor set and write $K^c$ as a union of disjoint open intervals:
\[
	K^c=\cup_{n\geq 1} (a_n,b_n),
\]
where $(a_1,b_1)=(1,\infty)$ and $(a_2,b_2)=(-\infty,0)$. 
We use the conventions $[a_1,b_1]:=[1,\infty)$ and $[a_2,b_2]=(-\infty,0]$ for convenience. For each $n\geq 1$, denote the associated Dirichlet form on $L^2([a_n,b_n])$ of reflecting Brownian motion on $[a_n,b_n]$ by $(\EE^n,\FF^n)$. Set
\[
\begin{aligned}
	\FF&=\{u\in L^2(\mathbb{R}):u|_{[a_n,b_n]}\in \FF^n, n\geq 1\}, \\
	\EE(u,v)&=\sum_{n\geq 1}\EE^n(u|_{[a_n,b_n]}, v|_{[a_n,b_n]}),\quad u,v\in \FF.
\end{aligned}
\]
Then $(\EE,\FF)$ is a regular Dirichlet form on $L^2(\mathbb{R})$ due to \cite{LY172}. Note that $\mathbb{R}\setminus \cup_{n\geq 1}[a_n,b_n]$ is $\EE$-polar and $\{x\}$ is of positive capacity for any $x\in \cup_{n\geq 1}[a_n,b_n]$. Roughly speaking, the associated Markov process of $(\EE,\FF)$ is a disjoint union of countable reflecting Brownian motions.

Let $\mu$ be a smooth probability measure on $\mathbb{R}$, in other words, $\mu\left(\mathbb{R}\setminus \cup_{n\geq 1}[a_n,b_n]\right)=0$. Assume that
\[
	\mu_n:=\mu([a_n,b_n])>0,\quad \forall n\geq 1. 
\]
For example, 
\[
	\mu= \frac{1}{4}\left(\delta_{\{0\}}+\delta_{\{1\}}\right)+\sum_{n\geq 3}\frac{1}{2^n} \left(\delta_{\{a_n\}}+\delta_{\{b_n\}}\right).
\]
Then the snapping out Markov process with respect to $(\EE,\FF)$ and $\mu$ is irreducible and recurrent. This fact can be attained by mimicking the proof of Proposition~\ref{PRO310} and we omit its details. Intuitively speaking, if $\mu_n, \mu_m>0$, then the snapping out method builds a `bridge' between $[a_n,b_n]$ and $[a_m,b_m]$ by additional jumps.
\end{example}

The next example starts with a pure-jump process on $\mathbb{G}$. 

\begin{example}
Consider a regular Dirichlet form $(\mathcal{B},\mathcal{W})$ on $L^2(\mathbb{G}_+)=L^2([0+,\infty))$ for $1<\alpha<2$: 
\[
\begin{aligned}
 \mathcal{W} &=\left\{u\in L^2(\mathbb{G}_+): |u|<\infty~\mbox{a.e.}, \mathcal{B}(u,u)<\infty \right\},\\
  \mathcal{B}(u,v)&=c\int_{\mathbb{G}_+\times \mathbb{G}_+\setminus d_+}\dfrac{(u(x)-u(y))(v(x)-v(y))}{|x-y|^{1+\alpha}}dxdy,\quad  u,v\in \mathcal{W},
 \end{aligned}
\]
where $d_+$ is the diagonal of $\mathbb{G}_+\times \mathbb{G}_+$ and $c>0$ is a constant depending on $\alpha$ (see \cite{BBC03}). The associated process is called the \emph{reflecting $\alpha$-stable process} on $\mathbb{G}_+$. It is irreducible and recurrent, and every singleton is of positive capacity. We refer to \cite{BBC03} for more details about these facts. Mimicking the two-sided reflecting Brownian motion on $\mathbb{G}$, we extend the reflecting $\alpha$-stable process to a two-sided one $X=(X_t)_{t\geq 0}$ on $\mathbb{G}=\mathbb{G}_+\cup \mathbb{G}_-$ by symmetry. Namely, $X$ is given by the regular Dirichlet form on $L^2(\mathbb{G},m)$ ($m$ is the Lebesgue measure on $\mathbb{G}$) as follows: 
\[
\begin{aligned}
  \FF&=\left\{u\in L^2(\mathbb{G},m): |u|<\infty~\mbox{a.e.}, \EE(u,u)<\infty \right\},\\
 \EE(u,v)&=c\int_{\left(\mathbb{G}_+\times \mathbb{G}_+\right)\cup \left(\mathbb{G}_-\times \mathbb{G}_-\right)\setminus d}\dfrac{(u(x)-u(y))(v(x)-v(y))}{|x-y|^{1+\alpha}}dxdy,\quad u,v\in \FF,
 \end{aligned}
\]
where $d$ is the diagonal of $\left(\mathbb{G}_+\times \mathbb{G}_+\right)\cup \left(\mathbb{G}_-\times \mathbb{G}_-\right)$. 
Clearly, $(\EE,\FF)$ is recurrent but not irreducible.  

Take $\mu=\frac{1}{2}(\delta_{\{0+\}}+\delta_{\{0-\}})$, which is a smooth probability measure with respect to $(\EE,\FF)$. The snapping out Markov process with respect to $X$ and $\mu$ is denoted by $X^\s$ and we call it the \emph{snapping out $\alpha$-stable process}. Its associated Dirichlet form is
\[
\begin{aligned}
 \FF^\s&=\left\{u\in L^2(\mathbb{G},m): |u|<\infty~\mbox{a.e.}, \EE^\s(u,u)<\infty \right\},\\
 \EE^\s(u,v)&=c\int_{\left(\mathbb{G}_+\times \mathbb{G}_+\right)\cup \left(\mathbb{G}_-\times \mathbb{G}_-\right)\setminus d}\dfrac{(u(x)-u(y))(v(x)-v(y))}{|x-y|^{1+\alpha}}dxdy\\
 &\qquad \qquad \qquad +\dfrac{1}{4}\left(u(0+)-u(0-)\right)\left(v(0+)-v(0-)\right),\quad u,v\in \FF^\s. 
  \end{aligned}
\]
Clearly, $X^\s$ is also a pure-jump process and mimicking the proof of Proposition~\ref{PRO310}, we can conclude that $X^\s$ is irreducible and recurrent. 
\end{example}

\section{Stiff problems in one-dimensional space}\label{SEC4}

This section is devoted to explore the stiff problem in $\mathbb{R}$ via Dirichlet forms. We shall first introduce the Mosco convergence of Dirichlet forms. It will be used in \S\ref{SEC43} to build a phase transition of stiff problem as the length of the normal barrier decreases to zero. Then in \S\ref{SEC42} we shall give three Markov processes on $\mathbb{G}$ or $\mathbb{R}$, which are the probabilistic counterparts of thermal conductions in stiff problem. In what follows, the general stiff problem in one-dimensional space will be phrased and solved. 

\subsection{Mosco convergence of Dirichlet forms}\label{SEC41}

Mosco convergence raised in \cite{U94} is a kind of convergence for closed forms. We shall write down its specific definition for handy reference. Let $(\EE^n,\FF^n)$ be a sequence of closed forms on a same Hilbert space $L^2(E,m)$, and $(\EE,\FF)$ be another closed form on $L^2(E,m)$. We always extend the domains of $\EE$ and $\EE_n$ to $L^2(E,m)$ by letting
\[
\begin{aligned}
	\EE(u,u)&:=\infty, \quad u\in L^2(E,m)\setminus \FF, \\ 
	\EE^n(u,u)&:=\infty,\quad u\in L^2(E,m)\setminus \FF^n.
\end{aligned}
\]
In other words, $u\in \FF$ (resp. $u\in \FF^n$) if and only if $\EE(u,u)<\infty$ (resp. $\EE^n(u,u)<\infty$). 
Furthermore, we say $u_n$ converges to $u$ weakly in $L^2(E,m)$, if for any $v\in L^2(E,m)$, $(u_n,v)_m\rightarrow (u,v)_m$ as $n\rightarrow \infty$, and strongly in $L^2(E,m)$, if $\|u_n-u\|_{L^2(E,m)}\rightarrow \infty$. 

\begin{definition}\label{DEF41}
Let $(\EE^n,\FF^n)$ and $(\EE,\FF)$ be given above. Then $(\EE^n,\FF^n)$ is said to be convergent to $(\EE,\FF)$ in the sense of Mosco, if
\begin{itemize}
\item[(1)] For any sequence $\{u_n:n\geq 1\}\subset L^2(E,m)$ that converges weakly to $u$ in $L^2(E,m)$, it holds that
\[
	\EE(u,u)\leq \liminf_{n\rightarrow \infty}\EE^n(u_n,u_n). 
\]
\item[(2)] For any $u\in L^2(E,m)$, there exists a sequence $\{u_n:n\geq 1\}\subset L^2(E,m)$ that converges strongly to $u$ in $L^2(E,m)$ such that
\[
	\EE(u,u)\geq \limsup_{n\rightarrow \infty}\EE^n(u_n,u_n). 
\]
\end{itemize}
\end{definition} 

Let $(T^n_t)_{t\geq 0}$ and $(T_t)_{t\geq 0}$ be the semigroups of $(\EE^n, \FF^n)$ and $(\EE,\FF)$ respectively, and $(G^n_\alpha)_{\alpha>0}, (G_\alpha)_{\alpha>0}$ be their corresponding resolvents.  The following result is well-known (Cf. \cite{U94}).

\begin{proposition}\label{PRO42}
Let $(\EE^n, \FF^n), (\EE, \FF)$ be above. Then the following are equivalent:
\begin{itemize}
\item[(1)] $(\EE^n,\FF^n)$ converges to $(\EE,\FF)$ in the sense of Mosco;
\item[(2)] for every $t>0$ and $f\in L^2(E,m)$,  $T^n_tf$ converges to $T_tf$ strongly in $L^2(E,m)$; 
\item[(3)] for every $\alpha>0$ and $f\in L^2(E,m)$,  $G^n_\alpha f$ converges to $G_\alpha f$ strongly in $L^2(E,m)$. 
\end{itemize}
\end{proposition}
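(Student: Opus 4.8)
The plan is to establish the cycle by proving $(1)\Rightarrow(3)$, $(2)\Leftrightarrow(3)$ and $(3)\Rightarrow(1)$, working throughout in the self-adjoint operator picture: each nonnegative closed form $(\EE^n,\FF^n)$ (resp. $(\EE,\FF)$) determines a unique nonnegative self-adjoint operator $A_n$ (resp. $A$) on $L^2(E,m)$ with $G^n_\alpha=(\alpha+A_n)^{-1}$, $T^n_t=\mathrm{e}^{-tA_n}$ (resp. $G_\alpha=(\alpha+A)^{-1}$, $T_t=\mathrm{e}^{-tA}$). Because these are Dirichlet forms, every $T^n_t$, $T_t$, $\alpha G^n_\alpha$ and $\alpha G_\alpha$ is an $L^2$-contraction, which furnishes the uniform bounds used below. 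I write $\EE^n_\alpha(u,v):=\EE^n(u,v)+\alpha(u,v)_m$ and recall that $G^n_\alpha f$ is the unique minimizer of $\Phi_n(v):=\EE^n(v,v)+\alpha(v,v)_m-2(f,v)_m$ over $L^2(E,m)$, and likewise $G_\alpha f$ minimizes the corresponding $\Phi$.

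For $(1)\Rightarrow(3)$ I would fix $\alpha>0$, $f\in L^2(E,m)$ and set $u_n:=G^n_\alpha f$, $u:=G_\alpha f$. Testing $\Phi_n(u_n)\le\Phi_n(0)=0$ gives $\alpha(u_n,u_n)_m\le 2(f,u_n)_m$, so $\{u_n\}$ and $\{\EE^n(u_n,u_n)\}$ are bounded; extract a subsequence with $u_{n_k}\rightharpoonup w$ along which $\EE^{n_k}(u_{n_k},u_{n_k})$ realizes its liminf. The Mosco weak condition gives $\EE(w,w)\le\liminf_k\EE^{n_k}(u_{n_k},u_{n_k})$, whence $\liminf_k\Phi_{n_k}(u_{n_k})\ge\Phi(w)\ge\Phi(u)$. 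The Mosco strong condition supplies a recovery sequence $v_n\to u$ with $\limsup_n\EE^n(v_n,v_n)\le\EE(u,u)$; comparing $\Phi_{n_k}(u_{n_k})\le\Phi_{n_k}(v_{n_k})$ yields $\limsup_k\Phi_{n_k}(u_{n_k})\le\Phi(u)$. Hence $\Phi(w)=\Phi(u)$, so $w=u$ by uniqueness of the minimizer, and moreover $\Phi_{n_k}(u_{n_k})\to\Phi(u)$. Since $(f,u_{n_k})_m\to(f,u)_m$, this forces $\EE^{n_k}(u_{n_k},u_{n_k})+\alpha(u_{n_k},u_{n_k})_m\to\EE(u,u)+\alpha(u,u)_m$; as both summands are weakly lower semicontinuous with these exact lower bounds, each converges to its bound, so $(u_{n_k},u_{n_k})_m\to(u,u)_m$. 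Weak convergence together with convergence of norms gives strong convergence in the Hilbert space $L^2(E,m)$, and a subsequence argument promotes this to $u_n\to u$ strongly.

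The equivalence $(2)\Leftrightarrow(3)$ is classical semigroup theory. The implication $(2)\Rightarrow(3)$ follows from the Laplace representation $G^n_\alpha f=\int_0^\infty\mathrm{e}^{-\alpha t}T^n_tf\,dt$ together with dominated convergence, using $\|T^n_tf\|\le\|f\|$. The converse $(3)\Rightarrow(2)$ is the Trotter--Kato theorem: since the $A_n\ge 0$ generate contraction semigroups, strong resolvent convergence upgrades to strong convergence $T^n_tf\to T_tf$ (equivalently, strong resolvent convergence implies $\phi(A_n)\to\phi(A)$ strongly for every bounded continuous $\phi$, applied to $\phi(\lambda)=\mathrm{e}^{-t\lambda}$).

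For $(3)\Rightarrow(1)$ I would use the Yosida-type approximating forms $\EE^{(\beta),n}(u,u):=\beta\big((I-\beta G^n_\beta)u,u\big)_m$, which satisfy $\EE^{(\beta),n}(u,u)\le\EE^n(u,u)$ and increase to $\EE^n(u,u)$ as $\beta\to\infty$ by the spectral calculus (the integrand $\tfrac{\beta\lambda}{\beta+\lambda}$ increases to $\lambda$), and similarly for the limit form. For the weak condition, fix $\beta$ and write $S^n_\beta:=I-\beta G^n_\beta$, a positive self-adjoint contraction with $S^n_\beta\to S_\beta:=I-\beta G_\beta$ strongly by $(3)$; the key lemma is that if such contractions converge strongly and $u_n\rightharpoonup u$, then $\liminf_n(S^n_\beta u_n,u_n)_m\ge(S_\beta u,u)_m$, proved by passing to square roots so that $(S^n_\beta)^{1/2}u_n\rightharpoonup S_\beta^{1/2}u$ and invoking weak lower semicontinuity of the norm. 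Thus $\liminf_n\EE^n(u_n,u_n)\ge\liminf_n\EE^{(\beta),n}(u_n,u_n)\ge\EE^{(\beta)}(u,u)$, and letting $\beta\to\infty$ gives condition $(1)$. For the strong condition with $u\in\FF$, set $w^\beta_n:=\beta G^n_\beta u$; the identity $\EE^n(w^\beta_n,w^\beta_n)=\beta(u,w^\beta_n)_m-\beta(w^\beta_n,w^\beta_n)_m$ together with $w^\beta_n\to\beta G_\beta u=:w^\beta$ strongly (by $(3)$) shows $\EE^n(w^\beta_n,w^\beta_n)\to\EE(w^\beta,w^\beta)$, while as $\beta\to\infty$ one has $w^\beta\to u$ strongly and $\EE(w^\beta,w^\beta)\to\EE(u,u)$; a diagonal choice $\beta=\beta_n\to\infty$ then yields a recovery sequence $u_n:=w^{\beta_n}_n$ (take $u_n=u$ when $u\notin\FF$). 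The \textbf{main obstacle} is the interaction between the \emph{varying} forms and the merely \emph{weak} convergence of the test sequence, which surfaces at exactly two points: upgrading weak to strong convergence in $(1)\Rightarrow(3)$, where one must exploit that the equality case forces convergence of norms; and the weak lower-semicontinuity lemma in $(3)\Rightarrow(1)$, where the cross terms $(S^n_\beta(u_n-u),u_n)_m$ do not vanish termwise and the passage to square roots is what rescues the estimate. The remaining steps are bookkeeping with the spectral calculus and standard semigroup theory.
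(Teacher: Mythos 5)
Your proof is correct. Note that the paper itself gives no argument for Proposition~\ref{PRO42}: it is stated as a known fact with a citation to Mosco's work \cite{U94}, and your write-up is essentially the canonical proof from that source (variational characterization of the resolvent for $(1)\Rightarrow(3)$, Laplace transform and Trotter--Kato for $(2)\Leftrightarrow(3)$, and the Yosida approximating forms $\beta\bigl((I-\beta G^n_\beta)u,u\bigr)_m$ for $(3)\Rightarrow(1)$). Each step checks out: the bound $\alpha\|u_n\|^2\le 2(f,u_n)_m$ gives the needed compactness; strict convexity of $\Phi$ identifies the weak limit; the splitting of the convergent sum into two weakly lower semicontinuous summands correctly upgrades weak to strong convergence; and the identity $\EE^n(\beta G^n_\beta u,\beta G^n_\beta u)=\beta(u,\beta G^n_\beta u)_m-\beta\|\beta G^n_\beta u\|_m^2$ together with a diagonal choice of $\beta_n$ produces the recovery sequence. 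Two points are left implicit but are routine: the Mosco liminf inequality must be applied along a subsequence (which follows by interpolating the subsequence into a full sequence), and the strong convergence $(S^n_\beta)^{1/2}\to S_\beta^{1/2}$ needs a one-line justification via polynomial approximation of $t\mapsto\sqrt{t}$ on $[0,1]$ for uniformly bounded positive contractions. Neither affects the validity of the argument.
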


\subsection{Markov processes related to the phases of stiff problem}\label{SEC42}

Recall that $\mathbb{G}=\mathbb{G}_+\cup \mathbb{G}_-$. The family $\mathscr{M}$ of measures is given by \eqref{EQ3MNA}. 
Fix $m, \lambda\in \mathscr{M}$. Denote the scale function induced by $\lambda_\pm$ by $\ss_\pm$. Their combination, i.e. the scale function induced by $\lambda$ on $\mathbb{R}$, is denoted by $\ss$ as in \S\ref{SEC35}. 
 
The following Markov processes on $\mathbb{R}$ or $\mathbb{G}$ related to $m$ and $\lambda$ are of great interest in this section: 
\begin{itemize}
\item[(1)] a two-sided diffusion process $X$ on $\mathbb{G}$, which is a union of reflecting diffusion $X^\pm:=(X^\pm_t)_{t\geq 0}$ on $\mathbb{G}_\pm$ with scale function $\ss_\pm$, speed measure $m_\pm$ and no killing inside (Cf. \cite{IM74}),
\item[(2)] the snapping out Markov process $X^\s$ on $\mathbb{G}$ with respect to $X$ and 
\[
	\mu:=\frac{\kappa}{2}\left(\delta_{\{0+\}}+\delta_{\{0-\}}\right)
\] 
with a parameter $\kappa>0$, and 
\item[(3)] a diffusion process $X^\i=(X^\i_t)_{t\geq 0}$ on $\mathbb{R}$ with scale function $\ss$, speed measure $m$ and no killing inside. 
\end{itemize}
The diffusion $X$ is given by the Dirichlet form \eqref{EQ4FFLG}. It is not irreducible, and $\mathbb{G}_+, \mathbb{G}_-$ are its invariant sets. Applying Proposition~\ref{PRO313}, $X^\s$ is associated with
\begin{equation}\label{EQ4FSF}
\begin{aligned}
	\FF^\s&=\FF,\\
	\EE^\s(f,g)&=\EE(f,g)+\frac{\kappa}{4}\left(f(0+)-f(0-)\right)\left(g(0+)-g(0-)\right),\quad f,g\in \FF^\s. 
\end{aligned}
\end{equation}
It is irreducible. Finally, the irreducible diffusion $X^\i$ (the superscript `$\i$' stands for `irreducible') is $m$-symmetric and associated with a regular Dirichlet form on $L^2(\mathbb{R},m)$
\begin{equation}\label{EQ4FIF}
\begin{aligned}
&\FF^\i=\bigg\{f\in L^2(\mathbb{R}, m): f\ll \lambda, \int_{\mathbb{R}} \left(\frac{df}{d\lambda}\right)^2d\lambda<\infty, \\
&\qquad\qquad\qquad\qquad \qquad f(\pm\infty):=\lim_{x\rightarrow \pm\infty}f(x)=0 \text{ if }\lambda_\pm(\mathbb{G}_\pm)<\infty\bigg\}, \\
&\EE^\i(f,g)=\frac{1}{2}\int_{\mathbb{R}}\frac{df}{d\lambda}\frac{dg}{d\lambda}d\lambda,\quad f,g\in \FF^\i. 
\end{aligned}
\end{equation} 
It is worth noting that every (quasi-continuous) function $f$ in $\FF$ (or $\FF^\s$) is continuous on $\mathbb{G}_+$ and $\mathbb{G}_-$ respectively, but possibly $f(0-)\neq f(0+)$. However, every (quasi-continuous) function in $\FF^\i$ is continuous on $\mathbb{R}$, particularly it is continuous at $0$. Notice that $L^2(\mathbb{G},m)=L^2(\mathbb{R},m)$. If we regard every function in $\FF$ as an $m$-equivalence class, then $\FF^i\subsetneqq \FF=\FF^\s$.

\begin{remark}\label{RM43}
The fixed measure $m\in \mathscr{M}$ is the common symmetric measure (or speed measure) of these Markov processes. It is usually taken to be the Lebesgue measure in the thermal conduction model. The scale function $\ss$ as well as $\lambda$ plays the role of the `thermal resistance', which reflects the ability of the material to resist the flow of the heat. Let us make a brief explanation of this fact. Take $m$ to be the Lebesgue measure on $\mathbb{R}$ and assume that $\ss$ is absolutely continuous. Then for any $f,g\in \FF^\i$, 
\[
	\EE^\i(f,g)=\frac{1}{2}\int_\mathbb{R} \frac{f'(x)g'(x)}{\ss'(x)}dx. 
\]
Under a slight assumption, the generator $\mathcal{L}^\i$ of $(\EE^\i,\FF^\i)$ has $C_c^\infty(\mathbb{R})$ as its core and for any $f\in C_c^\infty(\mathbb{R})$,
\[
	\mathcal{L}^if(x)=\frac{1}{2}\nabla\left(\frac{1}{\ss'(x)}\nabla f(x)\right).  
\]
In other words, $1/\ss'$ is nothing but the thermal conductivity $a$ in \eqref{EQ3PTU}. 
\end{remark}

\begin{example}
When $m$ and $\lambda$ are both the Lebesgue measure on $\mathbb{G}$, $X$ is the two-sided reflecting Brownian motion on $\mathbb{G}$, $X^\s$ is the SNOB, and $X^\i$ is the one-dimensional Brownian motion on $\mathbb{R}$.

In Example~\ref{EXA311}, $m=\tilde{m}$, $\lambda_+(dx)=\frac{dx}{2\alpha}$ and $\lambda_-(dx)=\frac{dx}{2(1-\alpha)}$. 
In Example~\ref{EXA313}, $m$ is the Lebesgue measure on $\mathbb{G}$, $\lambda_-$ is the Lebesgue measure on $\mathbb{G}_-$ but $\lambda_+$ is a finite measure on $\mathbb{G}_+$.
\end{example}

Let $H:=L^2(\mathbb{R},m)=L^2(\mathbb{G},m)$. Denote the generators of $X, X^\s, X^\i$ on $H$ by $\mathcal{L}, \mathcal{L}^\s,  \mathcal{L}^\i$ respectively. Recall that $u\in \D(\L^\dagger)$, $f=\L^\dagger u\in H$ if and only if $u\in \FF^\dagger$ and $\EE^\dagger(u,v)=(-f,v)_H$ for any $v\in \FF^\dagger$, where $\dagger$ is vacant or stands for $\s$ or $\i$ until the end of this section, and $\mathcal{D}(\mathcal{L}^\dagger)$ is the domain of $\mathcal{L}^\dagger$. 

\begin{proposition}\label{PRO45}
Let $m,\lambda,\kappa$ and $X,X^\s,X^\i$ be given above. 
\begin{itemize}
\item[(1)] The generator of $X$ is 
\[
	\L u|_{\mathbb{G}_\pm}=\frac{1}{2}\frac{d}{dm_\pm}\left(\frac{du_\pm}{d\lambda_\pm}\right)
\]
with
\begin{equation*}
\begin{aligned}
\D(\L)=\left\{u\in \FF: \frac{du_\pm}{d\lambda_\pm}\ll m_\pm, \frac{d}{dm_\pm}\left(\frac{du_\pm}{d\lambda_\pm}\right)\in L^2(\mathbb{G}_\pm,m_\pm),  \frac{du_\pm}{d\lambda_\pm}(0\pm)=0\right\}.
\end{aligned}
\end{equation*}
\item[(2)] The generator of $X^\s$ is 
\[
	\L^\s u|_{\mathbb{G}_\pm}=\frac{1}{2}\frac{d}{dm_\pm}\left(\frac{du_\pm}{d\lambda_\pm}\right)
\]
with
\begin{equation}\label{EQ4DLS}
\begin{aligned}
\D(\L^\s)=\bigg\{u\in \FF^\s: \frac{du_\pm}{d\lambda_\pm}\ll m_\pm, &\frac{d}{dm_\pm}\left(\frac{du_\pm}{d\lambda_\pm}\right)\in L^2(\mathbb{G}_\pm,m_\pm),\\ & \frac{du_\pm}{d\lambda_\pm}(0\pm)=\frac{\kappa}{2}(u(0+)-u(0-))\bigg\}.
\end{aligned}
\end{equation}
\item[(3)] The generator of $X^\i$ is 
\[
	\L^i u=\frac{1}{2}\frac{d}{dm}\left(\frac{du}{d\lambda}\right)
\]
with
\begin{equation*}
\begin{aligned}
\D(\L^\i)=\bigg\{u\in \FF^\i: \frac{du}{d\lambda}\ll m, \frac{d}{dm}\left(\frac{du}{d\lambda}\right)\in L^2(\mathbb{R},m)\bigg\}.
\end{aligned}
\end{equation*}
\end{itemize}
\end{proposition}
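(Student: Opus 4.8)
The plan is to read each generator off its weak (variational) description: $u\in\D(\L^\dagger)$ with $\L^\dagger u=f$ precisely when $u\in\FF^\dagger$ and $\EE^\dagger(u,v)=(-f,v)_m$ for every $v\in\FF^\dagger$, where $\dagger$ is vacant or stands for $\s$ or $\i$. The engine throughout is a Green-type integration-by-parts identity for the Stieltjes calculus attached to $\lambda$ and $m$: writing $p:=\frac{du}{d\lambda}$ for the $\lambda$-derivative, one has $\int p\,dv=\int p\,\frac{dv}{d\lambda}\,d\lambda$, and if $p\ll m$ with density $g=\frac{dp}{dm}$ then $\int_J p\,dv=[pv]_{\partial J}-\int_J v\,g\,dm$ on any interval $J$. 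Since $\EE^\dagger(u,v)=\frac12\int p\,dv$ on each of $\mathbb{G}_\pm$ (plus, for $X^\s$, the $\kappa/4$-penalty), this identity simultaneously produces the bulk operator $\frac12\frac{d}{dm}\frac{du}{d\lambda}$ and the boundary contributions. The case of $X^\i$ on $\mathbb{R}$ is the classical generator $\frac12\frac{d}{dm}\frac{d}{d\lambda}$ of a regular one-dimensional diffusion with no killing, so (3) is essentially the characterization already recorded for $(\EE^\i,\FF^\i)$ in \cite{FHY10,F14,LY172}; cases (1) and (2) then differ from it only through the extra boundary terms created at $0\pm$ by splitting $\mathbb{R}$ into $\mathbb{G}_+\cup\mathbb{G}_-$.

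For the inclusion ``stated domain $\subseteq\D(\L^\dagger)$'', I would take $u$ in the prescribed set, put $f:=\frac12\frac{d}{dm}\frac{du}{d\lambda}\in L^2(\mathbb{G},m)$, and integrate by parts on $\mathbb{G}_+=[0+,\infty)$ and $\mathbb{G}_-=(-\infty,0-]$ separately. The boundary terms at $\pm\infty$ vanish because of the conditions defining $\FF^\pm$ (resp. $\FF^\i$): either $v$ vanishes there, or $p=\frac{du}{d\lambda}$ does. At $0$ one is left with $-\tfrac12\,p_+(0+)\,v(0+)+\tfrac12\,p_-(0-)\,v(0-)$. For $X$ the Neumann condition $\frac{du_\pm}{d\lambda_\pm}(0\pm)=0$ kills these terms outright, giving $\EE(u,v)=(-f,v)_m$. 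For $X^\s$ one adds the penalty $\frac{\kappa}{4}(u(0+)-u(0-))(v(0+)-v(0-))$; grouping the coefficients of $v(0+)$ and of $v(0-)$ and using $\frac{du_\pm}{d\lambda_\pm}(0\pm)=\frac{\kappa}{2}(u(0+)-u(0-))$ makes both boundary coefficients vanish, again yielding $\EE^\s(u,v)=(-f,v)_m$ for all $v\in\FF^\s$. This settles one inclusion and identifies $\L^\dagger u$.

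For the reverse inclusion I would argue as in the regularity theory of one-dimensional forms. Given $u\in\D(\L^\dagger)$ with $\L^\dagger u=f$, I test against $v\in\FF^\dagger$ of compact support inside $\mathbb{G}_\pm$: integrating by parts on the right-hand side, with $F$ defined by $dF=f\,dm$, turns $\EE^\dagger(u,v)=(-f,v)_m$ into $\int(\tfrac12\frac{du}{d\lambda}-F)\,\frac{dv}{d\lambda}\,d\lambda=0$. As $\frac{dv}{d\lambda}$ ranges over compactly supported, $\lambda$-mean-zero densities, this forces $\tfrac12\frac{du}{d\lambda}-F$ to be constant $\lambda$-a.e.\ on each component, i.e.\ $\frac{du}{d\lambda}\ll m$ with $\frac{d}{dm}\frac{du}{d\lambda}=2f\in L^2$, which is exactly the interior regularity asked for. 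Reinstating test functions $v$ with arbitrary prescribed values $v(0+),v(0-)$ and matching the surviving boundary terms against $0$ (resp.\ against the $\kappa/4$-penalty) then forces the Neumann (resp.\ Robin/jump) condition at $0\pm$. The main obstacle, in my view, is not the algebra at $0$ but making the integration by parts and the vanishing of the boundary terms at $\pm\infty$ rigorous for the generalized derivative $\frac{du}{d\lambda}$ --- especially in the recurrent regime $\lambda_\pm(\mathbb{G}_\pm)=\infty$, where functions of $\FF$ need not vanish at infinity and one must instead control $p=\frac{du}{d\lambda}$ there; this is where I would lean on the precise description of the extended Dirichlet spaces $\FF^\pm_\e$ and the one-dimensional diffusion theory cited above.
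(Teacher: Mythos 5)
Your proposal follows essentially the same route as the paper: the generator is read off the variational identity $\EE^\dagger(u,v)=(-f,v)_m$, and the reverse inclusion is obtained by testing against compactly supported $v$, integrating by parts against the antiderivative $F$ with $dF=f\,dm$, and matching the surviving terms at $0\pm$ (the paper only carries this out for $X^\s$, citing \cite{F14} for $X$ and $X^\i$). The one place you diverge --- integrating by parts over all of $\mathbb{G}_\pm$ in the easy inclusion and then worrying about boundary terms at $\pm\infty$ in the recurrent regime --- is a detour you do not need: both sides of the identity are $\EE^\s_1$-continuous in $v$, and a special standard core of $(\EE^\s,\FF^\s)$ consists of compactly supported functions by Theorem~\ref{THM35}, so it suffices to verify the identity for such $v$ and no boundary term at $\pm\infty$ ever appears. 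With that adjustment your argument is complete and coincides with the paper's.
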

\begin{proof}
Note that $\frac{du_\pm}{d\lambda_\pm}\ll m_\pm$ and $\frac{d}{dm_\pm}\left(\frac{du_\pm}{d\lambda_\pm}\right)\in L^2(\mathbb{G}_\pm,m_\pm)$ imply $\frac{du_\pm}{d\lambda_\pm}$ is continuous on $\mathbb{G}\pm$ and of bounded variaiton, since $m$ charges no set of singleton.  Particularly, $\frac{du_\pm}{d\lambda_\pm}(0\pm)$ is well defined. The expressions of $\L$ and $\L^\i$ are derived in \cite{F14}. We need only prove the second assertion. Denote the right side of \eqref{EQ4DLS} by $\mathcal{G}$. It is direct to check that $\mathcal{G}\subset \D(\L^\s)$ and $\L^\s u=\frac{1}{2}\frac{d}{dm}\frac{du}{d\lambda}$ for $u\in \mathcal{G}$. To the contrary, take $u\in \D(\L^\s)$ with $\L^\s u=f\in H$. Then for any fixed $M>0$ and any $v\in \FF^\s\cap C_c(\mathbb{G})$ with $\text{supp}[v_\pm]\subset \mathbb{G}_\pm\cap [-M,M]$, 
\[
	\EE^\s(u,v)=(-f,v)_H. 
\]
On one hand, $v$ is of bounded variation  and we have
\[
	\EE^\s(u,v)=\frac{1}{2}\int_{\mathbb{G}_+}\frac{du}{d\lambda} dV+\frac{1}{2}\int_{\mathbb{G}_-}\frac{du}{d\lambda} dV +C(v(0+)-v(0-)),
\]
where $V$ is the signed measure induced by $v$ and $C:=\frac{\kappa}{4}(u(0+)-u(0-))$. 
On the other hand, write 
\[	F(x):=F(0\pm)+\int_{0\pm}^{(-M)\vee x \wedge M} f(x)m(dx),\quad x\in \mathbb{G}_\pm, 
\]
where $F(0+)$ and $F(0-)$ are two constants. Since $f\in L^2(\mathbb{G},m)$, it follows that $F$ is of bounded variation and $dF=fdm$ on $\mathbb{G}_\pm$ respectively. This implies 
\[
\begin{aligned}
	(-f,v)_H&=-\int_\mathbb{G_+}v(x)dF(x)-\int_{\mathbb{G}_-}v(x)dF(x)  \\
		&=\int_{\mathbb{G}_+}FdV+\int_{\mathbb{G}_-}FdV + F(0+)v(0_+)-F(0-)v(0-).
\end{aligned}
\]
By letting $v|_{\mathbb{G}_-}\equiv 0$ or $v|_{\mathbb{G}_+}\equiv 0$, we have
\[
	\frac{1}{2}\int_{\mathbb{G}_\pm}\frac{du}{d\lambda} dV\pm Cv(0\pm)=\int_{\mathbb{G}_\pm}FdV\pm F(0\pm)v(0\pm).
\]
Then we can easily conclude that $C=F(0\pm)$ and $\frac{1}{2}\frac{du}{d\lambda}=F$ on $\mathbb{G}$. This indicates 
\[
	\frac{du_\pm}{d\lambda_\pm}\ll m_\pm, \quad \frac{1}{2}\frac{d}{dm_\pm}\left(\frac{du_\pm}{d\lambda_\pm}\right)=f_\pm\in L^2(\mathbb{G}_\pm,m_\pm)
	\]
and 
\[
	\frac{du_\pm}{d\lambda_\pm}(0\pm)=2†C=\frac{\kappa}{2}(u(0+)-u(0-)).
\]
That completes the proof.
\end{proof}

The semigroup $P_t^\dagger$ of $X^\dagger$ satisfies the strong Feller property in the sense that 
\[
	P^\dagger_tf(\cdot)=\mathbf{E}_\cdot[f(X^\dagger_t)]\in C_b(E),\quad \forall f\in \mathcal{B}_b(E),
\]
where $E=\mathbb{R}$ or $\mathbb{G}$ is the state space of $X^\dagger$. Indeed, take $g_n\in L^2(E,m)$ with $g_n\uparrow 1$, and set $f_n:=f\cdot g_n$. Then $f_n\in L^2(E,m)$ and thus $P_t^\dagger f_n$ is a quasi-continuous function in $\FF^\dagger$. This indicates $P_t^\dagger f_n\in C_b(E)$, since every singleton of $E$ is of positive capacity with respect to $(\EE^\dagger, \FF^\dagger)$. Therefore we can conclude $P^\dagger_tf\in C_b(E)$ from $\|P^\dagger_tf_n-P^\dagger_tf\|_{C_b}\leq \|f_n-f\|_\infty\rightarrow 0$. The strong Feller property of $P^\dagger_t$ tells us it is also feasible to explore the generator of $X^\dagger$ on $C_b(E)$. We refer further considerations to \cite{F14}. 

\subsection{Phase transition of stiff problem}\label{SEC43}
 
As mentioned before, the stiff problem is concerned with a thermal conduction model with a singular barrier. In this subsection, we shall focus on the probabilistic description of this problem, and the main tool is the Mosco convergence of Dirichlet forms introduced in \S\ref{SEC41}. 

For $\varepsilon>0$, assume that a normal barrier is located at $I_\varepsilon=(-\varepsilon,\varepsilon)$. It is identified with a thermal resistance $\gamma_\varepsilon$ on $I_\varepsilon$. In other words, $\gamma_\varepsilon$ is a positive, finite and fully supported measure on $I_\varepsilon$ charging no set of singleton. Let $\mathbb{R}\setminus I_\varepsilon$ be of normal material with $T_{\varepsilon}^{\#}\lambda$ being its thermal resistance. Recall that $T_\varepsilon: \mathbb{G}\rightarrow \mathbb{R}\setminus I_\varepsilon$ is a homeomorphism, and $T_{\varepsilon}^{\#}\lambda$ is the image measure of $\lambda$ under $T_\varepsilon$. Set a measure on $\mathbb{R}$
\begin{equation}\label{EQ4LVT}
	\lambda_\varepsilon:= T_{\varepsilon}^{\#}\lambda+\gamma_\varepsilon. 
\end{equation}
Clearly, $\lambda_\varepsilon\in \mathscr{M}$ and denote its induced scale function by $\ss_\varepsilon$. By means of $m$ and $\lambda_\varepsilon$, we could write the Dirichlet form related to the thermal conduction model with the normal barrier $(I_\varepsilon, \gamma_\varepsilon)$ as follows
\begin{equation}\label{EQ4FFLR}
\begin{aligned}
&\FF^\varepsilon=\bigg\{f\in L^2(\mathbb{R}, m): f\ll \lambda_\varepsilon, \int_{\mathbb{R}} \left(\frac{df}{d\lambda_\varepsilon}\right)^2d\lambda_\varepsilon<\infty, \\
&\qquad\qquad\qquad\qquad \qquad f(\pm\infty):=\lim_{x\rightarrow \pm\infty}f(x)=0 \text{ if }\lambda_\varepsilon(\mathbb{G}_\pm)<\infty\bigg\}, \\
&\EE^\varepsilon(f,g)=\frac{1}{2}\int_{\mathbb{R}}\frac{df}{d\lambda_\varepsilon}\frac{dg}{d\lambda_\varepsilon}d\lambda_\varepsilon,\quad f,g\in \FF^\varepsilon. 
\end{aligned}
\end{equation} 
The associated diffusion $X^\varepsilon$ of $(\EE^\varepsilon, \FF^\varepsilon)$ is irreducible and $m$-symmetric on $\mathbb{R}$. 

The main purpose of this section is to study the convergence of $(\EE^\varepsilon,\FF^\varepsilon)$ as $\varepsilon\downarrow 0$. 
Before moving on, we need to prepare some notations. Take a decreasing sequence $\varepsilon_n\downarrow 0$ and write $I_n, \gamma_n, \lambda_n, (\EE^n, \FF^n)$ for $I_{\varepsilon_n}, \gamma_{\varepsilon_n}, \lambda_{\varepsilon_n}, (\EE^{\varepsilon_n}, \FF^{\varepsilon_n})$ respectively. 
Set
\[
	m^*(n):=\sup_{x\in \mathbb{R}}m\left([x,x+\varepsilon_n ]\right), \quad \lambda^*(n):=\sup_{x\in \mathbb{R}}\lambda\left([x,x+\varepsilon_n ]\right).
\]
Moreover, $\bar{\gamma}(n):=\gamma_n(I_n)$ is called the \emph{total thermal resistance} of $I_n$. In the following theorem, we build a phase transition in the context of the convergence of $(\EE^n,\FF^n)$ as $n\rightarrow \infty$. This phase transition sheds light on the patterns of thermal conduction model with a singular barrier at $0$, which definitely depend on its total thermal resistance. Notice that although the associated Markov processes live in $\mathbb{G}$ or $\mathbb{R}$, the Dirichlet forms \eqref{EQ4FFLG}, \eqref{EQ4FSF}, \eqref{EQ4FIF} and $(\EE^n,\FF^n)$ are on the same Hilbert space $H=L^2(\mathbb{G},m)=L^2(\mathbb{R},m)$. Thus $H$ is also the underlying space of Mosco convergences below. 

\begin{theorem}\label{THM45}
Let $\varepsilon_n, I_n, \gamma_n, \lambda_n, (\EE^n, \FF^n)$ be given above. Assume 
\begin{equation}\label{EQ4LNM}
	\bar{\gamma}(n)m^*(n)+\lambda^*(n)m^*(n)\rightarrow 0\quad \text{as }n\rightarrow \infty,
\end{equation}
and 
\[
	\bar{\gamma}:=\lim_{n\rightarrow \infty} \bar{\gamma}(n) \quad (\leq \infty)
\]
exists. Then the following assertions hold:
\begin{itemize}
\item[(1)] $\bar{\gamma}=\infty$: $(\EE^n, \FF^n)$ converges to the Dirichlet form $(\EE, \FF)$ given by \eqref{EQ4FFLG} in the sense of Mosco.
\item[(2)] $0<\bar{\gamma}<\infty$: $(\EE^n, \FF^n)$ converges to the Dirichlet form $(\EE^\s, \FF^\s)$ given by \eqref{EQ4FSF} with the parameter $\kappa=2/\bar{\gamma}$ in the sense of Mosco.
\item[(3)] $\bar{\gamma}=0$: $(\EE^n, \FF^n)$ converges to the Dirichlet form $(\EE^\i, \FF^\i)$ given by \eqref{EQ4FIF} in the sense of Mosco. 
\end{itemize}
\end{theorem}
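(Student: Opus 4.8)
The plan is to verify the two defining conditions of Mosco convergence in Definition~\ref{DEF41} directly, treating the three regimes simultaneously. The unifying observation is that the candidate limit in all three cases can be written as
\[
\EE^\infty(u,u)=\EE(u,u)+\frac{1}{2\bar\gamma}\bigl(u(0+)-u(0-)\bigr)^2,
\]
with the conventions $\tfrac{1}{2\bar\gamma}=0$ when $\bar\gamma=\infty$ (no penalty for a jump, domain $\FF$, giving \eqref{EQ4FFLG}), $\tfrac{1}{2\bar\gamma}=\tfrac{\kappa}{4}$ when $0<\bar\gamma<\infty$ with $\kappa=2/\bar\gamma$ (domain $\FF^\s=\FF$, giving \eqref{EQ4FSF}), and $\tfrac{1}{2\bar\gamma}=\infty$ when $\bar\gamma=0$, the last forcing $u(0+)=u(0-)$ and hence the domain $\FF^\i$ of \eqref{EQ4FIF}. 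The starting point is the decomposition, for $f\in\FF^n$,
\[
\EE^n(f,f)=\frac12\int_{\mathbb{R}\setminus I_n}\Bigl(\frac{df}{d\lambda_n}\Bigr)^2 d\lambda_n+\frac12\int_{I_n}\Bigl(\frac{df}{d\gamma_n}\Bigr)^2 d\gamma_n,
\]
where, since $\lambda_n=T^{\#}_{\varepsilon_n}\lambda$ on $\mathbb{R}\setminus I_n$, the change of variables $T_{\varepsilon_n}\colon\mathbb{G}\to\mathbb{R}\setminus I_n$ identifies the first integral with $\EE(\tilde f,\tilde f)$ for $\tilde f:=f\circ T_{\varepsilon_n}$ on $\mathbb{G}$, satisfying $\tilde f(0\pm)=f(\pm\varepsilon_n)$.

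The analytic engine driving both halves of the proof is a Cauchy--Schwarz bound on the barrier term: since $f\ll\gamma_n$ on $I_n$ and $\gamma_n(I_n)=\bar\gamma(n)$,
\[
\bigl(f(\varepsilon_n)-f(-\varepsilon_n)\bigr)^2=\Bigl(\int_{I_n}\frac{df}{d\gamma_n}\,d\gamma_n\Bigr)^2\le\bar\gamma(n)\int_{I_n}\Bigl(\frac{df}{d\gamma_n}\Bigr)^2 d\gamma_n,
\]
so the barrier part of $\EE^n(f,f)$ is at least $\bigl(f(\varepsilon_n)-f(-\varepsilon_n)\bigr)^2/(2\bar\gamma(n))$, with equality precisely when $\tfrac{df}{d\gamma_n}$ is constant, i.e. when $f$ is affine in the scale $\ss_n$ across $I_n$. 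It is the factor $1/\bar\gamma(n)\to 1/\bar\gamma$ produced here that governs the phase transition.

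For the recovery condition (2) it suffices, by closedness of the limit form together with Theorem~\ref{THM35} (which guarantees a common special standard core), to construct a recovery sequence for $u$ ranging over a core of $\EE^\infty$; for $u$ outside the limit domain condition (2) is automatic. Given such a $u$, I would set $u_n:=u\circ T^{-1}_{\varepsilon_n}$ on $\mathbb{R}\setminus I_n$ and let $u_n$ be the $\ss_n$-affine interpolation between $u(0-)$ and $u(0+)$ on $I_n$. The outside energy then equals $\EE(u,u)$ up to the vanishing discrepancy between $T_{\varepsilon_n}$ and $\id$, while the barrier energy is \emph{exactly} $\bigl(u(0+)-u(0-)\bigr)^2/(2\bar\gamma(n))\to\tfrac{1}{2\bar\gamma}\bigl(u(0+)-u(0-)\bigr)^2$ by the equality case above (and is $0$ when $\bar\gamma=0$, since then $u(0+)=u(0-)$). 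Strong $L^2(\mathbb{R},m)$ convergence $u_n\to u$ holds because $T_{\varepsilon_n}\to\id$ and the interpolant is uniformly bounded while $m(I_n)\to 0$; hence $\limsup_n\EE^n(u_n,u_n)\le\EE^\infty(u,u)$.

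For the liminf condition (1), let $u_n$ converge weakly to $u$ in $L^2(\mathbb{R},m)$; passing to a subsequence we may assume $\lim_n\EE^n(u_n,u_n)=:c<\infty$, so the outside energies are bounded. The one-dimensional oscillation estimate $|u_n(x)-u_n(y)|\le\EE^n(u_n,u_n)^{1/2}\lambda_n([x,y])^{1/2}$ makes $\{u_n\}$ equicontinuous on compacts of each component of $\mathbb{R}\setminus\{0\}$, so a further subsequence converges there locally uniformly; comparing with the weak limit identifies the limit with $u$ on each side, whence $\EE(u,u)\le\liminf_n\EE(\tilde u_n,\tilde u_n)$ by lower semicontinuity of $\EE$. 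Combined with the barrier bound of the second paragraph this gives $\liminf_n\EE^n(u_n,u_n)\ge\EE(u,u)+\liminf_n\bigl(u_n(\varepsilon_n)-u_n(-\varepsilon_n)\bigr)^2/(2\bar\gamma(n))$, which yields $\EE^\infty(u,u)$ in each regime once the \textbf{main obstacle} is settled: the \emph{trace convergence} $u_n(\pm\varepsilon_n)\to u(0\pm)$ at the \emph{moving} barrier edges. This is exactly where hypothesis \eqref{EQ4LNM} is essential: $\bar\gamma(n)m^*(n)\to 0$ kills the $L^2(m)$-mass $\int_{I_n}u_n^2\,dm$ carried on the shrinking barrier (so the weak limit "sees" only the outside), while $\lambda^*(n)m^*(n)\to 0$ serves as a vanishing local Poincar\'e constant controlling the $L^2(m)$-oscillation of $u_n$ over barrier-width intervals, tying the merely weak $L^2$ data to the pointwise edge values. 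Granting the trace convergence, the case $0<\bar\gamma<\infty$ produces the term $\tfrac{1}{2\bar\gamma}\bigl(u(0+)-u(0-)\bigr)^2=\tfrac{\kappa}{4}\bigl(u(0+)-u(0-)\bigr)^2$, the case $\bar\gamma=\infty$ drops it, and in the case $\bar\gamma=0$ boundedness of $\EE^n(u_n,u_n)$ forces $\bigl(u_n(\varepsilon_n)-u_n(-\varepsilon_n)\bigr)^2\le 2c\,\bar\gamma(n)\to 0$, so $u(0+)=u(0-)$ and $u\in\FF^\i$, completing all three assertions (the remaining domain conditions, such as decay at $\pm\infty$ in the transient subcases, follow from the finite-energy characterizations of $\FF$, $\FF^\s$ and $\FF^\i$).
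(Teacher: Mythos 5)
Your overall skeleton matches the paper's: the shifted functions $\breve f_n:=f_n\circ T_{\varepsilon_n}$, the Cauchy--Schwarz bound $\bigl(f_n(\varepsilon_n)-f_n(-\varepsilon_n)\bigr)^2\le\bar\gamma(n)\int_{I_n}(df_n/d\gamma_n)^2d\gamma_n$, the scale-affine interpolation for the recovery sequence (whose barrier energy is exactly $(u(0+)-u(0-))^2/(2\bar\gamma(n))$), and the role of \eqref{EQ4LNM} in forcing $\|f_n-\breve f_n\|_H\to 0$. The recovery half of your argument is essentially the paper's and is fine (the reduction to a core is unnecessary, since $u(0\pm)$ exist for every $u$ in the limit domain, but it does no harm).

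The genuine gap is in the liminf half. You reduce everything to the ``trace convergence'' $u_n(\pm\varepsilon_n)\to u(0\pm)$, explicitly call it the main obstacle, and then do not prove it: the stated justification --- that $\bar\gamma(n)m^*(n)\to 0$ kills the barrier mass and $\lambda^*(n)m^*(n)\to 0$ acts as a vanishing local Poincar\'e constant ``tying the weak $L^2$ data to the pointwise edge values'' --- is a heuristic, not an argument, and it is not in fact how \eqref{EQ4LNM} is used. The claim itself is salvageable in one dimension (boundedness of $u_n(\pm\varepsilon_n)$ as in \eqref{EQ4FNN}, equicontinuity from $|u_n(x)-u_n(\varepsilon_n)|\le\sqrt{2M\,\lambda([0+,x-\varepsilon_n])}$, Arzel\`a--Ascoli, identification of the local uniform limit with the weak limit, and the fact that $\lambda$ has no atom at $0$), but each of these steps needs to be written out, and this is precisely the crux of the case $0<\bar\gamma<\infty$. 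The paper avoids the issue entirely: it applies lower semicontinuity of the \emph{limit} form $\EE^\s$ (which already contains the jump term) to the weakly convergent sequence $\breve f_n$, and then dominates $\EE^\s(\breve f_n,\breve f_n)$ by $\EE^n(f_n,f_n)+\frac{\bar\gamma(n)\kappa-2}{4}\int_{I_n}(df_n/d\gamma_n)^2d\gamma_n$ with a vanishing error --- no convergence of the boundary traces is ever invoked; likewise, for $\bar\gamma=0$ it transfers the vanishing jump to the limit via a Banach--Saks/Ces\`aro argument rather than via trace convergence. Either supply the missing equicontinuity argument or switch to the lower-semicontinuity-of-the-limit-form route; as written, the central inequality of assertion (2) is not established.
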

\begin{proof}
\begin{itemize}
\item[(1)] Suppose $\{f_n\}$ converges to $f$ weakly in $H$ and $\liminf_{n\rightarrow\infty}\EE^n(f_n,f_n)<\infty$. For showing $\EE(f,f)\leq \liminf_{n\rightarrow\infty}\EE^n(f_n,f_n)$, there is no loss of generality in assuming
\[
	M:=\sup_{n\geq 1}\EE^n(f_n,f_n)<\infty. 
\]
Define a function $\breve{f}_n:=f_n\circ T_{\varepsilon_n}$, i.e. $\breve{f}_n(x):=f_n(x+\varepsilon_n)$ for $x\geq 0$ and $\breve{f}_n(x):=f_n(x-\varepsilon_n)$ for $x<0$. We assert 
\begin{equation}\label{EQ4FNFN}
	\|f_n-\breve{f}_n\|_H\rightarrow 0 \quad \text{as } n\rightarrow \infty, 
\end{equation}
and particularly, $\breve{f}_n$ converges to $f$ weakly in $H$. Indeed, 
\[
	\|f_n-\breve{f}_n\|_H^2=\int_0^\infty \left(f_n(x)-\breve{f}_n(x)\right)^2 m(dx) +\int_{-\infty}^0 \left(f_n(x)-\breve{f}_n(x)\right)^2 m(dx).
\]
We can deduce that
\[
\begin{aligned}
\int_0^\infty \left(f_n(x)-\breve{f}_n(x)\right)^2 m(dx)&=\int_0^\infty \left(\int_x^{x+\varepsilon_n}\frac{df_n}{d\lambda_n}d\lambda_n\right)^2m(dx) \\
&\leq (\lambda^*(n)+\bar{\gamma}(n))m^*(n)\cdot \int_0^\infty \left(\frac{df_n}{d\lambda_n}\right)^2d\lambda_n \\
&\leq 2M\cdot (\lambda^*(n)+\bar{\gamma}(n))m^*(n). 
\end{aligned}
\]
Similarly, $\int_{-\infty}^0 \left(f_n(x)-\breve{f}_n(x)\right)^2 m(dx)\leq 2M\cdot (\lambda^*(n)+\bar{\gamma}(n))m^*(n)$ and thus $\|f_n-\breve{f}_n\|_H^2\rightarrow 0$ by \eqref{EQ4LNM}. Clearly, $\breve{f}_n\in \FF$. Then it follows from $\breve{f}_n=f_n\circ T_{\varepsilon_n}$ and $\lambda_n|_{I^c_n}=\lambda\circ T^{-1}_{\varepsilon_n}$ that
\[
\begin{aligned}
	\EE(f,f)&\leq \liminf_{n\rightarrow \infty} \EE(\breve{f}_n,\breve{f}_n) \\
	&=\liminf_{n\rightarrow\infty} \frac{1}{2}\int_\mathbb{G} \left(\frac{d\breve{f}_n}{d\lambda}\right)^2d\lambda  \\
	&=\liminf_{n\rightarrow \infty} \frac{1}{2}\int_{I^c_n}\left(\frac{df_n}{d\lambda_n}\right)^2d\lambda_n \\
	&\leq \liminf_{n\rightarrow \infty} \EE^n(f_n,f_n). 
\end{aligned}\]

On the other hand, let $g\in H$ with $\EE(g,g)<\infty$. Particularly, $g$ is continuous on $\mathbb{G}_+$ and $\mathbb{G}_-$ respectively, and $g(0+), g(0-)$ are well defined. For each $n$, define a function $g_n$ as follows: 
\begin{equation}\label{EQ4GNI}
g_n|_{I^c_n}:=g\circ T^{-1}_{\varepsilon_n},\quad 	g_n(x):=g(0-)+c_n\cdot\int_{-\varepsilon_n}^x d\gamma_n, \quad x\in I_n,
\end{equation}
with $c_n:=(g(0+)-g(0-))/\bar{\gamma}(n)$. Clearly, $g_n\in \FF^n$. Since $\bar{\gamma}(n)\rightarrow \infty$,  we have
\[
	\EE^n(g_n,g_n)=\EE(g.g)+\frac{1}{2}c_n^2\cdot \bar{\gamma}(n)=\EE(g,g)+\frac{(g(0+)-g(0-))^2}{2\bar{\gamma}(n)}\rightarrow \EE(g,g).
\]
Mimicking \eqref{EQ4FNFN}, we can also obtain $\|g_n-g\|_H\rightarrow 0$. This implies $\{g_n\}$ is a sequence that converges to $g$ strongly in $H$ and
\[
	\limsup_{n\rightarrow \infty}\EE^n(g_n,g_n)\leq \EE(g,g). 
\]
\item[(2)] Suppose $\{f_n\}$ converges to $f$ weakly in $H$, $\liminf_{n\rightarrow\infty}\EE^n(f_n,f_n)<\infty$ and $M:=\sup_{n\geq 1}\EE^n(f_n,f_n)<\infty$. Let $\breve{f}_n=f_n\circ T_{\varepsilon_n}\in \FF=\FF^\s$. We know that $\breve{f}_n\rightarrow f$ weakly in $H$. Since 
\begin{equation}\label{EQ4FNF}
\begin{aligned}
	\left(\breve{f}_n(0+)-\breve{f}_n(0-)\right)^2 &=\left(f_n(\varepsilon_n)-f_n(-\varepsilon_n)\right)^2 \\
	&=\left(\int_{-\varepsilon_n}^{\varepsilon_n}\frac{df_n}{d\gamma_n}d\gamma_n  \right)^2 \\
	&\leq \bar{\gamma}(n)\int_{-\varepsilon_n}^{\varepsilon_n}\left(\frac{df_n}{d\gamma_n}\right)^2d\gamma_n,
\end{aligned}\end{equation}
it follows that
\[
\begin{aligned}
	\EE^\s(f,f)&\leq \liminf_{n\rightarrow \infty} \EE^\s(\breve{f}_n,\breve{f}_n) \\
	&=\liminf_{n\rightarrow \infty}\left(\frac{1}{2}\int_\mathbb{G}\left(\frac{d\breve{f}_n}{d\lambda}\right)^2d\lambda+\frac{\kappa}{4}\left(\breve{f}_n(0+)-\breve{f}_n(0-)\right)^2\right) \\
	&\leq \liminf_{n\rightarrow \infty}\left(\frac{1}{2}\int_{I^c_n}\left(\frac{df_n}{d\lambda_n}\right)^2d\lambda_n+ \frac{\bar{\gamma}(n)\kappa}{4}\int_{-\varepsilon_n}^{\varepsilon_n}\left(\frac{df_n}{d\gamma_n}\right)^2d\gamma_n \right) \\
	&= \liminf_{n\rightarrow \infty}\left(\EE^n(f_n,f_n)+\frac{\bar{\gamma}(n)\kappa-2}{4}\int_{-\varepsilon_n}^{\varepsilon_n}\left(\frac{df_n}{d\gamma_n}\right)^2d\gamma_n \right).
\end{aligned}\]
Note that $\int_{-\varepsilon_n}^{\varepsilon_n}\left(\frac{df_n}{d\gamma_n}\right)^2d\gamma_n\leq \EE^n(f_n,f_n)\leq M$. As a consequence,
\[
	\lim_{n\rightarrow\infty}\left|\frac{\bar{\gamma}(n)\kappa-2}{4}\int_{-\varepsilon_n}^{\varepsilon_n}\left(\frac{df_n}{d\gamma_n}\right)^2d\gamma_n\right| \leq M\lim_{n\rightarrow\infty}\left|\frac{\bar{\gamma}(n)\kappa-2}{4}\right|=0.
\]
This implies $\EE^\s(f,f)\leq \liminf_{n\rightarrow \infty}\EE^n(f_n,f_n)$. 

On the other hand, let $g\in H$ with $\EE^\s(g,g)<\infty$. Take $g_n$ as in \eqref{EQ4GNI}. Then $g_n\in \FF^n$ and 
\[
	\EE^n(g_n,g_n)=\EE(g,g)+\frac{(g(0+)-g(0-))^2}{2\bar{\gamma}(n)}\rightarrow \EE^\s(g,g). 
\]
Similar to \eqref{EQ4FNFN}, we can also conclude $\lim_{n\rightarrow \infty}\|g_n-g\|_H=0$.
\item[(3)] We still suppose $\{f_n\}$ converges to $f$ weakly in $H$, $\liminf_{n\rightarrow\infty}\EE^n(f_n,f_n)<\infty$ and $M:=\sup_{n\geq 1}\EE^n(f_n,f_n)<\infty$. It has been proved in the case $\bar{\gamma}=\infty$ that $f\in \FF$ and
\[
	\EE(f,f)\leq \liminf_{n\rightarrow \infty}\EE^n(f_n,f_n). 
\]
We need only show $f\in \FF^\i$, which implies $\EE^\i(f,f)=\EE(f,f)\leq \liminf_{n\rightarrow \infty}\EE^n(f_n,f_n)$. In fact, $f$ is continuous on $\mathbb{G}_+$ and $\mathbb{G}_-$ respectively. We still consider $\breve{f}_n=f_n\circ T_{\varepsilon_n}$. Clearly, $\breve{f}_n\rightarrow f$ weakly in $H$ and $\sup_{n}\EE(\breve{f}_n,\breve{f}_n)\leq \sup_n \EE^n(f_n,f_n)\leq M$. The weak convergence of $\breve{f}_n$ in $H$ implies $\sup_n\|\breve{f}_n\|_H<\infty$. Thus $\sup_{n}\EE_1(\breve{f}_n,\breve{f}_n)<\infty$. By Banach-Saks theorem, the Ces\`aro mean of a suitable subsequence of $\{\breve{f}_n\}$ converges to some $h\in \FF$ in $\|\cdot \|_{\EE_1}$-norm. Without loss of generality, we still denote this subsequence by $\{\breve{f}_n\}$. Then $h_k:=\frac{1}{k}\sum_{n=1}^k\breve{f}_n$ is $\EE_1$-convergent to $h$. This implies $h_k$ converges to $h$, $\EE$-q.e., and particularly, $h_k(0\pm)\rightarrow h(0\pm)$. It follows from \eqref{EQ4FNF} that $|\breve{f}_n(0+)-\breve{f}_n(0-)|\leq \sqrt{M\cdot \bar{\gamma}(n)}\rightarrow 0$ as $n\rightarrow \infty$. Hence
\[
	|h(0+)-h(0-)|=\lim_{k\rightarrow \infty}\left|\frac{1}{k}\sum_{n=1}^k\left(\breve{f}_n(0+)-\breve{f}_n(0-)\right) \right|=0. 
\]
This indicates $h$ is continuous on $\mathbb{R}$, and so that $h\in \FF^\i$. Take any $u\in H$, we have $(\breve{f}_n, u)_H\rightarrow (f, u)_H$ and 
\[
(h,u)_H=\lim_{k\rightarrow \infty} (h_k, u)_H=\lim_{k\rightarrow \infty}\frac{1}{k}\sum_{n=1}^k (\breve{f}_n, u)_H=(f,u)_H.
\]
Therefore, $f=h\in \FF^\i$. 

On the other hand, let $g\in H$ with $\EE^\i(g,g)<\infty$. This means $g\in \FF^\i$ and $g$ is continuous on $\mathbb{R}$. Consider $g_n$ in \eqref{EQ4GNI}. Note that $g_n(x)=g(0)$ for any $x\in [-\varepsilon_n,\varepsilon_n]$ since $c_n=0$. Clearly, $g_n\rightarrow g$ strongly in $H$ and 
\[
	\EE^n(g_n,g_n)=\EE(g,g). 
\]
\end{itemize}
That completes the proof. 
\end{proof}
\begin{remark}
In \cite{L16}, $m$ and $\lambda$ are both the Lebesgue measure, and $\gamma_\varepsilon$ is taken to be $\frac{dx}{\kappa\varepsilon}$ on $I_\varepsilon$. Clearly, \eqref{EQ4LNM} holds and $\bar{\gamma}=\bar{\gamma}_\varepsilon(I_\varepsilon)=2/\kappa$. The snapping out Markov process associated with the limit of $(\EE^\varepsilon, \FF^\varepsilon)$ as $\varepsilon \downarrow 0$ is actually the SNOB with the parameter $\kappa$. 
\end{remark}

We call the three patterns of thermal conduction in Theorem~\ref{THM45}
\begin{itemize}
\item[(1)] the \emph{impermeable pattern} for the phase $\bar{\gamma}=\infty$,
\item[(2)] the \emph{semi-permeable pattern} for the phase $0<\bar{\gamma}<\infty$, and
\item[(3)] the \emph{permeable pattern} for the phase $\bar{\gamma}=0$.
\end{itemize}    
The most interesting case is the semi-permeable pattern (it is very similar to the `barrier penetration' in quantum mechanics). As we have shown in \S\ref{SEC3}, the penetrations in this case are realized by additional jumps between $0+$ and $0-$ in the probabilistic counterpart.
The parameter $\kappa$, i.e. the reciprocal of total thermal resistance, reflects the ability of the flow to penetrate the singular barrier.

Though the convergences in Theorem~\ref{THM45} are in the manner of Dirichlet forms, we can also obtain the convergences of corresponding Markov processes in the sense of finite dimensional distributions. Let $(\EE^n,\FF^n)$ be in Theorem~\ref{THM45} (or Corollary~\ref{COR48}) and $X^n$ be its associated diffusion on $\mathbb{R}$. Further let $(\EE^\dagger, \FF^\dagger)$ be one of $(\EE,\FF)$, $(\EE^\s,\FF^\s)$ and $(\EE^\i,\FF^\i)$ and denote its associated Markov process by $X^\dagger=(X^\dagger_t)_{t\geq 0}$. Write $(\mathbf{P}_x^n)_{x\in \mathbb{R}}$, $(\mathbf{P}_x)_{x\in E}$ ($E=\mathbb{R}$ or $\mathbb{G}$) for the probability measures of $X^n$ and $X^\dagger$ respectively. Take a function $h\in L^2(\mathbb{R},m)=L^2(\mathbb{G},m)$ and set
\[
	\mathbf{P}^n_{h\cdot m}[\cdot]:= \int_\mathbb{R}h(x)m(dx)\mathbf{P}^n_x[\cdot], \quad \mathbf{P}_{h\cdot m}[\cdot]:= \int_\mathbb{R}h(x)m(dx)\mathbf{P}_x[\cdot]. 
\]
The expectation with respect to $\mathbf{P}^n_{h\cdot m}$ (resp. $\mathbf{P}_{h\cdot m}$) is denoted by $\mathbf{E}^n_{h\cdot m}$ (resp. $\mathbf{E}_{h\cdot m}$). 
Then the following result holds. The proof is direct by using Proposition~\ref{PRO42}, see  \cite[Proposition~4.3]{LUY17}. 

\begin{corollary}\label{COR49}
Assume $(\EE^n,\FF^n)$ converges to $(\EE^\dagger, \FF^\dagger)$ in the sense of Mosco and fix $h\in L^2(\mathbb{R},m)$. Then for any $k\geq 1$, $0\leq t_1<\cdots<t_k<\infty$ and $f_i\in \mathcal{B}_b(\mathbb{R})\cap L^2(\mathbb{R},m)$ with $1\leq i\leq k$, it holds that
\begin{equation}\label{EQ4NEN}
	\lim_{n\rightarrow \infty} \mathbf{E}^n_{h\cdot m}\left[f_1(X^n_{t_1})\cdots f_k(X^n_{t_k})\right]=\mathbf{E}_{h\cdot m}\left[f_1(X^\dagger_{t_1})\cdots f_k(X^\dagger_{t_k})\right]. 
\end{equation}
\end{corollary}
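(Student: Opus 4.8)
The plan is to reduce the probabilistic identity \eqref{EQ4NEN} to the strong $L^2$-convergence of semigroups supplied by Proposition~\ref{PRO42}, and then to propagate that convergence through the nested semigroup representation of the finite dimensional distributions by a short induction. First I would rewrite both sides by the Markov property. Setting $s_1:=t_1$ and $s_j:=t_j-t_{j-1}$ for $2\leq j\leq k$ (so that $s_j>0$ for $j\geq 2$, while possibly $s_1=0$), and writing $T^n_t$, $T^\dagger_t$ for the $L^2$-semigroups of $X^n$ and $X^\dagger$ (which coincide $m$-a.e.\ with the transition semigroups $T^n_tf=\mathbf{E}^n_\cdot[f(X^n_t)]$ and $T^\dagger_tf=\mathbf{E}_\cdot[f(X^\dagger_t)]$), iterating the Markov property gives
\[
	\mathbf{E}^n_{h\cdot m}\left[f_1(X^n_{t_1})\cdots f_k(X^n_{t_k})\right]=(h,\Phi^n)_H,\qquad \Phi^n:=T^n_{s_1}\bigl(f_1\, T^n_{s_2}(f_2\cdots T^n_{s_k}f_k)\bigr),
\]
and analogously the right-hand side of \eqref{EQ4NEN} equals $(h,\Phi^\dagger)_H$ with $\Phi^\dagger:=T^\dagger_{s_1}(f_1\,T^\dagger_{s_2}(f_2\cdots T^\dagger_{s_k}f_k))$. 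Here each intermediate product $f_j\cdot(\cdots)$ stays in $H=L^2(\mathbb{R},m)$ because $f_j$ is bounded, each $f_j$ is a legitimate semigroup input since $f_j\in L^2(\mathbb{R},m)$, and the identification $L^2(\mathbb{R},m)=L^2(\mathbb{G},m)$ lets the forms with state spaces $\mathbb{R}$ and $\mathbb{G}$ act on the common space $H$. Since $h\in L^2(\mathbb{R},m)$, it then suffices to prove $\Phi^n\to\Phi^\dagger$ strongly in $H$, whence $(h,\Phi^n)_H\to(h,\Phi^\dagger)_H$ at once.

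The core of the argument is a downward induction. I would define $u^n_k:=T^n_{s_k}f_k$ and $u^n_j:=T^n_{s_j}(f_j u^n_{j+1})$ for $j<k$, and analogously $u^\dagger_j$, so that $\Phi^n=u^n_1$ and $\Phi^\dagger=u^\dagger_1$. The claim to establish is that $u^n_j\to u^\dagger_j$ strongly in $H$ for every $j$. The base case $j=k$ is exactly Proposition~\ref{PRO42} applied to $f_k\in H$. For the inductive step, assuming $u^n_{j+1}\to u^\dagger_{j+1}$ strongly, boundedness of $f_j$ yields $\|f_j u^n_{j+1}-f_j u^\dagger_{j+1}\|_H\leq\|f_j\|_\infty\|u^n_{j+1}-u^\dagger_{j+1}\|_H\to 0$, and then I would split
\[
	T^n_{s_j}(f_j u^n_{j+1})-T^\dagger_{s_j}(f_j u^\dagger_{j+1})=T^n_{s_j}\bigl(f_j u^n_{j+1}-f_j u^\dagger_{j+1}\bigr)+\bigl(T^n_{s_j}-T^\dagger_{s_j}\bigr)(f_j u^\dagger_{j+1}).
\]
The first term is bounded in $H$-norm by $\|f_j u^n_{j+1}-f_j u^\dagger_{j+1}\|_H\to 0$, using that the symmetric Markovian semigroups $T^n_{s_j}$ are contractions on $H$ \emph{uniformly} in $n$; the second term tends to $0$ strongly by Proposition~\ref{PRO42} applied to the fixed function $f_j u^\dagger_{j+1}\in H$. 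This closes the induction and delivers $\Phi^n\to\Phi^\dagger$.

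I do not expect a genuine obstacle once Proposition~\ref{PRO42} is available; the only delicate point is the interaction between the multiplication operators by the bounded $f_j$ (which are $H$-bounded but do not commute with the semigroups) and the fact that Mosco convergence gives strong, not norm-operator, convergence $T^n_t\to T^\dagger_t$. This is precisely what the two-term splitting resolves: the varying input $f_j u^n_{j+1}$ is absorbed into a uniform contraction estimate, while Proposition~\ref{PRO42} is reserved for the single fixed test function $f_j u^\dagger_{j+1}$. A minor caveat is the degenerate case $s_1=0$ (when $t_1=0$), in which the outermost operator is $T^n_{s_1}=T^\dagger_{s_1}=\id$; there one simply pairs $hf_1\in H$ directly against $u^n_2$ and invokes the already-proved convergence $u^n_2\to u^\dagger_2$.
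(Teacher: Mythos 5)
Your argument is correct and is exactly the route the paper intends: the paper's proof consists of the single remark that the claim ``is direct by using Proposition~\ref{PRO42}'' (citing \cite[Proposition~4.3]{LUY17}), and your nested semigroup representation plus the downward induction --- splitting $T^n_{s_j}(f_ju^n_{j+1})-T^\dagger_{s_j}(f_ju^\dagger_{j+1})$ into a uniform-contraction term and a fixed-test-function term handled by Proposition~\ref{PRO42} --- is precisely the standard way to make that remark rigorous. Your handling of the degenerate case $t_1=0$ and of the identification $L^2(\mathbb{R},m)=L^2(\mathbb{G},m)$ matches the paper's own remark following the corollary, so there is nothing to add.
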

\begin{remark}
In the case of impermeable pattern or semi-permeable pattern, $E=\mathbb{G}$. Thus $f_i$ should be replaced by a suitable measurable function $\breve{f}_i$ on $\mathbb{G}$ in the right side of \eqref{EQ4NEN}. Clearly, $f_i=\breve{f}_i$ apart from $0$ (or $0\pm$). Thanks to \cite[Theorem~4.2.3]{FOT11}, we know that
\[
	\mathbf{E}_{h\cdot m}\left[\breve{f}_1(X^\dagger_{t_1})\cdots \breve{f}_k(X^\dagger_{t_k})\right]=\mathbf{E}_{h\cdot m}\left[\hat{f}_1(X^\dagger_{t_1})\cdots \hat{f}_k(X^\dagger_{t_k})\right],
\]
if $\hat{f}_i$ is another appropriate version of $f_i$ on $\mathbb{G}$, i.e. $\hat{f}_i(x)=f_i(x)$ for $x\neq 0$. So in abuse of symbols, we still use $f_i$ in the right side of \eqref{EQ4NEN}.  

On the other hand, the convergence in \eqref{EQ4NEN}  is weaker than the weak convergence of $\{\mathbf{P}^n_{h\cdot m}: n\geq 1\}$, by realizing which as  a family of probability measures (suppose $\int hdm=1$) on the space $C([0,\infty), \mathbb{R})$ of continuous paths or Skorokhod space $D([0, \infty),\mathbb{R})$ of c\`adl\`ag paths. However for the weak convergence, we are stuck in the trouble that $X^\dagger$ might live in $\mathbb{G}$ and $C([0,\infty), \mathbb{G})$ (resp. $D([0, \infty),\mathbb{G})$) differs from $C([0,\infty), \mathbb{R})$ (resp. $D([0, \infty),\mathbb{R})$) significantly.
\end{remark}

Let us briefly explain the technical condition \eqref{EQ4LNM} in Theorem~\ref{THM45}. As mentioned in Remark~\ref{RM43}, $m$ is usually taken to be the Lebesgue measure in the thermal conduction. Without loss of generality, we take $\varepsilon_n=1/n$ further. Then the first part of \eqref{EQ4LNM} becomes
\begin{equation}\label{EQ4NIG}
	\lim_{n\rightarrow \infty} \frac{\bar{\gamma}(n)}{n}=0. 
\end{equation}
It has no effects on the semi-permeable and permeable patterns. However, in the impermeable pattern, \eqref{EQ4NIG} causes that the divergence of $\bar{\gamma}(n)$ must be slower than $n$. We believe this restriction is not essential. Indeed, the convergence of the phase $\bar{\gamma}=\infty$ is proved for the Brownian case  without \eqref{EQ4NIG} in Corollary~\ref{COR48}. On the other hand, the second part of \eqref{EQ4LNM} is
\begin{equation}\label{EQ4NLN}
	\lim_{n\rightarrow \infty}\frac{\lambda^*(n)}{n}=0. 
\end{equation}
This is a very mild assumption. It admits $\lambda$ to be not absolutely continuous. For example, let 
\begin{equation}\label{EQ4LDX}
	d\lambda=dx+d\mathfrak{c},
\end{equation}
where $\mathfrak{c}$ is the Cantor function with $\mathfrak{c}(x)=0$ for any $x\leq 0$ and $\mathfrak{c}(x)=1$ for any $x\geq 1$. Then $\lambda^*(n)\leq 1$ and \eqref{EQ4NLN} holds, but $\lambda$ is not absolutely continuous. When $\lambda$ is absolutely continuous, write $a(x):=1/\ss'(x)$ for the thermal conductivity. Since 
\[
	\lambda\left(\left[y,y+\frac{1}{n}\right]\right)=\int_y^{y+\frac{1}{n}}\frac{1}{a(x)}dx,
\]
we find that the condition $a(x)\geq \delta$ a.e. with some constant $\delta>0$ implies \eqref{EQ4NLN}. But \eqref{EQ4NLN} also admits $a$ to be very close to $0$. For example, take $0<\beta<1$ and 
\begin{equation}\label{EQ4AXX}
	a(x)=|x|^\beta\wedge 1,\quad x\in \mathbb{R}.
\end{equation}
Then $\lambda(dx):=\frac{1}{a(x)}dx$ satisfies \eqref{EQ4NLN}.
 
\subsection{Brownian case of phase transition}
The short subsection is to present the Brownian case of Theorem~\ref{THM45}, in which the phase transition becomes more complete.

\begin{corollary}\label{COR48}
Let $m$ and $\lambda$ be the Lebesgue measure. Then the assertions in Theorem~\ref{THM45} hold without the condition \eqref{EQ4LNM}. Particularly, take $\alpha\in \mathbb{R}, \kappa>0$ and set 
\[
	\gamma_n(dx)=(\kappa\varepsilon_n)^\alpha dx. 
\]
Then we have:
\begin{itemize}
\item[(1)] $\alpha<-1$: $(\EE^n,\FF^n)$ converges to the Dirichlet form \eqref{EQ3FUL} of two-sided reflecting Brownian motion on $\mathbb{G}$ in the sense of Mosco.
\item[(2)] $\alpha=-1$: $(\EE^n,\FF^n)$ converges to the Dirichlet form \eqref{EQ3FSUL} of snapping out Brownian motion on $\mathbb{G}$ with the parameter $\kappa$ in the sense of Mosco.
\item[(3)] $\alpha>-1$: $(\EE^n,\FF^n)$ converges to the Dirichlet form $(\frac{1}{2}\mathbf{D}, H^1(\mathbb{R}))$ of one-dimensional  Brownian motion on $\mathbb{R}$ in the sense of Mosco.
\end{itemize}
\end{corollary}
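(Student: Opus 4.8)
The plan is to reduce to Theorem~\ref{THM45} wherever the condition \eqref{EQ4LNM} is automatic, and to rework only the impermeable phase. In the Brownian case $m^*(n)=\lambda^*(n)=\varepsilon_n\to 0$, so \eqref{EQ4LNM} collapses to $\bar\gamma(n)\varepsilon_n\to 0$. If $\bar\gamma=\lim_n\bar\gamma(n)<\infty$ then $\{\bar\gamma(n)\}$ is bounded and $\bar\gamma(n)\varepsilon_n\to 0$ holds for free; hence the semi-permeable phase $0<\bar\gamma<\infty$ and the permeable phase $\bar\gamma=0$ are immediate from Theorem~\ref{THM45}(2) and (3), the limiting forms \eqref{EQ4FSF} and \eqref{EQ4FIF} specialising (since $m=\lambda$ is Lebesgue, so $df/d\lambda=f'$) to \eqref{EQ3FSUL} and to $(\tfrac12\mathbf{D},H^1(\mathbb{R}))$. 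For the explicit barrier I would simply compute $\bar\gamma(n)=\gamma_n(I_n)=(\kappa\varepsilon_n)^\alpha\cdot 2\varepsilon_n=2\kappa^\alpha\varepsilon_n^{\alpha+1}$, whence $\bar\gamma=\infty,\ 2/\kappa,\ 0$ according as $\alpha<-1,\ \alpha=-1,\ \alpha>-1$; for $\alpha=-1$ the semi-permeable parameter $2/\bar\gamma=\kappa$ matches the SNOB form \eqref{EQ3FSUL}. This settles phases (2) and (3) and leaves only $\bar\gamma=\infty$ (i.e. $\alpha<-1$), whose limit \eqref{EQ4FFLG} specialises to \eqref{EQ3FUL}.

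For $\bar\gamma=\infty$ the recovery (limsup) half needs no change: I would keep the interpolant $g_n$ of \eqref{EQ4GNI} verbatim. Because $g$ is fixed, $\|g_n-g\|_H\to0$ follows from $|I_n|=2\varepsilon_n\to0$ and the $L^2$-continuity of translation (not from \eqref{EQ4LNM}), while $\EE^n(g_n,g_n)=\EE(g,g)+(g(0+)-g(0-))^2/(2\bar\gamma(n))\to\EE(g,g)$ since $\bar\gamma(n)\to\infty$.

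The substantive step is the liminf inequality $\EE(f,f)\le\liminf_n\EE^n(f_n,f_n)$ for $f_n$ converging weakly to $f$ (we may assume the right-hand side is finite and pass to a subsequence realising it, so that $M:=\sup_n\EE^n(f_n,f_n)<\infty$). The proof of Theorem~\ref{THM45}(1) obtained it through the strong estimate \eqref{EQ4FNFN}, $\|f_n-\breve{f}_n\|_H\to0$, and this is exactly where $\bar\gamma(n)\varepsilon_n\to0$ was spent. I would instead prove only the condition-free fact that $\breve{f}_n:=f_n\circ T_{\varepsilon_n}$ still converges \emph{weakly} to $f$. Testing against any $g\in H$ and changing variables, $\int_\mathbb{R}\breve{f}_n g\,dx=\int_\mathbb{R}f_n g_n\,dx$ with $g_n(y):=g(y-\varepsilon_n)\mathbf{1}_{\{y>\varepsilon_n\}}+g(y+\varepsilon_n)\mathbf{1}_{\{y<-\varepsilon_n\}}$; since $\|g_n-g\|_H\to0$ by translation continuity and $|I_n|\to0$, and $\{f_n\}$ is bounded in $H$ with $f_n\to f$ weakly, this gives $\int\breve{f}_n g\to\int fg$. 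The only input is $\varepsilon_n\to0$. The change of variables $\lambda_n|_{I_n^c}=T_{\varepsilon_n}^{\#}\lambda$ simultaneously yields $\EE(\breve{f}_n,\breve{f}_n)=\EE^n(f_n,f_n)-\tfrac12\int_{I_n}(df_n/d\gamma_n)^2\,d\gamma_n\le M$, so $\breve{f}_n$ has uniformly bounded energy no matter how large $\gamma_n$ is. Finally, $(\EE,\FF)$ being a closed (hence convex and strongly lower semicontinuous, therefore weakly lower semicontinuous) form, $\breve{f}_n\rightharpoonup f$ with $\sup_n\EE(\breve{f}_n,\breve{f}_n)\le M$ forces $f\in\FF$ and $\EE(f,f)\le\liminf_n\EE(\breve{f}_n,\breve{f}_n)\le\liminf_n\EE^n(f_n,f_n)$; concretely I would pass to a subsequence realising the liminf and run the Banach--Saks and convexity argument already used for Theorem~\ref{THM45}(3), for which the estimate $\EE(k^{-1}\sum_{n\le k}\breve{f}_n,\,k^{-1}\sum_{n\le k}\breve{f}_n)\le k^{-1}\sum_{n\le k}\EE(\breve{f}_n,\breve{f}_n)$ suffices.

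The main obstacle is precisely this liminf step: once \eqref{EQ4LNM} is removed, $\breve{f}_n$ no longer approximates $f_n$ in $H$, so the lower bound must be squeezed out of weak $L^2$ convergence alone. The decisive point making this possible is that the energy of $\breve{f}_n$ is dominated by $\EE^n(f_n,f_n)$ uniformly in the strength of the barrier $\gamma_n$, which lets weak lower semicontinuity of the fixed limit form do the rest; everything else (the other two phases and the recovery sequences) is either automatic or identical to Theorem~\ref{THM45}.
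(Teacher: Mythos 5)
Your argument is correct and follows essentially the same route as the paper's: reduce to the impermeable phase $\bar\gamma=\infty$ (the other two phases satisfying \eqref{EQ4LNM} automatically since $m^*(n)=\lambda^*(n)=\varepsilon_n$), keep a recovery sequence of the form \eqref{EQ4GNI}, and for the liminf inequality replace $f_n$ by a surrogate in $\FF$ whose energy is dominated by $\EE^n(f_n,f_n)$, show the surrogate still converges \emph{weakly} to $f$, and conclude by weak lower semicontinuity of the closed form $(\EE,\FF)$. The only (harmless) divergence is in how that weak convergence is checked: the paper freezes $f_n$ on $I_n$ at its boundary values and uses the pointwise bound $|f_n(\pm\varepsilon_n)|^2\le 4\EE^n_1(f_n,f_n)$, whereas you keep the shift $f_n\circ T_{\varepsilon_n}$ and transfer the translation onto the test function via a change of variables, using only the $L^2$-continuity of translation; both are valid.
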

\begin{proof}
Note that $\gamma_n(I_n)=2\kappa^\alpha\cdot \varepsilon_n^{\alpha+1}$. Thus it suffices to prove the case $\bar{\gamma}=\infty$ without \eqref{EQ4LNM}. We still denote the Dirichlet form \eqref{EQ3FUL} by $(\EE,\FF)$. Suppose $\{f_n\}$ converges to $f$ weakly in $H=L^2(\mathbb{R})=L^2(\mathbb{G})$ and $\liminf_{n\rightarrow \infty}\EE^n(f_n,f_n)\leq \sup_{n}\EE^n(f_n,f_n)=M<\infty$. Set
\[
	\breve{f}_n(x):=\left\lbrace \begin{aligned}
		&f_n(-\varepsilon_n),\quad x\in (-\varepsilon_n, 0-],\\
		&f_n(\varepsilon_n),\quad x\in [0+,\varepsilon_n),\\
		 &f_n(x),\quad x\in I_n^c. 
	\end{aligned}\right.
\]
Clearly, $\breve{f}_n\in \FF$. For any $g\in H$, we have
\[
	\left(f_n-\breve{f}_n, g\right)_{H}=\int_{I_n} f_n(x)g(x)dx-f_n(-\varepsilon_n)\int_{-\varepsilon_n}^0g(x)dx-f_n(\varepsilon_n)\int_0^{\varepsilon_n}g(x)dx. 
\]
The weak convergence of $\{f_n\}$ implies $K:=\sup_{n}\|f_n\|^2_{H}<\infty$. Thus as $n\rightarrow \infty$, 
\[
	\left|\int_{I_n} f_n(x)g(x)dx\right|^2\leq K\cdot \int_{I_n}g(x)^2dx\rightarrow 0. 
\]
Since $|f_n(\varepsilon_n)|\leq |f_n(x)|+|\int_{\varepsilon_n}^xf'_n(y)dy|$ for any $x>\varepsilon_n$, it follows that
\begin{equation}\label{EQ4FNN}
	|f_n(\varepsilon_n)|^2\leq 2\int_{\varepsilon_n}^{1+\varepsilon_n} f_n(x)^2dx+2\int_{\varepsilon_n}^\infty f'_n(y)^2dy\leq 4\EE^n_1(f_n,f_n). 
\end{equation}
This implies $\left|f_n(\varepsilon_n)\int^{\varepsilon_n}_0g(x)dx\right|\leq 4(M+K)\left|\int_{-\varepsilon_n}^0g(x)dx\right|\rightarrow 0$ as $n\rightarrow \infty$. Similarly, we can deduce that $f_n(-\varepsilon_n)\int_{-\varepsilon_n}^0g(x)dx\rightarrow 0$. As a consequence, we can conclude $\breve{f}_n$ converges to $f$ weakly in $H$, and
\[
\EE(f,f)\leq \liminf_{n\rightarrow \infty} \EE(\breve{f}_n,\breve{f}_n)\leq\liminf_{n\rightarrow \infty}\EE^n(f_n,f_n). 
\]
On the other hand, let $g\in H$ with $\EE(g,g)<\infty$. For each $n$, define a function $g_n$ as follows:
\[
g_n|_{I^c_n}:=g|_{I^c_n},\quad 	g_n(x):=g(-\varepsilon_n)+c_n\cdot\int_{-\varepsilon_n}^x d\gamma_n, \quad x\in I_n,
\]
where $c_n:=(g(\varepsilon_n)-g(-\varepsilon_n))/\bar{\gamma}(n)$. We assert $\|g_n-g\|_H\rightarrow 0$ as $n\rightarrow \infty$. In fact, mimicking \eqref{EQ4FNN}, we can obtain for any $x\in I_n$, 
\[
|g_n(x)|^2\leq |g( \varepsilon_n)|^2+ |g(-\varepsilon_n)|^2\leq 8\EE_1(g,g). 
\]
It follows that
\[
\|g_n-g\|_H^2\leq 2\int_{I_n}g(x)^2dx+2\int_{I_n}g_n(x)^2dx\leq 2\int_{I_n}g(x)^2dx+ 32\EE_1(g,g)\cdot \varepsilon_n\rightarrow 0. 
\]
Clearly, $g_n\in \FF^n$ and we can deduce that
\[
\begin{aligned}
	\EE^n(g_n,g_n)&=\frac{1}{2}\int_{I^c_n}g'(x)^2dx+\frac{1}{2}c_n^2\cdot \bar{\gamma}(n) \\
		&\leq \frac{1}{2}\int_{I^c_n}g'(x)^2dx+\frac{32\EE_1(g,g)^2}{\bar{\gamma}(n)}. 
\end{aligned}
\]
Since $\bar{\gamma}(n)\rightarrow \infty$, we can conclude 
\[
	\limsup_{n\rightarrow \infty}\EE^n(g_n,g_n)\leq \limsup_{n\rightarrow \infty}\frac{1}{2}\int_{\mathbb{G}}g'(x)^2dx=\EE(g,g). 
\]
That completes the proof. 
\end{proof}

\subsection{Continuity of the phase transition}

This subsection is to derive the continuity of the phase transition in Theorem~\ref{THM45} in the sense that the Dirichlet forms, which describe the phases, are continuous in the parameter $\bar{\gamma}$.  

To show this continuity,  let us make some notations for convenience. For any $\bar{\gamma}\in [0,\infty]$, write $(\mathcal{E}^{\bar{\gamma}},\mathcal{F}^{\bar{\gamma}})$ for the limit in Theorem~\ref{THM45} that corresponds to the total resistance $\bar{\gamma}$. In other words, 
\begin{itemize}
\item[(1)] $(\mathcal{E}^\infty,\mathcal{F}^\infty):=(\EE,\FF)$ given by \eqref{EQ4FFLG};
\item[(2)] $(\mathcal{E}^{\bar{\gamma}},\mathcal{F}^{\bar{\gamma}}):=(\EE^\s,\FF^\s)$ given by \eqref{EQ4FSF} with $\kappa=2/\bar{\gamma}$ for any $0<\bar{\gamma}<\infty$;
\item[(3)] $(\mathcal{E}^0,\mathcal{F}^0):=(\EE^\i,\FF^\i)$ given by \eqref{EQ4FIF}.
\end{itemize}
The following result states that $[0,\infty]\ni\bar{\gamma} \mapsto (\mathcal{E}^{\bar{\gamma}}, \mathcal{F}^{\bar{\gamma}})$ is continuous in the sense of Mosco.

\begin{theorem}
Let $\{\bar{\gamma}^l: l\geq 1\}$ be a sequence in $[0,\infty]$ such that $\lim_{l\rightarrow\infty} \bar{\gamma}^l=\bar{\gamma}\in [0,\infty]$. Then $(\mathcal{E}^{\bar{\gamma}^l},\mathcal{F}^{\bar{\gamma}^l})$ is convergent to $(\mathcal{E}^{\bar{\gamma}},\mathcal{F}^{\bar{\gamma}})$ in the sense of Mosco as $l\rightarrow \infty$. 
\end{theorem}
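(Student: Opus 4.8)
The plan is to observe that all three limit forms are members of a single monotone, scalar-parameter family of closed forms sharing the same principal part $\EE$ and the same boundary penalty, which reduces the Mosco continuity to monotonicity plus weak lower semicontinuity, with none of the geometric rescaling that complicated Theorem~\ref{THM45}. Concretely, for $c\in[0,\infty]$ and $f\in H=L^2(\mathbb{G},m)$ I set $J(f):=(f(0+)-f(0-))^2$ (well defined for $f\in\FF$, since $f_\pm$ is continuous up to $0\pm$) and
\[
 \mathcal{E}^c(f,f):=\EE(f,f)+\tfrac{1}{2c}\,J(f),
\]
with the conventions $\tfrac{1}{2\infty}:=0$, $\EE(f,f):=\infty$ for $f\notin\FF$, and for $c=0$ that $\tfrac10\cdot 0:=0$ while $\tfrac10\cdot t:=\infty$ for $t>0$. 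Since $\kappa/4=1/(2\bar\gamma)$ when $\kappa=2/\bar\gamma$, one checks that $\mathcal{E}^c$ is exactly $(\mathcal{E}^{\bar\gamma},\mathcal{F}^{\bar\gamma})$ for $c=\bar\gamma$: the penalty vanishes at $c=\infty$ (recovering \eqref{EQ4FFLG}) and forces $f(0+)=f(0-)$ at $c=0$ (recovering \eqref{EQ4FIF}). I will record two elementary facts: (i) $c\mapsto\mathcal{E}^c(f,f)$ is nonincreasing and right-continuous, i.e.\ $\lim_{c\downarrow\bar\gamma}\mathcal{E}^c(f,f)=\mathcal{E}^{\bar\gamma}(f,f)$ for every $f$; and (ii) each $\mathcal{E}^c$, being a regular Dirichlet form by \eqref{EQ4FFLG}, Proposition~\ref{PRO313} and \eqref{EQ4FIF}, is lower semicontinuous for weak convergence in $H$ — the same property of closed forms already used silently in the proof of Theorem~\ref{THM45}, and which follows from the monotone Yosida approximation $\mathcal{E}^{c,(\beta)}(f,f)=(B_\beta f,f)_H$ by bounded nonnegative self-adjoint operators $B_\beta$ increasing to $\mathcal{E}^c$.

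For the recovery condition of Definition~\ref{DEF41}(2) I will simply take the constant sequence $u_l\equiv u$. If $\mathcal{E}^{\bar\gamma}(u,u)=\infty$ there is nothing to prove, so assume $u\in\mathcal{F}^{\bar\gamma}$. When $\bar\gamma>0$ one has $\bar\gamma^l>0$ for all large $l$, hence $u\in\FF=\mathcal{F}^{\bar\gamma^l}$ and $\mathcal{E}^{\bar\gamma^l}(u,u)=\EE(u,u)+\tfrac{1}{2\bar\gamma^l}J(u)\to\mathcal{E}^{\bar\gamma}(u,u)$, using $\tfrac{1}{2\bar\gamma^l}\to\tfrac{1}{2\bar\gamma}$; when $\bar\gamma=0$ one has $J(u)=0$, so $\mathcal{E}^{\bar\gamma^l}(u,u)=\EE(u,u)=\mathcal{E}^0(u,u)$ for every $l$. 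In all cases $\limsup_l\mathcal{E}^{\bar\gamma^l}(u,u)\le\mathcal{E}^{\bar\gamma}(u,u)$.

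For the lower-bound condition of Definition~\ref{DEF41}(1), let $u_l\rightharpoonup u$ weakly in $H$. Fix any $c>\bar\gamma$ (and $c=\infty$ when $\bar\gamma=\infty$). Since $\bar\gamma^l\to\bar\gamma<c$, eventually $\bar\gamma^l\le c$, whence $\mathcal{E}^{\bar\gamma^l}(u_l,u_l)\ge\mathcal{E}^{c}(u_l,u_l)$ by monotonicity (i); weak lower semicontinuity (ii) of the closed form $\mathcal{E}^c$ then gives $\liminf_l\mathcal{E}^{\bar\gamma^l}(u_l,u_l)\ge\mathcal{E}^c(u,u)$. Letting $c\downarrow\bar\gamma$ and invoking the right-continuity in (i) yields $\liminf_l\mathcal{E}^{\bar\gamma^l}(u_l,u_l)\ge\mathcal{E}^{\bar\gamma}(u,u)$, which is (1); the case $\bar\gamma=\infty$ is the single choice $c=\infty$, using $\mathcal{E}^{\bar\gamma^l}\ge\EE=\mathcal{E}^\infty$ and weak lower semicontinuity of $\EE$.

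The decisive simplification over Theorem~\ref{THM45} is that here the approximating forms differ only through a scalar coefficient multiplying a \emph{fixed} penalty $J$, so they are directly comparable and no change of geometry ($\lambda_n$ versus $\lambda$) intervenes; consequently the Banach--Saks/trace-compactness device of Theorem~\ref{THM45} is not needed. The step that genuinely requires care is the endpoint $\bar\gamma=0$, where the domain truly contracts from $\FF$ to $\FF^\i$: it is precisely the divergence $\tfrac1{2c}\to\infty$ combined with the monotone right-limit in (i) that recovers the constraint $u(0+)=u(0-)$ in the limit form. Beyond verifying (ii)—the weak lower semicontinuity of closed forms—the remaining arguments are routine.
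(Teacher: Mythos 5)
Your proof is correct. The recovery half (the constant sequence $u_l\equiv u$) and the impermeable endpoint $\bar\gamma=\infty$ coincide with the paper's argument, and for $0<\bar\gamma<\infty$ the two proofs differ only cosmetically: the paper bounds $\sup_l\left(u_l(0+)-u_l(0-)\right)^2$ by a constant via the form bound and then estimates the error term $\left(\tfrac{1}{2\bar\gamma}-\tfrac{1}{2\bar\gamma^l}\right)\left(u_l(0+)-u_l(0-)\right)^2$ directly, whereas you compare with the fixed closed form $\mathcal{E}^c$ for $c>\bar\gamma$ and pass to the right limit; both rest on the same ingredient, the weak lower semicontinuity of a single closed form. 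The genuine divergence is at $\bar\gamma=0$: the paper shows $u\in\FF^{\mathrm{i}}$ by repeating the Banach--Saks/Ces\`aro-mean device from the proof of the third assertion of Theorem~\ref{THM45} (upgrading weak $L^2$-convergence to $\EE$-q.e.\ convergence of Ces\`aro means in order to control the point values $u_l(0\pm)$), while you extract $\left(u(0+)-u(0-)\right)^2\leq 2c\,\liminf_l\mathcal{E}^{\bar\gamma^l}(u_l,u_l)$ for every $c>0$ directly from the weak lower semicontinuity of $\mathcal{E}^c$ --- which already carries the jump penalty --- and let $c\downarrow 0$. Your route is more elementary and correctly isolates what distinguishes this theorem from Theorem~\ref{THM45}: here all approximating forms share the same principal part $\EE$ and the same penalty functional $J$, so they are pointwise comparable and no change of geometry from $\lambda_n$ to $\lambda$ intervenes; this buys a uniform treatment of all three phases and dispenses with the compactness device. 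The only ingredient you should state with care is the weak lower semicontinuity of a closed quadratic form extended by $+\infty$ off its domain; your justification via the monotone Yosida approximation by bounded nonnegative self-adjoint operators is standard and is exactly the fact the paper uses silently in steps of the form $\EE(u,u)\leq\liminf_l\EE(u_l,u_l)$ for weakly convergent sequences.
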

\begin{proof}
We still denote $H:=L^2(\mathbb{R},m)=L^2(\mathbb{G},m)$ and for the sake of brevity, write $(\mathcal{E}^l, \mathcal{F}^l)$ for $(\mathcal{E}^{\bar{\gamma}^l},\mathcal{F}^{\bar{\gamma}^l})$. Without loss of generality, we could assume $0<\bar{\gamma}^l<\infty$ for any $l$. 

Firstly, we consider the case $\bar{\gamma}=\infty$. To prove the first item of Definition~\ref{DEF41}, suppose $\{u_l: l\geq 1\}$ converges weakly to $u\in H$ and 
\begin{equation}\label{EQ4LEL}
	\lim_{l\rightarrow \infty}\mathcal{E}^l(u_l,u_l)\leq \sup_{l}\cE^l(u_l,u_l)\leq M<\infty.
\end{equation}
Then it follows from $u_l\in \cF^l=\cF^\infty=\FF$ that 
\begin{equation}\label{EQ4EUU}
	\cE^\infty(u,u)\leq \liminf_{l\rightarrow \infty} \cE^\infty(u_l,u_l)\leq \liminf_{l\rightarrow\infty} \cE^l(u_l,u_l). 
\end{equation}
For the second item of Definition~\ref{DEF41}, let $u\in H$ with $\cE^\infty(u,u)<\infty$. Take $u_l:=u\in \cF^\infty=\FF=\cF^l$, we have
\[
\lim_{l\rightarrow \infty}\cE^l(u_l,u_l)=\lim_{l\rightarrow \infty}\cE^l(u,u)=\EE(u,u)=\cE^\infty(u,u). 
\] 

Secondly, we prove the case $0<\bar{\gamma}<\infty$. The second item of Definition~\ref{DEF41} could be checked by taking $u_l:=u$ as in the case $\bar{\gamma}=\infty$. It suffices to check the first item. Let $u_l$ and $u$ such that \eqref{EQ4LEL} holds. Take a constant $K>\bar{\gamma}$. Then for some integer $N$, $\bar{\gamma}^l<K$ for all $l>N$. It follows that
\[
\frac{1}{2K}\sup_{l>N}\left(u_l(0+)-u_l(0-)\right)^2\leq \sup_{l>N} \frac{1}{2\bar{\gamma}^l}\left(u_l(0+)-u_l(0-)\right)^2\leq \sup_{l>N}\cE^l(u_l,u_l)\leq M. 
\]
This implies $\sup_{l>N}\left(u_l(0+)-u_l(0-)\right)^2\leq 2KM$. As a consequence,
\[
\begin{aligned}
\cE^{\bar{\gamma}}(u,u)&\leq \liminf_{l\rightarrow \infty}\cE^{\bar{\gamma}}(u_l,u_l) \\
&=\liminf_{l>N,l\rightarrow\infty} \left(\cE^l(u_l,u_l)+\left(\frac{1}{2\bar{\gamma}}-\frac{1}{2\bar{\gamma}^l}\right)\cdot \left(u_l(0+)-u_l(0-)\right)^2 \right) \\
&=\liminf_{l>N,l\rightarrow\infty} \cE^l(u_l,u_l) \\
&=\liminf_{l\rightarrow\infty} \cE^l(u_l,u_l). 
\end{aligned}\]

Finally, let us consider the case $\bar{\gamma}=0$. For the second item of Definition~\ref{DEF41}, it is also very clear by taking $u_l:=u$, since $\cF^0=\FF^i\subset \cF^l$ for every $l$. For the first item, we still assume $u_l$ and $u$ satisfy \eqref{EQ4LEL}. We need only prove $u\in \cF^0$, which leads to 
\[
	\cE^0(u,u)=\cE^\infty(u,u)\leq \liminf_{l\rightarrow\infty}\cE^\infty(u_l,u_l)\leq \liminf_{l\rightarrow\infty}\cE^l(u_l,u_l). 
\]
This can be attained by mimicking the proof of the third assertion in Theorem~\ref{THM45}. That completes the proof.
\end{proof}
\begin{remark}
Note that $(\cE^0,\cF^0)$ is the darning of $(\cE^{\bar{\gamma}},\cF^{\bar{\gamma}})$ obtained by shorting $\{0+,0-\}$ into $0$ for any $\bar{\gamma}\in (0,\infty]$. The same fasion of Mosco convergence as the case $\bar{\gamma}=0$ was also considered in \cite{CP17} for the study of general darning transform. 
\end{remark}

\section{Boundary conditions of the flux at the barrier}\label{SEC44}

In this section, we shall consider the stiff problems in the context of heat equations in $\mathbb{R}$. Especially, the boundary conditions of the flux at the barrier will be derived for the three phases by means of Dirichlet forms.

\subsection{Heat equation with a normal barrier}

Take a function $a$ on $\mathbb{R}$ such that for some constants $\delta, C>0$,
\begin{equation}\label{EQ4ADA}
	\delta\leq a(x)\leq C,\quad \; \text{ a.e. }x\in \mathbb{R}.
\end{equation}
For any $\varepsilon>0$, let $b_\varepsilon$ be a function on $I_\varepsilon=(-\varepsilon,\varepsilon)$ such that for some constants $\delta_\varepsilon, C_\varepsilon>0$, 
\begin{equation}\label{EQ4BVD}
\delta_\varepsilon\leq b_\varepsilon(x) \leq C_\varepsilon \quad \; \text{ a.e. }x\in I_\varepsilon. 
\end{equation}
Set
\[
	a_\varepsilon(x)	:=\left\lbrace
	\begin{aligned}
	&a(x-\varepsilon),\quad x\geq \varepsilon, \\
	&b_\varepsilon(x),\quad x\in (-\varepsilon, \varepsilon), \\
	&a(x+\varepsilon),\quad x\leq -\varepsilon,
	\end{aligned}\right.
\]
and the stiff problem is concerned with the convergence of $u^\varepsilon$ (as $\varepsilon\downarrow 0$) in the heat equation
\begin{equation}\label{EQ4UVT}
\begin{aligned}
&\frac{\partial u^\varepsilon}{\partial t}(t,x)=\frac{1}{2}\nabla\left(a_\varepsilon(x)\nabla u^\varepsilon(t,x) \right),\quad t\geq 0,x\in \mathbb{R}, \\
&u^\varepsilon(0,\cdot)=u_0.
\end{aligned}
\end{equation}
The solution to \eqref{EQ4UVT} is considered to be a weak form as follows. 

\begin{definition}\label{DEF411}
A function $u^\varepsilon\in C_b\left([0,\infty), L^2(\mathbb{R})\right)\cap L^\infty\left([0,\infty),H^1(\mathbb{R})\right)$ is called a weak solution to \eqref{EQ4UVT} if $u^\varepsilon(0,\cdot)=u_0$, and for any $t>0$, $g\in C_c^\infty(\mathbb{R})$, 
\begin{equation}\label{EQ4RFX}
\int_\mathbb{R}(u_0(x)-u^\varepsilon(t,x))g(x)dx=\frac{1}{2}\int_0^t \int_\mathbb{R} a_\varepsilon(x)\nabla u^\varepsilon(s,x)\nabla g(x)dxds. 
\end{equation}
\end{definition}  

Though the well posedness of \eqref{EQ4UVT} is well known, we shall derive it by means of Dirichlet forms. Write 
\begin{equation}\label{EQ4LVX}
	\lambda_\varepsilon(dx):=\frac{1}{a_\varepsilon(x)}dx.
\end{equation}
Then \eqref{EQ4ADA} and \eqref{EQ4BVD} imply $\lambda_\varepsilon\in \mathscr{M}$ and denote its induced scale function by $\ss_\varepsilon$. Let $(\EE^\varepsilon,\FF^\varepsilon)$ be the Dirichlet form of the diffusion $X^\varepsilon$ with scale function $\ss_\varepsilon$. In other words, $(\EE^\varepsilon,\FF^\varepsilon)$ is \eqref{EQ4FFLR} with $\lambda_\varepsilon$ in \eqref{EQ4LVX} and $m$ being the Lebesgue measure on $\mathbb{R}$. Thanks to \cite[Theorem~3.2]{LY172}, $C_c^\infty(\mathbb{R})$ is a core of $(\EE^\varepsilon,\FF^\varepsilon)$ and for any $u,v\in \FF^\varepsilon$,
\[
	\EE^\varepsilon(u,v)=\frac{1}{2}\int_\mathbb{R} a_\varepsilon(x) u'(x)v'(x)dx. 
\]
Note that $\FF^\varepsilon= H^1(\mathbb{R})$ on account of $\delta \wedge \delta_\varepsilon\leq a_\varepsilon\leq C\vee C_\varepsilon$. Denote the semigroup of $X^\varepsilon$ by $(P^\varepsilon_t)_{t\geq 0}$. The following result claims the well posedness of \eqref{EQ4UVT}. 

\begin{lemma}\label{LM412}
Assume $u_0\in H^1(\mathbb{R})$. Then $u^\varepsilon(t,x):=P_t^\varepsilon u_0(x)$ is the unique weak solution to \eqref{EQ4UVT}. 
\end{lemma}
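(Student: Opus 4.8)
The plan is to establish existence and uniqueness separately, relying throughout on the self-adjoint semigroup $(P^\varepsilon_t)_{t\geq0}$ attached to the regular Dirichlet form $(\EE^\varepsilon,\FF^\varepsilon)$, where $\FF^\varepsilon=H^1(\mathbb{R})$ and $\EE^\varepsilon(u,v)=\frac12\int_{\mathbb{R}}a_\varepsilon u'v'\,dx$. The two-sided bound $\delta\wedge\delta_\varepsilon\le a_\varepsilon\le C\vee C_\varepsilon$ makes the $\EE^\varepsilon_1$-norm equivalent to the $H^1(\mathbb{R})$-norm, a fact I would use repeatedly.

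For existence I set $u^\varepsilon(t,\cdot):=P^\varepsilon_tu_0$ and first verify the regularity demanded by Definition~\ref{DEF411}. Strong continuity and $L^2$-contractivity of the Markovian semigroup give $u^\varepsilon\in C_b([0,\infty),L^2(\mathbb{R}))$. Since $u_0\in\FF^\varepsilon$, the spectral theorem for the self-adjoint generator yields $\EE^\varepsilon(P^\varepsilon_tu_0,P^\varepsilon_tu_0)\le\EE^\varepsilon(u_0,u_0)$ for all $t$, whence $\sup_{t\ge0}\|u^\varepsilon(t,\cdot)\|_{H^1}<\infty$ by the norm equivalence, i.e. $u^\varepsilon\in L^\infty([0,\infty),H^1(\mathbb{R}))$; and $u^\varepsilon(0,\cdot)=u_0$. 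For the weak identity \eqref{EQ4RFX} I would exploit that $(P^\varepsilon_t)$, being symmetric, is analytic, so $P^\varepsilon_su_0\in\mathcal{D}(\L^\varepsilon)$ for every $s>0$ and $\frac{d}{ds}(P^\varepsilon_su_0,g)_m=(\L^\varepsilon P^\varepsilon_su_0,g)_m=-\EE^\varepsilon(P^\varepsilon_su_0,g)$ for each $g\in\FF^\varepsilon$. Integrating over $[\delta,t]$ and letting $\delta\downarrow0$ — the integrand being controlled by $\sqrt{\EE^\varepsilon(u_0,u_0)\,\EE^\varepsilon(g,g)}$ uniformly in $s$ — produces $(u_0-u^\varepsilon(t,\cdot),g)_m=\int_0^t\EE^\varepsilon(u^\varepsilon(s,\cdot),g)\,ds$, which is exactly \eqref{EQ4RFX} once $\EE^\varepsilon(u,g)=\frac12\int_{\mathbb{R}}a_\varepsilon u'g'\,dx$ is unfolded and specialized to $g\in C^\infty_c(\mathbb{R})\subset\FF^\varepsilon$.

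For uniqueness, let $w(t,\cdot)$ be the difference of two weak solutions, so $w(0,\cdot)=0$, $w\in C_b([0,\infty),L^2)\cap L^\infty([0,\infty),H^1)$, and $(w(t,\cdot),g)_m=-\int_0^t\EE^\varepsilon(w(s,\cdot),g)\,ds$ for all $g\in C^\infty_c(\mathbb{R})$, hence for all $g\in\FF^\varepsilon$ by the $\EE^\varepsilon_1$-density of $C^\infty_c(\mathbb{R})$ in $H^1(\mathbb{R})$. I would then pass to the Laplace transform in $t$: the uniform bound $\sup_s\|w(s,\cdot)\|_{H^1}<\infty$ makes $\hat w(\alpha):=\int_0^\infty e^{-\alpha t}w(t,\cdot)\,dt$ a convergent Bochner integral in $H^1(\mathbb{R})$ for each $\alpha>0$, and since $\EE^\varepsilon(\cdot,g)$ is a bounded functional on $H^1$ it commutes with the integral. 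Transforming the identity (using $w(0,\cdot)=0$) gives $\alpha(\hat w(\alpha),g)_m+\EE^\varepsilon(\hat w(\alpha),g)=0$, i.e. $\EE^\varepsilon_\alpha(\hat w(\alpha),g)=0$ for all $g\in\FF^\varepsilon$. Taking $g=\hat w(\alpha)$ and invoking the coercivity $\EE^\varepsilon_\alpha(h,h)\ge\alpha\|h\|_{L^2}^2$ forces $\hat w(\alpha)=0$ for every $\alpha>0$; by uniqueness of the Laplace transform of the continuous $L^2$-valued map $t\mapsto w(t,\cdot)$, this yields $w\equiv0$, so $P^\varepsilon_\cdot u_0$ is the only weak solution.

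The one point requiring genuine care is uniqueness, because Definition~\ref{DEF411} grants the weak solution only low regularity — no a priori differentiability in $t$ and no membership of $u_0$ in $\mathcal{D}(\L^\varepsilon)$ — so a direct energy estimate tested against the time-dependent function $w(t,\cdot)$ is awkward. The Laplace-transform device circumvents this by turning the evolutionary identity into a single coercive, stationary variational equation. The accompanying technical verifications — that $\hat w(\alpha)$ genuinely lies in $H^1$, that the continuous form commutes with the Bochner integral, and the $\EE^\varepsilon_1$-density of $C^\infty_c(\mathbb{R})$ (already available as the core property recorded from \cite{LY172}) — are routine consequences of the uniform $H^1$-bound and present no real difficulty.
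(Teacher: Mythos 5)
Your proof is correct and follows essentially the same route as the paper's: the regularity of $u^\varepsilon_t=P^\varepsilon_tu_0$ is obtained from $\|P^\varepsilon_tu_0\|_{L^2}\leq\|u_0\|_{L^2}$ and $\EE^\varepsilon(P^\varepsilon_tu_0,P^\varepsilon_tu_0)\leq\EE^\varepsilon(u_0,u_0)$ together with the equivalence of the $\EE^\varepsilon_1$- and $H^1$-norms, and uniqueness is settled identically by Laplace-transforming the weak identity and using the coercivity of $\EE^\varepsilon_\alpha$. The only divergence is in verifying \eqref{EQ4RFX} for existence, where you differentiate $(P^\varepsilon_su_0,g)_m$ via analyticity of the symmetric semigroup and integrate, while the paper instead shows both sides of \eqref{EQ4RFX} are continuous in $t$ and have equal Laplace transforms via the resolvent identity \eqref{EQ4UGL}; both arguments are sound.
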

\begin{proof}


Let $(\EE^\varepsilon,\FF^\varepsilon)$ be above. Note that
\[
	u^\varepsilon_t(x)=P_t^\varepsilon u_0(x)=\mathbf{E}_x u_0(X^\varepsilon_t). 
\]
We assert $u^\varepsilon$ is a weak solution to \eqref{EQ4UVT}. Indeed, $u^\varepsilon_t=P_t^\varepsilon u_0\in \FF^\varepsilon = H^1(\mathbb{R})$ and clearly, $\|P_t^\varepsilon u_0\|_{L^2(\mathbb{R})}\leq \|u_0\|_{L^2(\mathbb{R})}$ and $t\mapsto \|P_t^\varepsilon  u_0\|_{L^2(\mathbb{R})}$ is continuous. Moreover, since $u_0\in H^1(\mathbb{R})= \FF^\varepsilon$, it follows from \cite[Lemma~1.3.3]{FOT11} that
\[
	\|u_t^\varepsilon\|_{H^1(\mathbb{R})}^2\leq \|u_0\|_{L^2(\mathbb{R})}^2+\frac{1}{\delta\wedge \delta_\varepsilon}\EE^\varepsilon(P_t^\varepsilon u_0, P_t^\varepsilon u_0)\leq \|u_0\|_{L^2(\mathbb{R})}^2+\frac{1}{\delta\wedge \delta_\varepsilon}\EE^\varepsilon(u_0,u_0).
\]
For any $g\in C_c^\infty(\mathbb{R})\subset \FF^\varepsilon$, we have
\[
	\left|\int_\mathbb{R} a_\varepsilon(x)(u^\varepsilon_s)'(x)g'(x)dx\right|=2\left|\EE^\varepsilon(P_su_0,g)\right|\leq 2\EE^\varepsilon(u_0,u_0)^{1/2}\cdot \EE^\varepsilon(g,g)^{1/2}. 
\] 
This indicates 
\begin{equation*}
	s\mapsto \int_\mathbb{R} a_\varepsilon(x)\nabla u^\varepsilon(s,x)\nabla g(x)dx
\end{equation*}
is locally integrable in $[0,\infty)$ and particularly, both the left side $\mathscr{L}_t$ and right side $\mathscr{R}_t$ of \eqref{EQ4RFX} are continuous in $t$.  Denote the resolvent of $X^\varepsilon$ by $(R^\varepsilon_\alpha)_{\alpha>0}$. Clearly, for any $\alpha>0$,
\begin{equation}\label{EQ4UGL}
	(u_0,g)_{L^2(\mathbb{R})}-\alpha(R^\varepsilon_\alpha u_0, g)_{L^2(\mathbb{R})}=\EE^\varepsilon(R^\varepsilon_\alpha u_0,g)=\int_0^\infty \mathrm{e}^{-\alpha t}\EE^\varepsilon(P_t^\varepsilon  u_0, g)dt.
\end{equation}   
This implies 
\[
	\int_0^\infty \mathrm{e}^{-\alpha t}\mathscr{L}_tdt=\int_0^\infty \mathrm{e}^{-\alpha t}\mathscr{R}_tdt,
\]	
and so that $\mathscr{L}_t=\mathscr{R}_t$ for any $t>0$ in the light of their continuities. 

We turn to prove the uniqueness. Suppose $u_0=0$ and $u^\varepsilon$ is a weak solution to \eqref{EQ4UVT}. Then $u^\varepsilon_t\in H^1(\mathbb{R})=\FF^\varepsilon$ and for any $g\in C_c^\infty(\mathbb{R})$, 
\begin{equation}\label{EQ4RUT}
-\int_\mathbb{R}u^\varepsilon_t(x)g(x)dx=\int_0^t \EE^\varepsilon(u^\varepsilon_s, g) ds.
\end{equation}
Note that $\sup_{t}\|u^\varepsilon_t\|_{H^1(\mathbb{R})}<\infty$. Thus for any $\alpha>0$, 
\[
	U_\alpha^\varepsilon (\cdot):=\int_0^\infty \mathrm{e}^{-\alpha t}u^\varepsilon_t(\cdot)dt \in H^1(\mathbb{R}).
\]
By performing the Laplace transform  at both sides of \eqref{EQ4RUT}, we obtain 
\[
\EE^\varepsilon_\alpha(U_\alpha^\varepsilon , g)=0, \quad \forall g\in C_c^\infty(\mathbb{R}). 
\]
This indicates $U_\alpha^\varepsilon =0$ in $L^2(\mathbb{R})$. Particularly, for any $\varphi\in C_c^\infty(\mathbb{R})$, 
\[
	\int_0^\infty \mathrm{e}^{-\alpha t} \left(u^\varepsilon_t, \varphi\right)_{L^2(\mathbb{R})}dt=(U^\varepsilon_\alpha, \varphi)_{L^2(\mathbb{R})}= 0,\quad \forall \alpha>0. 
\]  
Since $t\mapsto \left(u^\varepsilon_t, \varphi\right)_{L^2(\mathbb{R})}$ is continuous, we can conclude that $\left(u^\varepsilon_t, \varphi\right)_{L^2(\mathbb{R})}=0$ for any $\varphi\in C_c^\infty(\mathbb{R})$ and $t>0$. Therefore $u^\varepsilon=0$. 
\end{proof}

\subsection{Boundary conditions of the flux at a singular barrier}

Now we consider the convergence of $u^\varepsilon$ as $\varepsilon\downarrow 0$.
The expected limit is the solution to heat equation with the conductivity $a$ in \eqref{EQ4ADA}
\begin{equation}\label{EQ4UTT}
\begin{aligned}
	&\frac{\partial u}{\partial t}(t,x)=\frac{1}{2}\nabla\left(a(x)\nabla u(t,x) \right),\quad t\geq 0,x\in \mathbb{R},\\
	&u(0,\cdot)=u_0. 
\end{aligned}
\end{equation}
Similar to Definition~\ref{DEF411}, the weak solution to \eqref{EQ4UTT} is defined as follows.
\begin{definition}
Given a family $\mathscr{H}$ of space-time functions, $u$ is called a weak solution to \eqref{EQ4UTT}  in $\mathscr{H}$, if $u\in \mathscr{H}$, $u(0,\cdot)=u_0$ and for any $t>0$, $g\in C_c^\infty(\mathbb{R})$, 
\begin{equation}
\int_\mathbb{R}(u_0(x)-u(t,x))g(x)dx=\frac{1}{2}\int_0^t \int_\mathbb{R} a(x)\nabla u(s,x)\nabla g(x)dxds. 
\end{equation}
\end{definition}

Two families of space-time functions for the solutions to \eqref{EQ4UTT} will be considered: 
\[
\begin{aligned}
&\mathscr{H}(\mathbb{R}):= C_b\left([0,\infty),L^2(\mathbb{R})\right)\cap L^\infty\left([0,\infty),H^1(\mathbb{R})\right), \\
&\mathscr{H}(\mathbb{G}):= C_b\left([0,\infty),L^2(\mathbb{G})\right)\cap L^\infty\left([0,\infty),H^1(\mathbb{G})\right).
\end{aligned}
\]
Recall that $H^1(\mathbb{G})=\{u\in L^2(\mathbb{G}): u_\pm\in H^1(\mathbb{G}_\pm)\}$. Every function $u$ in $H^1(\mathbb{G})$ or $C(\mathbb{G})$ may be regarded as a discontinuous function on $\mathbb{R}$, which is continuous on $(-\infty, 0)$ and $(0, \infty)$ respectively and has finite left and right limits at $0$.

\begin{theorem}\label{THM414}
Assume $u_0\in H^1(\mathbb{R})$.  Take a decreasing sequence $\varepsilon_n\downarrow 0$ and write $u^n$ for $u^{\varepsilon_n}$, i.e. the unique weak solution to \eqref{EQ4UVT}. Set 
\[
	\bar{\gamma}(n):=\int_{-\varepsilon_n}^{\varepsilon_n} \frac{1}{b_{\varepsilon_n}(x)}dx.
\] 
Assume $\lim_{n\rightarrow \infty}\varepsilon_n\cdot\bar{\gamma}(n)=0$ and
\[
	\bar{\gamma}:=\lim_{n\rightarrow \infty}\bar{\gamma}(n) \quad  (\leq \infty)
\]
exists.
Then for any $t>0$, the limit $u_t$ of $u^n_t$ exists in $L^2(\mathbb{R})$ as $n\rightarrow \infty$. Furthermore, assume $a\in C(\mathbb{G})$, and write $u(t,x):=u_t(x), U_\alpha (\cdot):=\int_0^\infty \mathrm{e}^{-\alpha t}u_t(\cdot)dt$ for any $\alpha>0$. Then the following assertions hold:
\begin{itemize}
\item[(1)] $\bar{\gamma}=\infty$: $u$ is a weak solution to \eqref{EQ4UTT} in $\mathscr{H}(\mathbb{G})$. For any $\alpha>0$, $U_\alpha $ satisfies the following boundary condition at $0$:
\begin{equation}\label{EQ4UUO}
U'_\alpha (0+)=U'_\alpha (0-)=0. 
\end{equation}
If in addition
\begin{equation}\label{EQ4AXU}
	a(x)u'_0(x)\in H^1(\mathbb{G}),\quad u'_0(0\pm)=0,
\end{equation}
then for any $t>0$, $u_t$ also satisfies the boundary condition at $0$:  
\[
u'_t(0+)=u'_t(0-)=0. 
\]  
\item[(2)] $0<\bar{\gamma}<\infty$: $u$ is a weak solution to \eqref{EQ4UTT} in $\mathscr{H}(\mathbb{G})$. For any $\alpha>0$, $U_\alpha $ satisfies the following boundary condition at $0$ with $\kappa=2/\bar{\gamma}$: 
\begin{equation}\label{EQ4AUA}
a(0+) U'_\alpha (0+)=a(0-) U'_\alpha (0-)=\frac{\kappa}{2}\left(U_\alpha (0+)-U_\alpha (0-)\right). 
\end{equation}
If in addition \eqref{EQ4AXU} holds, then for any $t>0$, $u_t$ also satisfies the boundary condition at $0$:  
\[
a(0+)u'_t(0+)=a(0-)u'_t(0-)=\frac{\kappa}{2}\left(u_t(0+)-u_t(0-)\right).
\]
\item[(3)] $\bar{\gamma}=0$: $u$ is the unique weak solution to \eqref{EQ4UTT} in $\mathscr{H}(\mathbb{R})$. For any $t>0$, $u_t$ is continuous at $0$. 
\end{itemize}
Particularly, the weak solution to \eqref{EQ4UTT} is unique in $\mathscr{H}(\mathbb{R})$ but not unique in $\mathscr{H}(\mathbb{G})$. 
\end{theorem}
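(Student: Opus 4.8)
The plan is to push everything through the limit semigroup supplied by Mosco convergence, and then read off both the weak--solution property and the boundary conditions from the explicit generator domains of Proposition~\ref{PRO45}. Throughout, $\dagger$ is vacant, $\s$, or $\i$ according to whether $\bar{\gamma}=\infty$, $0<\bar{\gamma}<\infty$, or $\bar{\gamma}=0$, and $(\EE^\dagger,\FF^\dagger)$ denotes the corresponding limit form among \eqref{EQ4FFLG}, \eqref{EQ4FSF}, \eqref{EQ4FIF} (with $\kappa=2/\bar{\gamma}$ in the middle case).

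First I would establish the $L^2$-limit. Here $m$ is Lebesgue, $\lambda(dx)=\frac{1}{a(x)}dx$ and $\gamma_n(dx)=\frac{1}{b_{\varepsilon_n}(x)}dx$, so $m^*(n)=\varepsilon_n$ and, by \eqref{EQ4ADA}, $\lambda^*(n)\le \varepsilon_n/\delta$; hence $\lambda^*(n)m^*(n)\le \varepsilon_n^2/\delta\to 0$ while $\bar{\gamma}(n)m^*(n)=\varepsilon_n\bar{\gamma}(n)\to 0$ by hypothesis, so condition \eqref{EQ4LNM} holds. Theorem~\ref{THM45} then gives that $(\EE^n,\FF^n)$ converges to $(\EE^\dagger,\FF^\dagger)$ in the sense of Mosco, and Proposition~\ref{PRO42} upgrades this to strong $L^2$-convergence of the semigroups. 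Since $u^n_t=P^n_t u_0$ by Lemma~\ref{LM412}, we obtain $u^n_t\to P^\dagger_t u_0=:u_t$ in $L^2(\mathbb{R})$ for every $t>0$, which is the first assertion.

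Next I would identify $u$ as a weak solution and locate it in the right space. Note $u_0\in H^1(\mathbb{R})\subset \FF^\i\subset \FF^\s=\FF$, since $u_0$ is continuous at $0$ and $\int a\,(u_0')^2\le C\|u_0'\|_{L^2}^2<\infty$; thus $u_0$ lies in every limit form domain. For $g\in C_c^\infty(\mathbb{R})$ the values $g(0+)=g(0-)$ coincide, so the jump term in $\EE^\s$ is invisible and in all three phases $\EE^\dagger(v,g)=\tfrac12\int_{\mathbb{R}}a\,v'g'\,dx$, using the crucial computation $\frac{dv}{d\lambda}=a\,v'$ valid because $d\lambda=\frac1a dx$ with $a\in C(\mathbb{G})$. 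Integrating $\frac{d}{dt}(P^\dagger_t u_0,g)_{L^2(\mathbb{R})}=-\EE^\dagger(P^\dagger_t u_0,g)$ in $t$ (licit as $u_0,g\in\FF^\dagger$) yields $(u_0-u_t,g)_{L^2(\mathbb{R})}=\int_0^t\EE^\dagger(u_s,g)\,ds$, which is precisely the weak-solution identity of \eqref{EQ4UTT}. Membership in $\mathscr{H}(\mathbb{G})$ (resp.\ $\mathscr{H}(\mathbb{R})$ when $\bar{\gamma}=0$) follows from $u_t\in\FF^\dagger\subset H^1(\mathbb{G})$ (resp.\ $H^1(\mathbb{R})$), the contraction $\|u_t\|_{L^2}\le\|u_0\|_{L^2}$ with $L^2$-continuity of $t\mapsto u_t$, and the energy bound $\tfrac12\int a\,(u_t')^2\le\EE^\dagger(u_t,u_t)\le\EE^\dagger(u_0,u_0)=\tfrac12\int a\,(u_0')^2$, giving $\sup_t\|u_t\|_{H^1}<\infty$. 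For the boundary conditions, $U_\alpha=R^\dagger_\alpha u_0\in\mathcal{D}(\mathcal{L}^\dagger)$; since the domain description in Proposition~\ref{PRO45} forces $\frac{dU_\alpha}{d\lambda}=a\,U_\alpha'$ to be continuous up to $0\pm$ and $a\in C(\mathbb{G})$ is bounded below, $U_\alpha'(0\pm)$ are well defined, and the domain conditions translate into $a(0\pm)U_\alpha'(0\pm)=0$ (phase $\bar{\gamma}=\infty$, i.e.\ \eqref{EQ4UUO}), $a(0\pm)U_\alpha'(0\pm)=\tfrac{\kappa}{2}(U_\alpha(0+)-U_\alpha(0-))$ (phase $0<\bar{\gamma}<\infty$, i.e.\ \eqref{EQ4AUA}), and continuity at $0$ (phase $\bar{\gamma}=0$). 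For the stronger statements on $u_t$ itself, I would observe that \eqref{EQ4AXU}, together with the continuity $u_0(0+)=u_0(0-)$ forced by $u_0\in H^1(\mathbb{R})$, is exactly the membership $u_0\in\mathcal{D}(\mathcal{L}^\dagger)$ in each phase; since the semigroup preserves $\mathcal{D}(\mathcal{L}^\dagger)$, $u_t=P^\dagger_t u_0\in\mathcal{D}(\mathcal{L}^\dagger)$ for all $t$, and the same boundary conditions hold verbatim for $u_t$.

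Finally I would settle the uniqueness dichotomy. For $\bar{\gamma}=0$, a weak solution in $\mathscr{H}(\mathbb{R})$ is $H^1(\mathbb{R})$-valued, hence continuous at $0$ and lying in $\FF^\i$; mimicking the uniqueness argument of Lemma~\ref{LM412}, with $u_0=0$ the Laplace transform of the weak identity gives $\EE^\i_\alpha(U_\alpha,g)=0$ for all $g\in C_c^\infty(\mathbb{R})$, a core of $(\EE^\i,\FF^\i)$ by \cite{LY172}, so $U_\alpha=0$ and $u\equiv 0$. Non-uniqueness in $\mathscr{H}(\mathbb{G})$ is then immediate: the two distinct functions $P_t u_0$ (phase $\bar{\gamma}=\infty$) and $P^\s_t u_0$ (any finite $\bar{\gamma}$) both belong to $\mathscr{H}(\mathbb{G})$ and, because the jump term at $0$ is invisible to test functions $g\in C_c^\infty(\mathbb{R})$, both solve the same weak equation \eqref{EQ4UTT} with the same initial datum $u_0$. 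The main obstacle I anticipate is the boundary-condition step: correctly matching the abstract domain descriptions of Proposition~\ref{PRO45}, phrased through $\frac{d}{d\lambda}$, with the flux conditions phrased through $a(0\pm)U_\alpha'(0\pm)$, and verifying that \eqref{EQ4AXU} coincides with $u_0\in\mathcal{D}(\mathcal{L}^\dagger)$ in each phase.
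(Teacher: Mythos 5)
Your proposal is correct and takes essentially the same route as the paper: Mosco convergence (Theorem~\ref{THM45}) combined with Proposition~\ref{PRO42} for the $L^2$-limit, identification $u_t=P^\dagger_t u_0$ with the jump term invisible to $g\in C_c^\infty(\mathbb{R})$ for the weak-solution property, the generator domains of Proposition~\ref{PRO45} together with $U_\alpha=R^\dagger_\alpha u_0$ and semigroup invariance of $\mathcal{D}(\mathcal{L}^\dagger)$ for the boundary conditions, and a Lemma~\ref{LM412}-style Laplace-transform argument for uniqueness in $\mathscr{H}(\mathbb{R})$. The only differences are cosmetic: you obtain the weak identity by differentiating $t\mapsto(P^\dagger_t u_0,g)$ rather than via the paper's Laplace-transform-plus-continuity step, and you make explicit the verification of \eqref{EQ4LNM} (via $m^*(n)=\varepsilon_n$, $\lambda^*(n)\leq \varepsilon_n/\delta$) and a concrete non-uniqueness witness in $\mathscr{H}(\mathbb{G})$, both of which the paper leaves implicit.
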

\begin{proof}
Note that $u^n_t=P^n_tu_0$, where $P^n_t$ is the semigroup of $X^{\varepsilon_n}$, and the total thermal resistance of $I_{\varepsilon_n}$ with respect to $(\EE^{\varepsilon_n},\FF^{\varepsilon_n})$ is nothing but $\bar{\gamma}(n)$. Then the existence of $u_t$ in $L^2(\mathbb{R})$ follows from Theorem~\ref{THM45} and Proposition~\ref{PRO42}. 

The case $\bar{\gamma}=0$ is clear by mimicking Lemma~\ref{LM412}. Now consider the case $0<\bar{\gamma}<\infty$. Denote 
\[
	d\lambda=\frac{1}{a(x)}dx
\]
and the Dirichlet form \eqref{EQ4FSF} with this $\lambda$ ($m$ is the Lebesgue measure) by $(\EE^\s,\FF^\s)$. Then $u_t=P^\s_tu_0$, where $P^\s_t$ is the semigroup of $X^\s$. Its resolvent is denoted by $R^\s_\alpha$. Mimic the first part of the proof of Lemma~\ref{LM412} for $(\EE^\s,\FF^\s)$ and note that \eqref{EQ4UGL} is replaced by
\[
	(u_0,g)_{L^2(\mathbb{R})}-\alpha(R^\s_\alpha u_0, g)_{L^2(\mathbb{R})}=\EE^\s(R^\s_\alpha u_0,g)=\frac{1}{2}\int_\mathbb{G} a(x)\left(R^\s_\alpha u_0\right)'(x)g'(x)dx, 
\]
because of the continuity of $g$. Then we can conclude that $u$ is a weak solution to \eqref{EQ4UTT} in $\mathscr{H}(\mathbb{G})$. Since $U_\alpha =R^\s_\alpha u_0\in \mathcal{D}(\L^\s)$, where $\L^\s$ is the generator of $(\EE^\s,\FF^\s)$ on $L^2(\mathbb{R})$, it follows from Proposition~\ref{PRO45} that $U_\alpha $ satisfies the boundary condition \eqref{EQ4AUA}. The condition \eqref{EQ4AXU} implies $u_0\in \mathcal{D}(\L^\s)$. By Hille-Yosida theorem, we have $u_t=P^\s_tu_0\in \mathcal{D}(\L^\s)$ and thus $u_t$ also satisfies the same boundary condition at $0$. The proof of the case $\bar{\gamma}=\infty$ is the same as that of $0<\bar{\gamma}<\infty$. The non-uniqueness of weak solutions in $\mathscr{H}(\mathbb{G})$ is clear, since different $\bar{\gamma}$ corresponds to different Markov process. That completes the proof. 
\end{proof}
\begin{remark}
In Corollary~\ref{COR48}, $a\equiv 1$ and $b_\varepsilon(x)=(\kappa \varepsilon)^{-\alpha}$. The three phases in Theorem~\ref{THM414} still correspond to $\alpha<-1$, $\alpha=-1$ and $\alpha>-1$ respectively. Particularly, the boundary condition \eqref{EQ4AUA} for the phase $\alpha=-1$ has been considered in \cite[Proposition~1]{L16}. 
\end{remark}

As shown in Lemma~\ref{LM412}, $u^\varepsilon_t \in H^1(\mathbb{R})$ and this indicates the flux of thermal conduction with a small normal barrier is continuous near $0$. When $\varepsilon\downarrow 0$, the continuity at $0$ still holds unless the total thermal resistance tends to $0$. Otherwise, the flux has a gap between $0-$ and $0+$, and the boundary condition \eqref{EQ4UUO} or \eqref{EQ4AUA} appears. Note that by letting $\bar{\gamma}\uparrow \infty$ in the semi-permeable case, the boundary condition \eqref{EQ4AUA} becomes the impermeable one \eqref{EQ4UUO} formally.

\bibliographystyle{abbrv}
\bibliography{stiff}

\end{document}